\def\.{\cdot}
\def\a{\alpha}
\def\b{\beta}
\def\d{{\mathrm d}}
\def\e{\varepsilon}
\def\la{\langle}
\def\ra{\rangle}
\def\n{\nabla}
\def\nt{\nabla^\tau}
\def\beq{\begin{equation}}
\def\eeq{\end{equation}}
\def\bea{\begin{eqnarray*}}
\def\eea{\end{eqnarray*}}
\def\beaa{\begin{eqnarray}}
\def\eeaa{\end{eqnarray}}
\def\ba{\begin{array}}
\def\ea{\end{array}}
\def\f{\varphi}
\def\o{\omega}
\def\e{\varepsilon}
\def \R{\mathbb{R}}
\def \ZM{\mathbb{Z}}
\def\Re{\mathrm{Re}}
\def\Im{\mathrm{Im}}
\def\Ric{\mathrm{Ric}}
\def\be{\begin{equation}}
\def\ee{\end{equation}}
\def\tr{\mathrm{tr}}
\def\Hol{\mathrm{Hol}}
\def\hol{\mathfrak{hol}}
\def\so{\mathfrak{so}}
\def\su{\mathfrak{su}}
\def\gg{\mathfrak{g}}
\def\u{\mathfrak{u}}
\def\SU{\mathrm{SU}}
\def\U{\mathrm{U}}
\def\D{\mathcal{D}}
\def\G{\mathrm{G}}
\def\H{\mathcal{H}}
\def\L{\mathcal{L}}
\def\SO{\mathrm{SO}}
\def\End{\mathrm{End}}
\def\vol{\mathrm{vol}}
\def\scal{\mathrm{scal}}
\def\Id{\mathrm{id}}
\def\T{\mathrm{\,T}}
\def\V{\mathcal{V}}
\def\Par{\mathrm{Par}}
\newtheorem{epr}{Proposition}[section]
\newtheorem{ath}[epr]{Theorem}
\newtheorem{elem}[epr]{Lemma}
\theoremstyle{definition}
\newtheorem{ede}[epr]{Definition}
\newtheorem{ere}[epr]{Remark}
\title{${\mathrm G}_2$-structures with parallel skew-symmetric torsion}
\author{Andrei Moroianu, Uwe Semmelmann}
\address{Andrei Moroianu \\ Université Paris-Saclay, CNRS,  Laboratoire de mathématiques d'Orsay, 91405 Orsay, France, and Institute of Mathematics ``Simion Stoilow'' of the Romanian Academy, 21 Calea Grivitei, 010702 Bucharest, Romania }
\email{andrei.moroianu@math.cnrs.fr}
\address{Uwe Semmelmann, Institut f\"ur Geometrie und Topologie, Fachbereich Mathematik, Universit{\"a}t Stuttgart, Pfaffenwaldring 57, 70569 Stuttgart, Germany
}
\email{uwe.semmelmann@mathematik.uni-stuttgart.de}
\date{\today}
\begin{document}

\begin{abstract} 
We classify $7$-dimensional Riemannian manifolds carrying a metric connection with parallel skew-symmetric torsion whose holonomy is contained in $\mathrm{G}_2$, up to naturally reductive homogeneous spaces and nearly parallel $\mathrm{G}_2$-structures. This extends and completes the classification initiated by Th. Friedrich in the cocalibrated case. Incidentally, we also obtain the list of $\mathrm{SU}(3)$ geometries with parallel skew-symmetric torsion, up to naturally reductive homogeneous spaces and nearly Kähler manifolds.
\end{abstract}

\subjclass[2020]{53B05, 53C25}
\keywords{Parallel skew-symmetric torsion, $\G_2$-structures, $\SU(3)$-structures, Sasaki structures, $3$-$(\alpha, \delta)$-Sasaki structures, twistor spaces}

\maketitle

\section{Introduction}

The Levi-Civita connection, as torsion-free metric connection, is the central object in Riemannian geometry. The next most natural class of connections to study is metric connections with totally skew-symmetric and parallel torsion. These connections have many nice properties, e.g. their curvature is still pair-symmetric, the second Bianchi identity
holds and the first Bianchi identity holds with an additional torsion term. Moreover, these connections have the same geodesics
as the Levi-Civita connection. An important motivation for studying
connections with skew-symmetric parallel torsion comes from the fact that 
they often arise in the presence of special geometric structures, such as nearly K\"ahler and Sasakian manifolds, $\G_2$-structures, and naturally reductive homogeneous spaces. In 
all these cases a canonical connection with parallel skew-symmetric torsion  preserving the structure exists. This connection is often better adapted 
to the special geometry and has interesting additional properties. In recent years, connections with parallel skew-symmetric torsion have attracted substantial interest in mathematics, and various papers have studied them in particular settings (e.g.  \cite{AD20}, \cite{FF25}, \cite{F07}, \cite{PR23}).

Connections with parallel skew-symmetric torsion are also important in theoretical
physics, in particular in superstring and supergravity theories. In type II superstring theory the fundamental string equations were formulated by Strominger among other things in terms of a 3-form. The assumption that this 3-form is the torsion form of a metric connection has proved  to be a successful starting point for a complete solution of the system of string equations (cf. \cite{S86}, see also \cite{FI02}). A particularly interesting dimension in string theory is
dimension $7$. Here the $\G_2$-Hull-Strominger system (or heterotic $\G_2$-system)
was extensively investigated recently (see e.g.   \cite{CGT22} or \cite{OLMS20}).

In their seminal paper \cite{CS04}, Cleyton and Swann presented a 
first classification result for Riemannian manifolds admitting a metric connection with parallel skew-symmetric torsion. Under the assumption that the torsion does not vanish and its holonomy acts irreducibly they show that the manifold is either naturally reductive locally homogeneous, or a nearly K\"ahler manifold
in dimension $6$, or a nearly parallel $\G_2$-manifold in dimension $7$.

In \cite{CMS21} we started a systematic study of geometries with parallel skew-symmetric torsion in the general case, i.e. with reducible holonomy. We showed that in this situation there exists a locally defined Riemannian submersion with totally geodesic leaves, which are naturally reductive spaces. Moreover, the base of the submersion again admits a connection with parallel skew-symmetric torsion, together with some further, somewhat
mysterious, geometric structures. We used this reduction procedure to obtain new classification results under additional assumptions. However, it turned out that there is a huge variety of possible constructions and a complete classification seems to be out of reach for the moment.

The main tool in \cite{CMS21} is the  {\it standard decomposition}
of the tangent bundle into vertical and horizontal parts. Its definition is based on properties of the holonomy algebra of the connection with parallel skew-symmetric torsion (see Sect. \ref{CMS} below). This approach was generalized in \cite{MS24}. It turns out that one can define a standard decomposition, with similar properties, associated to any Lie algebra containing the holonomy algebra and contained in the Lie algebra of the stabilizer of
the torsion form (of course the stabilizer algebra contains the
holonomy algebra). One of the main achievements in \cite{MS24} is the generalization of the classification result in \cite{CS04} to the case where any of these intermediate algebras acts irreducibly on the tangent space.
%A classification was possible with the exception of the case where
%the holonomy algebra is contained in the Lie algebra $\mathfrak{g}_2$. It is exactly this case which is settled in the present article.

In the present article, we will change the point of view and study connections with parallel skew-symmetric torsion whose holonomy group is not necessarily irreducible, but is contained in $\G_2$. Note that in terms of the celebrated Fern\'andez-Gray classification \cite{FG82}, a $\G_2$-structure is preserved by a metric connection with skew-symmetric torsion if and only if it belongs to the class $\mathcal{W}_1\oplus\mathcal{W}_3\oplus\mathcal{W}_4$, which in modern language \cite{B06} is characterized by the vanishing of the 2-form part $\tau_2$ of its intrinsic torsion \cite[Thm. 4.7]{FI02}. Note that this class is complementary to the class $\mathcal{W}_2$ of {\em closed} $\G_2$-structures.

In this situation, assuming that the torsion of this connection is parallel, a complete local classification is possible, up to naturally reductive spaces, Joyce manifolds (i.e. Riemannian manifolds with holonomy equal to $\G_2$) and nearly parallel
$\G_2$-manifolds. For these three classes of manifolds many examples are known and at least for Joyce manifolds and nearly parallel $\G_2$-manifolds a classification seems  not feasible. Naturally reductive $7$-dimensional homogeneous spaces have been recently classified by Storm \cite{S20}. However, it is not clear in which cases the holonomy algebra of the canonical homogeneous connection is contained in $\mathfrak{g}_2$.
As a byproduct of our classification in the $\G_2$-case, we also
obtain a classification for  connections with parallel skew-symmetric torsion and holonomy contained in $\SU(3)$.

A related classification result was obtained by Friedrich in \cite{F07}. He considered the special case of cocalibrated $\G_2$-manifolds, i.e. with coclosed $\G_2$-form, assuming non-abelian holonomy and also in most cases assuming the manifolds to be complete and simply-connected. 

With the new tools developed in \cite{CMS21} and \cite{MS24} at hand, we were now able to extend and complete Friedrich's classification. Our main results can be summarized as follows.

\begin{ath}\label{main}
    Let $(M^7, g, \tau, \varphi)$ be a Riemannian manifold with a $\G_2$-structure defined by the $3$-form $\varphi$ and a metric connection $\nabla^\tau$ with skew-symmetric and parallel torsion $\tau$, preserving the $\G_2$-structure, i.e. $\nabla^\tau \tau = 0$ and $\nabla^\tau \varphi = 0$. Let $R^\tau$ be the curvature of $\nabla^\tau$ and let $d: = \dim \Par(\nabla^\tau)$ denote the dimension of the space of $\nabla^\tau$-parallel vector fields. Then $(M,g,\tau,\f)$ is
    locally isomorphic to a manifold in one of the following cases:
    \begin{enumerate}
        \item $\nabla^\tau R^\tau =0$ (in which case $(M,g)$ is a naturally reductive homogeneous space);
        \item $(M,g,\f)$ is a torsion-free $\G_2$-manifold, (i.e. $\Hol(M,\nabla^g) \subset \G_2$), and $\tau = 0$;
        \item $d=1$:
        \begin{enumerate}
        \item $(M,g) = \R \times (N,g^N)$, where $N$ is a $6$-dimensional Calabi-Yau manifold, $\tau=0$ and $\f$ is given by \eqref{fcy};
            \item $(M,g) = \R \times (N,g^N)$, where $N$ is a $6$-dimensional strict nearly K\"ahler manifold and
         $\tau$ and $\varphi$ are given in \eqref{taun}--\eqref{phi};
            \item  $(M,g)$, where $S$ is a $7$-dimensional $\alpha$-Sasaki manifold which is the total space of an $S^1$-fibration over a K\"ahler-Einstein manifold of positive scalar curvature $72\alpha^2$ and $\tau$ and $\f$ are defined in \eqref{tau6}--\eqref{phi6};
            \item $(M,g)$ is the total space of an $S^1$-fibration over the twistor space of an anti-self-dual Einstein manifold in dimension $4$ of positive scalar curvature, and $\tau$ and $\varphi$ are given in \eqref{tf};
            \item $(M,g)$ is the total space of an $S^1$-fibration over a product $(K,g^K)\times (\Sigma,g^\Sigma)$ of Kähler-Einstein manifolds, $K$ of dimension $4$, and $\Sigma$ of dimension $2$, with scalar curvatures satisfying either $\scal^K+\scal^\Sigma>0$ and $\scal^\Sigma\ne 0$, or $\scal^K=\scal^\Sigma=0$ and $\tau$ and $\varphi$ are defined in \eqref{tau7}--\eqref{phi7};
        \end{enumerate}
        \item $d\ge 2 $:
        \begin{enumerate}
        \item $(M,g) = S^3 \times (K,g^K)$ or  $(M,g) = \R^3 \times (K,g^K)$, where $K$ is a $4$-dimensional hyperk\"ahler manifold and $\tau$ and $\varphi$ are given in \eqref{phi-tau3};
        \item  $(M,g) = \R^2 \times (S,g^S) $, where $S$ is a $5$-dimensional $\alpha$-Sasaki manifold which is the total space of an $S^1$-fibration over a K\"ahler-Einstein manifold of positive scalar curvature $32\alpha^2$ and $\tau$ and $\f$ are defined in \eqref{tau5} and \eqref{phi5}.
            \item $(M,g)$ has a parallel $3$-$(\alpha, \delta)$-Sasaki structure with $\delta= 2\alpha$, and $\tau$ and $\varphi$ are given in \eqref{tau4} and \eqref{phi4}.
        \end{enumerate}
        \item $d= 0$:
        \begin{enumerate}
            \item $(M^g,g)$ is a $3$-$(\alpha, \delta)$-Sasaki manifold with $\delta \neq 2\alpha$ and  $\tau$ and $\varphi$ are given in Proposition \ref{ad-thm};
            \item $(M^g,g)$ has a nearly parallel $\G_2$-structure $\f$ and $\tau = \lambda \varphi$ for some $\lambda\in\R^*$.
        \end{enumerate}
    \end{enumerate}   
\end{ath}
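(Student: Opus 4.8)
We may assume $M$ simply connected and argue locally. If $\nabla^\tau R^\tau=0$ we are in case (1), and if $\tau=0$ in case (2); so assume from now on $\nabla^\tau R^\tau\neq 0$ and $\tau\neq 0$. The plan is to combine the submersion machinery of \cite{CMS21} and \cite{MS24} with the constraints imposed by the $\G_2$-structure through $\G_2$-representation theory, organising the discussion by the integer $d=\dim\Par(\nabla^\tau)$, which equals the dimension of the maximal $\nabla^\tau$-parallel distribution on which the holonomy acts trivially.

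The first step uses $\varphi$ directly. Since $\nabla^\tau\varphi=0$, the $\G_2$-splitting $\Lambda^3T^*M=\Lambda^3_1\oplus\Lambda^3_7\oplus\Lambda^3_{27}$ is $\nabla^\tau$-parallel, hence so is each summand of $\tau=\tau_1+\tau_7+\tau_{27}$. Therefore $\tau_1=\lambda\varphi$ with $\lambda$ a locally constant function; $\tau_7$ corresponds to a $\nabla^\tau$-parallel vector field $\xi$; and $\tau_{27}$ corresponds, via $\Lambda^3_{27}\cong\Sym^2_0 TM$, to a $\nabla^\tau$-parallel traceless symmetric endomorphism $S$ of $TM$. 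From $g(\nabla^g_X\xi,Y)=-\tfrac12\tau(X,\xi,Y)$, which is antisymmetric in $(X,Y)$, the field $\xi$ is Killing of constant length; in particular $\tau_7\neq 0$ forces $d\geq1$. If $\tau_7=\tau_{27}=0$ then $\tau=\lambda\varphi$ with $\lambda\in\R^*$, and since $\nabla^\tau$ is then the characteristic connection of $(g,\varphi)$ (the unique metric connection with totally skew torsion preserving $\varphi$, by \cite{FI02}), comparison with the Friedrich--Ivanov formula for the characteristic torsion forces the $\W_3$- and $\W_4$-parts of the intrinsic torsion to vanish, so $(M,g,\varphi)$ is nearly parallel $\G_2$: case (5b). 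From now on $\tau_7\neq 0$ or $\tau_{27}\neq 0$, so the holonomy algebra is a proper subalgebra $\hh\subsetneq\mathfrak{g}_2$ stabilising the nonzero object(s) among $\xi$ and $S$.

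The core of the argument is to enumerate the subalgebras $\hh\subseteq\mathfrak{g}_2$ arising in this way, together with the decomposition of $\R^7$ under $\hh$, sorted by $d=\dim(\R^7)^{\hh}$. The relevant families are: $\hh\subseteq\mathfrak{su}(3)$, where $\R^7=\R\oplus\C^3$ and the $\nabla^\tau$-parallel line reduces the classification to that of $6$-dimensional $\SU(3)$-geometries with parallel skew torsion --- which is handled by the same recursive procedure and yields the ``incidental'' $\SU(3)$-list (Calabi--Yau, strict nearly K\"ahler, $\alpha$-Sasaki over K\"ahler--Einstein, twistor spaces of anti-self-dual Einstein $4$-manifolds, products of K\"ahler--Einstein manifolds); and $\hh$ contained in the maximal $\mathfrak{so}(4)=\mathfrak{sp}(1)\oplus\mathfrak{sp}(1)\subset\mathfrak{g}_2$ acting on $\R^7=\R^3\oplus\R^4$, whose various subalgebras account for the hyperk\"ahler-type geometry of case (4a) and the $3$-$(\alpha,\delta)$-Sasaki geometries of cases (4c) and (5a); the remaining tori-, $\mathfrak{u}(1)$- and $\mathfrak{u}(2)$-type subalgebras of these produce the residual $\alpha$-Sasaki and product families (3c), (3e), (4b). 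For each admissible $\hh$ with $d\geq1$ one then argues uniformly: a $\nabla^\tau$-parallel vector field $\xi_0$ with $\xi_0\lr\tau=0$ is $\nabla^g$-parallel, so the de Rham theorem splits off a flat factor, $M=\R^k\times N$ locally (with an $S^3\times K$ or $\R^3\times K$ appearing when a $3$-dimensional parallel distribution carries a left-invariant $\SU(2)$- or abelian structure, as in (4a)); if instead $\xi_0\lr\tau\neq 0$, then $\xi_0$ generates a locally free isometric $S^1$- or $\R$-action, and by the reduction theorems of \cite{CMS21} and \cite{MS24} the quotient $B$ inherits a metric connection with parallel skew torsion together with the pertinent K\"ahler/quaternionic data, $M$ being recovered as the associated circle bundle; identifying $B$ with the appropriate lower-dimensional geometry from the list above then gives precisely the subfamilies (3a)--(3e) and (4a)--(4c). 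When $d=0$ the holonomy has no trivial summand and only $3$-$(\alpha,\delta)$-Sasaki manifolds with $\delta\neq2\alpha$ (case 5a) and nearly parallel $\G_2$ manifolds (case 5b) survive; the relation $\delta=2\alpha$ is precisely what makes the $3$-$(\alpha,\delta)$-Sasaki structure $\nabla^\tau$-parallel and thereby produces the extra parallel vector fields of case (4c).

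I expect the main obstacle to be the bookkeeping in the intermediate holonomy range: proving that the list of subalgebras of $\mathfrak{g}_2$ compatible with a non-exact, non-naturally-reductive situation is complete, and attaching to each admissible $\hh$ exactly one family, while excluding the configurations that are incompatible with $\nabla^\tau\varphi=0$ or which force $\nabla^\tau R^\tau=0$ --- certain eigenvalue patterns of the parallel endomorphism $S$, and certain combinations of $\xi$ and $S$, must be ruled out this way. Equally delicate is the recognition of the $3$-$(\alpha,\delta)$-Sasaki structures: when $\hh\subseteq\mathfrak{sp}(1)\oplus\mathfrak{sp}(1)$ one has to read off from the $\R^3\oplus\R^4$-splitting the triple of structure tensors, identify the base as self-dual Einstein / quaternion-K\"ahler, determine the parameters $(\alpha,\delta)$, and separate the cases $\delta=2\alpha$ and $\delta\neq2\alpha$. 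Two smaller points also require attention: verifying that a possibly non-reductive holonomy algebra either still fits the above scheme or lands in case (1) (here one may use Storm's classification \cite{S20} on the naturally reductive side), and completing the $\G_2$-specific implication ``$\tau=\lambda\varphi\Rightarrow$ nearly parallel'', the one point where the characteristic-connection calculus for $\G_2$-structures is used head-on; along the way one also writes out $\varphi$ and $\tau$ explicitly on each model, confirming that all listed families genuinely occur.
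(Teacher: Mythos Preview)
Your strategy differs from the paper's in organisation and in one key technical step. The paper does not decompose $\tau$ into its $\G_2$-types $\tau_1+\tau_7+\tau_{27}$ or work with the parallel symmetric endomorphism $S$; instead it uses the standard decomposition $\T M=\V\oplus\H$ from \cite{CMS21} directly. For $d=1$ the decisive fact is Lemma~\ref{hrep}: the horizontal summand $\H$ is $J$-invariant (for the $\SU(3)$-structure on $\xi^\perp$), $\hol$-irreducible, and has dimension $4$ or $6$. The two subcases $\dim\H=6$ (forcing $\hol=\su(3)$) and $\dim\H=4$ (forcing $\hol=\Lambda^-\H\oplus\uu(1)$) lead straight to the families (3a)--(3e) without a separate subalgebra enumeration. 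For $d\ge2$ the paper first notes that two orthonormal $\nt$-parallel fields $\xi_1,\xi_2$ automatically yield a third, $\xi_3:=\f(\xi_1,\xi_2)$, so $\V$ is a calibrated $3$-plane; writing $\tau=a\,\xi_1\wedge\xi_2\wedge\xi_3+\sum_i\xi_i\wedge\gamma_i$ with $\gamma_i\in\Omega^+(\H)$, the relation $[\gamma_i,\gamma_j]=a\gamma_k$ (Lemma~\ref{com}) makes the span of the $\gamma_i$ a Lie subalgebra of $\Lambda^+\H$, and its dimension $0$, $1$, or $3$ gives exactly (4a), (4b), (4c). Your approach via the eigenspaces of $S$ could in principle recover these splittings, but the paper's route is more direct and bypasses the eigenvalue bookkeeping you anticipate.

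The genuine gap in your outline is the $d=0$ case with $\hol=\su(2)\oplus\su_c(2)$. Here one has the $\nt$-parallel splitting $\T M=\V^3\oplus\H^4$, but since $d=0$ there are \emph{no} $\nt$-parallel sections of $\V$, and ``reading off the triple of structure tensors from the $\R^3\oplus\R^4$-splitting'' is exactly the difficulty: the $3$-$(\alpha,\delta)$-Sasaki Reeb fields $\xi_i$ are specific Killing fields tangent to $\V$ that are not parallel for $\nt$ when $\delta\ne2\alpha$. The paper's key device is to modify the connection to $\nabla^{\tilde\tau}:=\nt+\lambda\,\vol^\V$ and compute that for the particular value $\lambda=-2(x+2y)$ (where $\tau=x\,\vol^\V+y\sum_i\xi_i\wedge\beta_i$) the restriction of $\nabla^{\tilde\tau}$ to $\V$ is \emph{flat}; a $\nabla^{\tilde\tau}$-parallel orthonormal frame of $\V$ then supplies the Reeb fields, and $d\xi_i=2\tilde\tau_{\xi_i}$ identifies $(\alpha,\delta)=(y,\,x+4y)$. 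Without this trick (or an equivalent) your plan does not explain how to produce the Sasaki data, and the paper singles this out as the main new ingredient in Section~\ref{xxx}. Two smaller corrections: the holonomy algebra, being contained in the compact $\gg_2$, is automatically reductive, so there is no ``non-reductive'' case to worry about; and Storm's classification is not used anywhere in the argument.
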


\begin{ere}
Note that cases (5)(a) and (5)(b) have an overlap.
Indeed, if $\delta = 5\alpha$, then the $3$-$(\alpha, \delta)$-Sasaki structure defines a nearly parallel $\G_2$-structure by \eqref{tor-ad} with $\tau = \alpha \varphi$.
Similarly, case (2), which corresponds to 
torsion-free $\G_2$-manifolds, i.e. $\tau=0$, has an overlap with (3)(a) and with the second case of (4)(a).

It is easy to compute the holonomy algebra $\hol$ of $\nabla^\tau$ in each of the cases (3)--(5), assuming that $(M,g)$ is not naturally reductive, i.e. not contained in case (1). In cases (3)(a)--(3)(b) one has $\hol=\su(3)\subset\gg_2$, and in cases (3)(c)--(3)(e) one has $\hol=\mathfrak{s}(\u(1)\oplus\u(2))\subset\su(3)\subset\gg_2$. In case (4) one has $\hol = \su(2)\subset\su(3)\subset\gg_2$, in case (5)(a) one has $\hol = \su(2) \oplus \su_c(2)\subset\gg_2$, whereas in case (5)(c), $\hol=\gg_2$.
\end{ere}

The proof of Theorem \ref{main} will be given in Sections \ref{s4}--\ref{xxx}. Note that if a 6-dimensional manifold $(N,g)$ has a metric connection with parallel skew-symmetric torsion whose holonomy is contained in $\SU(3)$, then the induced connection on the Riemannian product $\R\times (N,g)$ has holonomy contained in $\mathrm{SU}(3)\subset\G_2$, and going through the possible cases in Theorem \ref{main} we obtain at once:

\begin{ath}\label{main2}
    Let $(N^6, g)$ be a Riemannian manifold with an $\SU(3)$-structure $(\omega, \psi)$ and a metric connection $\nabla^\sigma$ with skew-symmetric and parallel torsion $\sigma$, preserving the $\SU(3)$-structure, i.e. $\nabla^\sigma\sigma=0$, $\nabla^\sigma \omega = 0$ and $\nabla^\sigma \psi = 0$. Then $(N,g)$ is locally isometric to a manifold in one of the following cases:
    \begin{enumerate}
        \item $(N,g)$ is a naturally reductive homogeneous space;
               \item $(N,g,\omega,\psi)$ is a $6$-dimensional Calabi-Yau manifold, and $\sigma=0$;
            \item $(N,g,\omega)$ is a $6$-dimensional strict nearly K\"ahler manifold, $\sigma=-\frac16*d\omega$, and $\psi$ is a linear combination of $\sigma$ and $*\sigma$.
        \item  $(N,g) = \R \times (S,g^S) $, where $(S,g^S,\xi,\Phi)$ is a $5$-dimensional $\alpha$-Sasaki manifold which is the total space of an $S^1$-fibration over a K\"ahler-Einstein manifold of positive scalar curvature $32\alpha^2$, $\omega=dt\wedge\xi+\Phi^\flat$ and $\sigma=dt\wedge \alpha\Phi$.
    \end{enumerate}   
\end{ath}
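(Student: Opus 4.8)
The plan is to deduce the statement from Theorem~\ref{main} by ``suspending'' the $\SU(3)$-geometry on $N$ to a $\G_2$-geometry on $\R\times N$. Concretely, I would set $M:=\R\times N$, equip it with the product metric $\bar g=dt^2+g$, let $\nabla^\tau$ be the product of the trivial connection on the $\R$-factor with $\nabla^\sigma$, take as torsion the pull-back $\tau$ of $\sigma$, and let $\varphi:=dt\wedge\omega+\psi$ be the $\G_2$-form canonically associated with the $\SU(3)$-structure $(\omega,\psi)$. Since $dt$, $\omega$ and $\psi$ are all $\nabla^\tau$-parallel, so is $\varphi$; the connection $\nabla^\tau$ is metric with parallel skew-symmetric torsion $\tau$; and $\Hol(\nabla^\tau)=\Hol(\nabla^\sigma)\subset\SU(3)\subset\G_2$. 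Hence $(M,\bar g,\tau,\varphi)$ satisfies the hypotheses of Theorem~\ref{main}, and since $\partial_t$ is $\nabla^\tau$-parallel we have $d:=\dim\Par(\nabla^\tau)\ge1$.

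I would then run through the cases of Theorem~\ref{main}. Case~(5) is excluded at once because there $d=0$. In each of the remaining cases, the model $X$ to which $M$ is locally isomorphic inherits a Levi-Civita-parallel unit vector field (the image of $\partial_t$, which is Levi-Civita-parallel since $\bar g$ is a product metric), so by the local de~Rham decomposition theorem $X$ is locally isometric to $\R\times X'$; by uniqueness of the de~Rham factors, $N$ is locally isometric to the complementary factor $X'$, which carries the induced $\SU(3)$-structure with parallel skew-symmetric torsion. For cases~(1) and (2), the conditions $\nabla^\tau R^\tau=0$, resp.\ $\tau=0$, restrict to $\nabla^\sigma R^\sigma=0$, resp.\ $\sigma=0$, on $N$, so $N$ is locally naturally reductive, resp.\ has $\Hol(N,\nabla^g)\subset\SU(3)$ (i.e.\ is Calabi-Yau), giving cases~(1), resp.\ (2), of Theorem~\ref{main2}; case~(3)(a) likewise gives case~(2). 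In case~(3)(b), $X'$ is a strict nearly Kähler $6$-manifold, and reading off the explicit forms attached to that case one finds that the characteristic torsion is $\sigma=-\tfrac16*d\omega$ and that $\psi$ is a linear combination of $\sigma$ and $*\sigma$, giving case~(3).

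For the cases with $d\ge2$: in case~(4)(a), $X=S^3\times K^4$ or $\R^3\times K^4$ with $K$ hyperkähler; since neither $S^3$ nor a non-flat hyperkähler $4$-manifold has a parallel vector field, $\partial_t$ must lie in a flat factor, which forces $K=\R^4$ in the first sub-case, so $X'$ is either $S^3\times\R^3$ (a Lie group, hence locally naturally reductive, case~(1)) or $\R^2\times K^4$ (Calabi-Yau, case~(2)). In case~(4)(b), $X=\R^2\times S^5$, so $X'=\R\times S^5$ with $S^5$ the $5$-dimensional $\alpha$-Sasaki $S^1$-fibration over a Kähler-Einstein $4$-manifold of scalar curvature $32\alpha^2$, and the induced data are $\omega=dt\wedge\xi+\Phi^\flat$ and $\sigma=dt\wedge\alpha\Phi$, giving case~(4). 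Finally, the genuinely $7$-dimensional models — the $\alpha$-Sasaki and twistor $S^1$-fibrations of (3)(c)--(d), the fibration over a product of Kähler-Einstein factors of (3)(e), and the $3$-$(\alpha,\delta)$-Sasaki manifolds of (4)(c) — must be ruled out: one checks that such a manifold either has no Levi-Civita-parallel vector field at all (hence is not locally a Riemannian product $\R\times N^6$), or splits off an $\R$ factor only from a $6$-manifold whose Levi-Civita holonomy is not contained in $\SU(3)$ (for instance because a Kähler-Einstein factor with non-zero scalar curvature contributes a holonomy factor outside $\SU(3)$), so that the hypothesis on $N$ in Theorem~\ref{main2} cannot be met.

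I expect the bulk of the work to lie in this last verification, together with the bookkeeping in cases~(3)(b) and (4)(b): translating the explicit $\G_2$-forms and torsion $3$-forms of those cases of Theorem~\ref{main} into the $\SU(3)$-data $(\omega,\psi,\sigma)$ appearing in Theorem~\ref{main2}. By contrast the suspension construction of the first paragraph, and the fact that the naturally reductive property and the Calabi-Yau condition descend to the de~Rham factor $X'$, are routine.
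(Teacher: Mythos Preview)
Your proposal takes exactly the approach the paper uses: suspend the $\SU(3)$-geometry on $N$ to a $\G_2$-geometry on $M=\R\times N$ via $\varphi=dt\wedge\omega+\psi$, apply Theorem~\ref{main}, and run through the cases. The paper's own argument is a single sentence (``going through the possible cases in Theorem~\ref{main} we obtain at once''), so your more detailed case-by-case outline is a genuine elaboration of what the paper leaves implicit.

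One simplification is available for the step you flag as the bulk of the work, namely ruling out cases (3)(c)--(e) and (4)(c). Rather than checking directly whether each model admits a Levi-Civita-parallel vector field (which is delicate, e.g.\ in (3)(e) when both K\"ahler--Einstein factors are Ricci-flat), use that $\partial_t$ is \emph{simultaneously} $\nabla^g$-parallel and $\nabla^\tau$-parallel, hence $\partial_t\lrcorner\tau=0$. In each of (3)(c)--(e) the unique $\nabla^\tau$-parallel direction is $\xi$, and one reads off from \eqref{tau6}, \eqref{tf}, \eqref{tau7} that $\xi\lrcorner\tau=\gamma\ne0$; contradiction. In (4)(c) the $\nabla^\tau$-parallel vectors span $\V$, and for $V=\sum c_i\xi_i$ one has $V\lrcorner\tau=a\sum c_i\,\xi_j\wedge\xi_k+\sum c_i\gamma_i$ with $\gamma_1,\gamma_2,\gamma_3$ linearly independent, so $V\lrcorner\tau=0$ forces $V=0$. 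The same device handles (4)(a) in the $S^3\times K$ sub-case with $K$ non-flat: there $\Par(\nabla^\tau)=\V$ and $\xi_i\lrcorner\tau=a\,\xi_j\wedge\xi_k\ne0$, so again no candidate for $\partial_t$ exists; this is quicker than invoking holonomy of hyperk\"ahler $4$-manifolds. Your second alternative for these cases (``splits off an $\R$ factor only from a $6$-manifold whose Levi-Civita holonomy is not contained in $\SU(3)$'') is not the right criterion, since the hypothesis of Theorem~\ref{main2} constrains $\Hol(\nabla^\sigma)$, not $\Hol(\nabla^g)$.
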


Note that in case (4), the $3$-form $\psi$ of the $\SU(3)$-structure is harder to describe. From the considerations in \S\ref{line}, it follows that $\psi$ can be written as $\psi=\xi\wedge\beta_1+dt\wedge\beta_2$, where $\beta_1$ and $\beta_2$ are orthogonal $\nabla^\sigma$-parallel sections of $\Lambda^+(\xi^\perp)\subset\Lambda^2\T S$ of square norm 2. 

The paper is structured as follows.
We start in  Section 2 with collecting a few basic definitions and facts used throughout the article. We discuss in particular $\SU(3)$- and $\G_2$-structures on Euclidean vector spaces, as well as geometric structures on manifolds, such as
Sasaki and nearly K\"ahler structures. At the end of the Section 2 we recall the definition of the standard decomposition and of the corresponding standard submersion for manifolds with a metric connection with parallel skew-symmetric torsion introduced in \cite{CMS21}.

In Section 3 we study the locally defined Riemannian submersion induced by a
$\nabla^\tau$-parallel vector field, where $\nabla^\tau$ is a connection with
skew-symmetric torsion. In particular, we give  in Lemma \ref{ntg} conditions for a $\nabla^\tau$-parallel form on the total space to be projectable to a form on the base which is parallel with respect to an induced connection with skew-symmetric torsion.
In \eqref{rpb} we derive a useful relation between the curvature tensors of these connections. 

In Section 4 we start with the main topic of our article, the classification 
of $7$-dimensional Riemannian manifolds admitting a connection $\nabla^\tau$ 
with parallel skew-symmetric torsion whose holonomy is contained in $\G_2$.  
We divide our study in three cases according to the dimension  of the space
of $\nabla^\tau$-parallel vector fields, denoted by $\Par(\nabla^\tau)$. In the case  $\dim (\Par(\nabla^\tau)) = 1$, the starting point
is the observation in Lemma \ref{hrep} that in this situation the horizontal part $\H$ can have $\hol$-irreducible summands only of dimensions $6$ or $4$. The
corresponding subcases are treated separately and lead to the first classes of examples. 

In Section 5 we study the case $\dim \Par(\nabla^\tau) \ge 2$. In this case one has an orthonormal frame of three $\nabla^\tau$-parallel vector fields spanning a calibrated $3$-plane $\V$, which can be used, together with the torsion form, to define three self-dual forms on the horizontal space $\H:=\V^\perp$. It turns out that these forms (denoted by $\gamma_i$) define a Lie subalgebra of $\Lambda^+\H$, and according to the possible dimensions, $0,\ 1,$ and $3$ of this subalgebra, the solutions to our problem correspond to Riemannian products of 3-dimensional space forms with hyperkähler 4-manifolds, to products of $\R^2$ with Sasakian $S^1$-bundles over Kähler-Einstein 4-manifolds, or parallel $3$-$(\delta, \alpha)$-Sasaki manifolds.

In the final Section 6 we discuss the case  $\dim \Par(\nabla^\tau) = 0$.
Here we use the classification of possible holonomy algebras of $\nabla^\tau$ given
by Friedrich in \cite{F07}. There are five Lie algebras contained in $\mathfrak{g}_2$ acting on $\R^7$ without trivial summands. Four of them
directly lead to naturally reductive locally homogeneous spaces, respectively to  nearly
parallel $\G_2$-manifolds. The remaining case $\hol = \su(2) \oplus \su_c(2)$ leads to further interesting examples 
defined on $3$-$(\delta, \alpha)$-Sasaki manifolds, as described in
Proposition \ref{ad-thm}. Our result in this case can be interpreted as the converse to the construction by Agricola and Dileo \cite[Thm. 4.5.1]{AD20} of the characteristic connection on $3$-$(\delta, \alpha)$-Sasaki manifolds in dimension 7. The key argument here was to use a modified connection which turned out to be flat on the vertical distribution, thus allowing us to recover the Killing vector fields defining the $3$-$(\delta, \alpha)$-Sasaki structure.

%%%%%%%%%%%%%%%%%%%%%%%%%%

 {\sc Acknowledgments.} This research was supported by the Oberwolfach Research Fellows program at the Mathematisches Forschungsinstitut Oberwolfach (MFO). We are grateful to MFO for its hospitality and for providing a stimulating research environment. A.M. was partly supported by the PNRR-III-C9-2023-I8 grant CF 149/31.07.2023 {\em Conformal Aspects of Geometry and Dynamics}. We would also like to thank the anonymous referee for his/her attentive reading of the manuscript and helpful remarks.

\bigskip

\section{Preliminaries}

\subsection{Generalities on multilinear algebra}

Let $(E,\la,\cdot,\cdot\ra)$ be an $n$-dimensional Euclidean space. We identify $E$ with $E^*$ and $\Lambda^2 E\simeq\Lambda^2 E^*$ with the space $\End^-(E)$ of skew-symmetric endomorphisms of $E$ by means of the scalar product. For example, if $X,Y\in E$, then $X\wedge Y$ can be seen as a skew-symmetric endomorphisms of $E$ by the formula
$$(X\wedge Y)(Z):=\la X,Z\ra Y-\la Y,Z \ra X\ .$$

The exterior algebra $\Lambda^*E$ has a unique scalar product extending $\la\cdot,\cdot\ra$ and such that the interior and exterior products with vectors $X\lrcorner$ and $X\wedge$ are adjoint to each other. Note that this scalar product does not correspond to the usual extension of the scalar product to the tensor algebra via the above identification. For instance, if $X,Y\in E$ are two orthogonal vectors, then 
$$|X\wedge Y|^2_{\Lambda^2E}=|X|^2|Y|^2=\frac12|X\wedge Y|^2_{\End^-(E)}\ .$$

Every skew-symmetric endomorphism $A\in\End^-(E)$ extends uniquely as a derivation of the tensor bundle, denoted $A_*$, commuting with the isomorphism $E\simeq E^*$ defined by the scalar product. On the exterior algebra $\Lambda^*E$ one has the convenient expression
\begin{equation}
    A_*=\sum_{i=1}^n A(e_i)\wedge e_i\lrcorner\ ,
\end{equation}
where $\{e_i\}$ is any orthonormal basis of $E$. If $A,B\in\End^-E$, then $A_*B=[A,B]$, where $[A,B]:=AB-BA$ denotes the commutator of endomorphisms.

Other useful formulas which will be needed below are
\begin{equation}\label{degree}
    \sum_{i=1}^n e_i\wedge (e_i\lrcorner \alpha)=p\alpha,\qquad  \sum_{i=1}^n e_i\lrcorner  (e_i\wedge  \alpha)=(n-p)\alpha,\qquad\forall \alpha\in\Lambda^pE\ .
\end{equation}

\subsection{$\SU(3)$-structures}

Let $E$ be a $2m$-dimensional real vector space. A $\U(m)$- (or Hermitian) structure on $E$ is a scalar product $ \la \cdot,\cdot\ra$ together with an orthogonal endomorphism $J$ satisfying $J^2=-\Id_E$. Then $J$ is skew-symmetric and the bilinear form $\omega:=\la J\cdot,\cdot\ra$ is skew-symmetric. 

The extension $J_*$ of $J$ to the exterior bundle $\Lambda^*E$ is skew-symmetric as well. The square $(J_*)^2$ is thus symmetric, and for every $p\ge 0$ its spectrum on $\Lambda^pE$ is given by 
$$\{-(p-2l)^2,\ |\ l\in\ZM\cap[0,\frac p2]\}\ .$$ 
The eigenspace of the restriction of $(J_*)^2$ to $\Lambda^pE$ corresponding to the eigenvalue $-(p-2l)^2$ is denoted by $\Lambda^{(p-l,l)+(l,p-l)}E$, or simply by $\Lambda^{(l,l)}E$ if $p=2l$, so for every $p$ we have the orthogonal direct sum decomposition
$$\Lambda^pE=\bigoplus_{l=0}^{ \lfloor \tfrac{p}{2} \rfloor}\Lambda^{(p-l,l)+(l,p-l)}E\ .$$
The skew-symmetric endomorphisms corresponding to $\Lambda^{(1,1)}E$ and $\Lambda^{(2,0)+(0,2)}E$ via the above identifications are exactly those commuting, respectively anti-commuting, with $J$. The Hodge duality is the isomorphism $*:\Lambda^pE\to\Lambda^{m-p}E$ defined by the volume form $\frac1{m!}\omega^m$. The metric adjoint of $\omega\wedge: \Lambda^pE\to \Lambda^{p+2}E$ is denoted $\Lambda:\Lambda^{p+2}E\to \Lambda^pE$. The kernel of its restriction to $\Lambda^{(k,l)+(l,k)}E$ will be denoted by $\Lambda^{(k,l)+(l,k)}_0E$. 

An $\SU(m)$-structure on $E$ is a Hermitian structure as above, together with an element $\psi\in \Lambda^{(m,0)+(0,m)}E$ satisfying $|\psi|^2_{\Lambda^mE}=2^{m-1}$. The terminology comes from the fact that the subgroup of $\mathrm{GL}(E)$ preserving the structure $(\la\cdot,\cdot\ra,J,\omega,\psi)$ is isomorphic to $\SU(m)$.

Of particular interest for us will be the case $m=3$. 
\begin{elem}\label{op}
    For every $\SU(3)$-structure $(\la\cdot,\cdot\ra,J,\omega,\psi)$ on $E$ there exists an oriented orthonormal basis $\{e_i\}$ of $E$ such that 
    $$\o=e^{12}+e^{34}+e^{56},\qquad\psi=e^{135}-e^{146}-e^{236}-e^{245}\ ,$$
    where $\{e^i\}$ denotes the dual basis and we use the standard notation $e^{ij}:=e^i\wedge e^j$ and $e^{ijk}:=e^i\wedge e^j\wedge e^k$.
\end{elem}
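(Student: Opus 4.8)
\textbf{Proof proposal for Lemma~\ref{op}.}
The plan is to exhibit the claimed normal form by choosing an adapted orthonormal basis step by step, using only linear algebra and the defining properties of the $\SU(3)$-structure. First I would produce an orthonormal basis in which the Hermitian structure has the standard shape: since $J$ is orthogonal with $J^2=-\Id$, a standard induction picks a unit vector $e_1$, sets $e_2:=Je_1$, passes to the $J$-invariant subspace $\{e_1,e_2\}^\perp$, and repeats, yielding $\{e_1,\dots,e_6\}$ with $Je_1=e_2$, $Je_3=e_4$, $Je_5=e_6$, hence $\omega=\la J\cdot,\cdot\ra=e^{12}+e^{34}+e^{56}$ with the standard orientation. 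This handles everything except the positioning of $\psi$, which is the real content.

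Next I would analyze $\psi\in\Lambda^{(3,0)+(0,3)}E$ of square norm $|\psi|^2_{\Lambda^3E}=2^{3-1}=4$. The space $\Lambda^{(3,0)+(0,3)}E$ is $2$-dimensional and carries a natural action of $J_*$, which on it squares to $-9\Id$ but — after normalization — behaves like a complex structure with the two-form pair $(\psi,J_*\psi/3)$ spanning it; equivalently one may complexify and write $\psi=\Re(\Theta)$ for a suitable $(3,0)$-form $\Theta$. The point is that the residual freedom in the basis is exactly the isotropy group $\U(3)$ of $(\la\cdot,\cdot\ra,J,\omega)$, and $\U(3)$ acts on the unit circle of $\Lambda^{(3,0)+(0,3)}E$ (i.e. on the phase of $\Theta$) through the determinant $\U(3)\to\U(1)$, which is surjective. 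Hence by composing the basis change constructed above with an appropriate element of $\U(3)$ — concretely, a rotation in one of the $J$-invariant $2$-planes, say replacing $(e_1,e_2)$ by $(\cos t\, e_1+\sin t\, e_2,-\sin t\, e_1+\cos t\, e_2)$ — one can rotate $\psi$ to any prescribed unit-phase representative. It remains only to check that the specific $3$-form $\psi_0:=e^{135}-e^{146}-e^{236}-e^{245}$ lies in $\Lambda^{(3,0)+(0,3)}E$ and has $|\psi_0|^2_{\Lambda^3E}=4$: the norm is immediate since the four monomials are pairwise orthogonal unit vectors in $\Lambda^3E$ (each $e^{ijk}$ has norm $1$), and membership in $\Lambda^{(3,0)+(0,3)}E$ is verified by a direct computation of $(J_*)^2\psi_0=-9\psi_0$ using $J_*=\sum J(e_i)\wedge e_i\lrcorner$ together with $\Lambda\psi_0=0$ (it has no $\omega$-component).

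I expect the main obstacle to be the bookkeeping in the phase-normalization step: one must verify carefully that the $\U(3)$-action on the two-dimensional space $\Lambda^{(3,0)+(0,3)}E$ is transitive on the circle of norm-$2\sqrt{2}/\sqrt{2}$... —more precisely, that $\mathrm{SU}(3)$ fixes $\psi_0$ while the full $\U(3)$ rotates its phase, so that every admissible $\psi$ is $\U(3)$-equivalent to $\psi_0$. This is where sign conventions and the precise relation between $J_*$ on $\Lambda^3$ and the decomposition into $(3,0)$, $(2,1)$, $(1,2)$, $(0,3)$ pieces must be pinned down. An alternative, more computational route avoids this representation-theoretic argument entirely: write the general element of $\Lambda^{(3,0)+(0,3)}E$ in the $\omega$-standard basis as $a(e^{135}-e^{146}-e^{236}-e^{245})+b(e^{136}+e^{145}+e^{235}-e^{246})$ with $4(a^2+b^2)=4$, and then choose the rotation angle $t$ in one $J$-plane so that the transformed coefficients become $(a,b)=(1,0)$. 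Either way the lemma follows; I would present whichever version keeps the sign conventions cleanest.
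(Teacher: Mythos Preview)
Your proposal is correct and follows essentially the same approach as the paper: put $\omega$ in standard form, write $\psi=\Re[(x-iy)\Psi]$ with $x^2+y^2=1$ where $\Psi=(e^1+ie^2)\wedge(e^3+ie^4)\wedge(e^5+ie^6)$, and then rotate within the $J$-planes to absorb the phase. The only cosmetic difference is that the paper rotates simultaneously by the same angle $\theta$ (chosen so that $e^{3i\theta}=x-iy$) in all three $J$-planes rather than in a single one, which avoids singling out a plane at the cost of introducing a cube root.
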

\begin{proof}
    It is clear that one can find an orthonormal basis $\{f_i\}$ of $E$ such that $\omega=f^{12}+f^{34}+f^{56}$. This basis is automatically oriented with respect to the orientation defined by $\omega^3$. Since $\Lambda^{(3,0)+(0,3)}E$ is spanned by the real and imaginary  parts of 
    $$\Psi:=(f^1+if^2)\wedge(f^3+if^4)\wedge(f^5+if^6)\ ,$$  
    there exist $x,y\in\R$ such that $\psi=x\Re(\Psi)+y\Im(\Psi)=\Re[(x-iy)\Psi]$. As $|\psi|^2=|\Re(\Psi)|^2=|\Im(\Psi)|^2$, we have $x^2+y^2=1$. If $z=e^{i\theta}$ denotes a complex number such that $z^3=x-iy$, then $\psi=\Re[z(f^1+if^2)\wedge z(f^3+if^4)\wedge z(f^5+if^6)]$. Thus $\o$ and $\psi$ have the desired form with respect to the basis obtained from $\{f_i\}$ by a rotation of angle $\theta$ in the 2-planes generated by $f_{2j-1},f_{2j}$, for $j=1,2,3$.    
\end{proof}

We list below some standard formulas and facts intensively used all over the paper.

\begin{elem}\label{gen}
    If $(\la\cdot,\cdot\ra,J,\omega,\psi)$ is an $\SU(3)$-structure on a $6$-dimensional real vector space $E$, then the following hold:
    \begin{enumerate}
    \item  $ J_*\sigma=3(*\sigma),\  J_*(*\sigma)=-3\sigma$, and $JX\lrcorner \sigma=-X\lrcorner(*\sigma),\ X\lrcorner \sigma=JX\lrcorner(*\sigma)$ for every $X\in E$ and $\sigma\in\Lambda^{(3,0)+(0,3)}E$;
    \item $E$ is irreducible as $\SU(3)$-representation, and the decompositions of $\Lambda^2E$ and $\Lambda^3E$ in irreducible $\SU(3)$-summands are $\Lambda^2E=\Lambda^{(2,0)+(0,2)}E\oplus \Lambda^{(1,1)}_0E\oplus \R\omega$ and $\Lambda^3E=\R\psi\oplus\R (*\psi)\oplus \Lambda^{(2,1)+(1,2)}_0E\oplus (\omega\wedge E)$;
        \item The map $E\ni X\mapsto X\lrcorner\psi\in \Lambda^{(2,0)+(0,2)}E$ is an isomorphism of $\SU(3)$-representations;
        \item If $\alpha\in\Lambda^{(1,1)}_0E$ then $\alpha_*\psi=\alpha_*(*\psi)=0$.
    \end{enumerate}
\end{elem}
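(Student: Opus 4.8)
The plan is to work throughout in the adapted oriented orthonormal basis $\{e_i\}$ furnished by Lemma \ref{op}, in which $\omega=e^{12}+e^{34}+e^{56}$, $\psi=e^{135}-e^{146}-e^{236}-e^{245}$ and the volume form is $\tfrac1{3!}\omega^3=e^{123456}$ (so that $Je_{2j-1}=e_{2j}$), and to treat the four assertions by a mixture of explicit computation in this basis and elementary $\SU(3)$-representation theory.

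For (1), I would first compute the Hodge dual directly, obtaining $*\psi=e^{136}+e^{145}+e^{235}-e^{246}$; since $*$ commutes with $J_*$, it preserves the eigenspaces of $(J_*)^2$, so $*\psi$ lies, together with $\psi$, in the $2$-dimensional eigenspace $\Lambda^{(3,0)+(0,3)}E$ on which $(J_*)^2=-9\,\Id$, and one checks these two forms are orthogonal of equal nonzero norm. As $J_*$ is skew-symmetric and preserves this plane, $J_*\psi=b(*\psi)$ and $J_*(*\psi)=c\,\psi$ with $b=-c$ (skew-symmetry) and $bc=-9$ (eigenvalue), hence $\{b,c\}=\{3,-3\}$; the correct choice $J_*\psi=3(*\psi)$, $J_*(*\psi)=-3\psi$ is fixed by reading off the coefficient of one monomial of $J_*\psi=\sum_iJ(e_i)\wedge e_i\lrcorner\psi$. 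For the contraction identities, the second follows from the first upon replacing $X$ by $JX$ and using $J^2=-\Id$, and the first, being bilinear in $(X,\sigma)\in E\times\Lambda^{(3,0)+(0,3)}E$, only has to be verified for $X$ a basis vector and $\sigma\in\{\psi,*\psi\}$ — a short direct calculation.

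Assertions (2) and (4) concern $E\cong\C^3$ as an $\SU(3)$-module. Here I would recall that the complex standard representation is of complex type, so $E$ is irreducible over $\R$; that $\Lambda^{(2,0)+(0,2)}E$, $\Lambda^{(1,1)}_0E\cong\su(3)$ and $\R\omega$ are mutually orthogonal, $\SU(3)$-invariant, pairwise non-isomorphic irreducible submodules of $\Lambda^2E$ of real dimensions $6,8,1$, whose sum is $\dim\Lambda^2E=15$; and that $\R\psi$ and $\R(*\psi)$ (two copies of the trivial module), the $12$-dimensional irreducible module $\Lambda^{(2,1)+(1,2)}_0E$ (the realification of a $6$-dimensional complex-irreducible module), and $\omega\wedge E\cong E$ are mutually orthogonal $\SU(3)$-invariant submodules of $\Lambda^3E$ of dimensions $1,1,12,6$, whose sum is $\dim\Lambda^3E=20$ — the complementarity of $\omega\wedge E$ and $\Lambda^{(2,1)+(1,2)}_0E$ coming from the elementary Lefschetz relation that $\Lambda(\omega\wedge X)$ is a nonzero multiple of $X$. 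For (4), $\Lambda^{(1,1)}_0E$ consists of those skew-symmetric endomorphisms which commute with $J$ and lie in $\ker\Lambda$, i.e. which are trace-free, so it equals the Lie algebra $\su(3)$ of the stabilizer of the whole $\SU(3)$-structure; therefore $\alpha_*\psi=0$, and then $\alpha_*(*\psi)=\tfrac13\alpha_*(J_*\psi)=\tfrac13 J_*(\alpha_*\psi)=0$, using that $\alpha_*$ commutes with $J_*$ since $\alpha$ commutes with $J$.

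Finally, for (3), the map $E\ni X\mapsto X\lrcorner\psi$ is $\SU(3)$-equivariant because $\psi$ is $\SU(3)$-invariant, and its image lies in $\Lambda^{(2,0)+(0,2)}E$: from the Leibniz rule $J_*(X\lrcorner\psi)=(JX)\lrcorner\psi+X\lrcorner(J_*\psi)$ together with (1) one gets $J_*(X\lrcorner\psi)=2\,X\lrcorner(*\psi)$ and hence $(J_*)^2(X\lrcorner\psi)=-4\,X\lrcorner\psi$, which is exactly the defining eigenvalue of the summand $\Lambda^{(2,0)+(0,2)}E\subset\Lambda^2E$. Since source and target are irreducible real $\SU(3)$-modules of equal dimension and the map is nonzero (e.g. $e_1\lrcorner\psi=e^{35}-e^{46}$), Schur's lemma makes it an isomorphism. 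The only mildly delicate point in the whole proof is the orientation and sign bookkeeping in (1) — pinning down $*\psi$ and the $J_*$-action on $\Lambda^{(3,0)+(0,3)}E$ consistently with the chosen conventions; everything else is routine computation and standard $\SU(3)$-representation theory.
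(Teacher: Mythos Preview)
The paper does not give a proof of this lemma: it is introduced by the sentence ``We list below some standard formulas and facts intensively used all over the paper'' and then stated without argument, as a collection of well-known identities. So there is nothing to compare against.

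Your proposal is correct and complete. The explicit computation of $*\psi$ in the adapted basis is right, and your indirect determination of $J_*$ on the $2$-plane $\Lambda^{(3,0)+(0,3)}E$ via skew-symmetry and the eigenvalue $-9$ is clean (one could alternatively just compute $J_*\psi$ term by term, but your way is shorter). The reduction of the contraction identities to a basis check is fine; note incidentally that since $**=-\Id$ on $3$-forms in dimension $6$, the case $\sigma=*\psi$ of the first identity is equivalent to the case $\sigma=\psi$ of the second, so one really only needs to verify a single identity for $\sigma=\psi$. Your treatment of (2) and (4) via the identification $\Lambda^{(1,1)}_0E\cong\su(3)$ and the dimension count is exactly the standard argument. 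For (3), the Leibniz computation $J_*(X\lrcorner\psi)=(JX)\lrcorner\psi+X\lrcorner(J_*\psi)=2\,X\lrcorner(*\psi)$ and its iterate giving eigenvalue $-4$ is a nice way to place the image in $\Lambda^{(2,0)+(0,2)}E$ without further basis work. The only point requiring care, as you note yourself, is consistency of orientation and sign conventions in (1); once those are fixed the rest is routine.
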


\subsection{$\G_2$-structures} The group $\G_2$ can be defined as the stabilizer in $\mathrm{GL}(7)$ of the $3$-form 
\be\label{stf}\f:=e^{123}+e^{145}+e^{167}+e^{246}-e^{257}-e^{347}-e^{356}\ ,\ee
and is automatically contained in $\SO(7)$ (see  \cite{B87}). A basis in which $\f$ has the above form is called adapted. By the previous observation, all adapted bases induce the same metric and orientation on $\R^7$.

A $\G_2$-structure on a 7-dimensional vector space $F$ is a $3$-form whose stabilizer is isomorphic to $\G_2$, or equivalently, which can be written in the form \eqref{stf} with respect to some basis of $F^*$. Again, such bases will be called adapted, and they all induce the same scalar product, sometimes denoted $g_\f$, and  the same orientation on $F$.

The following result is classical (see e.g. \cite[Prop. 2.3]{FKMS97}):
\begin{elem}\label{tran}
    The group $\G_2$ acts transitively on the set of pairs of orthonormal vectors in $F$.
\end{elem}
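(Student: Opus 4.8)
The plan is to reduce the statement to two transitivity facts: (i) $\G_2$ acts transitively on the unit sphere $S^6\subset F$ for the metric $g_\f$, and (ii) the stabilizer in $\G_2$ of a unit vector $v\in F$ acts transitively on the unit sphere of $v^\perp$. Granting these, given two orthonormal pairs $(u_1,u_2)$ and $(v_1,v_2)$ in $F$, I would first pick $g\in\G_2$ with $g(u_1)=v_1$ using (i); then $g(u_2)$ and $v_2$ are both unit vectors of $v_1^\perp$, so by (ii) there is $h$ in the stabilizer of $v_1$ with $h(g(u_2))=v_2$, and the element $hg\in\G_2$ sends $(u_1,u_2)$ to $(v_1,v_2)$.

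For (i), fix an adapted basis $(e_1,\dots,e_7)$ and set $v:=e_1$. I would compute the isotropy subalgebra $\mathfrak{h}:=\{A\in\so(F)\mid A_*\f=0,\ A(v)=0\}$. Since $A$ is skew-symmetric with $A(v)=0$, it preserves $v^\perp$ and annihilates $v^\flat$; writing $\f=v^\flat\wedge\omega+\psi$ with $\omega:=v\lrcorner\f\in\Lambda^2(v^\perp)$ and $\psi:=\f-v^\flat\wedge\omega\in\Lambda^3(v^\perp)$, one computes $\omega=e^{23}+e^{45}+e^{67}$ and $\psi=e^{246}-e^{257}-e^{347}-e^{356}$, which by Lemma \ref{op} form an $\SU(3)$-structure on $v^\perp$. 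With $A':=A|_{v^\perp}$, the relation $0=A_*\f=v^\flat\wedge(A'_*\omega)+A'_*\psi$ forces $A'_*\omega=0$ and $A'_*\psi=0$, the two summands lying in the complementary subspaces $v^\flat\wedge\Lambda^2(v^\perp)$ and $\Lambda^3(v^\perp)$ of $\Lambda^3F$. Now $A'_*\omega=0$ means $[A',J]=0$, so $A'$ lies in the space $\Lambda^{(1,1)}(v^\perp)\cong\mathfrak{u}(3)$ of Hermitian skew endomorphisms and decomposes as $A'=B+tJ$ with $B\in\Lambda^{(1,1)}_0(v^\perp)=\su(3)$ and $t\in\R$; then Lemma \ref{gen}(4) gives $B_*\psi=0$ while Lemma \ref{gen}(1) gives $J_*\psi=3(*\psi)\ne 0$, so $A'_*\psi=3t(*\psi)=0$ forces $t=0$. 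Conversely every $B\in\su(3)$ (extended by $0$ on $\R v$) annihilates $\omega$ and $\psi$, hence $\f$. Therefore $\mathfrak{h}\cong\su(3)$ has dimension $8$, so the differential $A\mapsto A(v)$ of the orbit map $\G_2\to S^6$, $g\mapsto g(v)$, has kernel $\mathfrak{h}$ and image of dimension $\dim\G_2-8=14-8=6=\dim T_vS^6$; hence this map is a submersion at the identity and the $\G_2$-orbit of $v$ is open in $S^6$. As $\G_2$ is compact (a closed subgroup of $\SO(7)$) its orbits are closed, and $S^6$ is connected, so the orbit equals $S^6$, proving (i).

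For (ii), the computation above identifies the identity component of the stabilizer of $e_1$ with the connected subgroup of $\SO(e_1^\perp)$ having Lie algebra $\su(3)$ in its standard action on $e_1^\perp\cong\C^3$, namely $\SU(3)$ itself; and $\SU(3)$ is transitive on $S^5\subset\C^3$, since any unit vector can be completed to a unitary basis whose determinant is made equal to $1$ by rescaling the last vector by a unit complex number. By (i) the same holds for the stabilizer of any unit vector, so (ii) follows, and with it the Lemma. The crux of the argument is the isotropy computation in (i): one has to describe correctly the $\SU(3)$-structure induced on $v^\perp$ and invoke the right items of Lemma \ref{gen} to pin down $\mathfrak{h}$; everything else — submersion forcing an open orbit, compactness forcing closed orbits, connectedness of $S^6$, and transitivity of $\SU(3)$ on $S^5$ — is routine.
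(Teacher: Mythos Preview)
Your argument is correct, but there is nothing in the paper to compare it with: the paper does not prove Lemma~\ref{tran} at all, merely calling it classical and citing \cite[Prop.~2.3]{FKMS97}.

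Your proof is the standard one: reduce to transitivity on $S^6$ plus transitivity of the point stabilizer on $S^5$; for the first, compute the isotropy algebra at $e_1$ to be $\su(3)$ (dimension $8$), so the orbit map $\G_2\to S^6$ has full rank at the identity, the orbit is open, and compactness of $\G_2\subset\SO(7)$ together with connectedness of $S^6$ forces the orbit to be all of $S^6$; for the second, identify the identity component of the stabilizer with $\SU(3)$ in its standard representation on $\C^3$, which is transitive on $S^5$. One small correction: where you write that $\omega$ and $\psi$ ``by Lemma~\ref{op} form an $\SU(3)$-structure on $v^\perp$'', Lemma~\ref{op} is the wrong citation --- it goes from an abstract $\SU(3)$-structure to a standard basis, not the other way. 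No lemma is needed here, since your explicit $\omega=e^{23}+e^{45}+e^{67}$ and $\psi=e^{246}-e^{257}-e^{347}-e^{356}$ are already in standard form and hence constitute an $\SU(3)$-structure by inspection, so the items of Lemma~\ref{gen} you invoke apply directly; citing Lemma~\ref{g2su3} instead would be circular, as its proof uses transitivity of $\G_2$ on $S^6$. With that adjustment the argument is complete.
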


We will now describe the relationship between $\G_2$- and $\SU(3)$-structures.

\begin{elem}\label{g2su3}
  Let $\f$ be a $\G_2$-structure on a $7$-dimensional vector space $F$ and let $\xi$ be a unit vector in $F$ (with respect to the induced metric $g_\f$). We denote $E:=\xi^\perp$, $g_0$ the restriction of $g_\f$ to $E$, and decompose $\f=\xi\wedge\o+\psi$, with $\o\in\Lambda^2E$, $\psi\in\Lambda^3E$. Then $(g_0,\o,\psi)$ is an $\SU(3)$-structure on $E$. Conversely, any $\SU(3)$-structure $(g_0,\o,\psi)$ on a $6$-dimensional vector space $E$ induces a $\G_2$-structure $\f:=\xi\wedge\o+\psi$ on $F:=\R\xi\oplus E$, compatible with the metric $g$ on $F$ extending $g_0$ and such that $\xi$ has unit length and is orthogonal to $E$.
\end{elem}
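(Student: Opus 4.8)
The plan is to prove Lemma \ref{g2su3} by reducing everything to the model case via the transitivity result of Lemma \ref{tran}, and then to do the verification once in a fixed adapted basis. The key observation is that both directions of the claimed equivalence are $\G_2$-equivariant (resp. $\SU(3)$-equivariant) constructions, so it suffices to check them at a single point of each relevant orbit.

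\medskip

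\emph{First direction.} Given $\f$ and a unit vector $\xi\in F$, I would invoke Lemma \ref{tran}: $\G_2$ acts transitively on orthonormal pairs, hence in particular transitively on unit vectors, so after applying an element of $\G_2$ (which fixes $\f$ and $g_\f$) I may assume that $\f$ is the standard form \eqref{stf} and that $\xi = e_1$. Then a direct inspection of \eqref{stf} gives the splitting $\f = e^1\wedge(e^{23}+e^{45}+e^{67}) + (e^{246}-e^{257}-e^{347}-e^{356})$, i.e. $\o = e^{23}+e^{45}+e^{67}$ and $\psi = e^{246}-e^{257}-e^{347}-e^{356}$, both living in $\Lambda^*E$ with $E = \xi^\perp = \mathrm{span}(e_2,\dots,e_7)$. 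Comparing with the normal form in Lemma \ref{op} — after the obvious relabelling $e_2,\dots,e_7 \mapsto e_1,\dots,e_6$ one has exactly $\o = e^{12}+e^{34}+e^{56}$ and $\psi = e^{135}-e^{146}-e^{236}-e^{245}$ — shows that $(g_0,\o,\psi)$ is in the model form, hence is an $\SU(3)$-structure on $E$. Since the identification was effected by an element of $\G_2\subset\SO(7)$, and the $\SU(3)$-structure property is invariant under $\SO(6)$, the conclusion holds for the original data.

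\medskip

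\emph{Converse direction.} Given an $\SU(3)$-structure $(g_0,\o,\psi)$ on a $6$-dimensional $E$, set $F:=\R\xi\oplus E$ with $g$ extending $g_0$, $|\xi|=1$, $\xi\perp E$, and put $\f:=\xi\wedge\o+\psi$. By Lemma \ref{op} there is an oriented orthonormal basis $\{e_i\}$ of $E$ bringing $(\o,\psi)$ to the model form; together with $\xi$ this yields an orthonormal basis $\{\xi,e_1,\dots,e_6\}$ of $F$. Relabelling $\xi\to e_1$, $e_i\to e_{i+1}$, the expression $\xi\wedge\o+\psi$ becomes precisely the standard $\G_2$-form \eqref{stf} (this is the same computation as above, read backwards). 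Hence $\f$ has stabilizer $\G_2$ and is a $\G_2$-structure; moreover the metric $g_\f$ it induces is the one for which $\{\xi,e_1,\dots,e_6\}$ is orthonormal, namely $g$, so the stated compatibility holds.

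\medskip

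The only genuinely computational step is the identification of the three-form \eqref{stf}, once $\xi=e_1$ is isolated, with the model pair $(\o,\psi)$ of Lemma \ref{op}: one must fix a consistent relabelling of the indices $\{2,\dots,7\}\to\{1,\dots,6\}$ under which $e^{23}+e^{45}+e^{67}$ becomes $e^{12}+e^{34}+e^{56}$ and simultaneously $e^{246}-e^{257}-e^{347}-e^{356}$ becomes $e^{135}-e^{146}-e^{236}-e^{245}$ (up to the $\SU(3)$-freedom of rotating in the coordinate $2$-planes, exactly as in the proof of Lemma \ref{op}). This is the main — and only — obstacle, and it is purely a bookkeeping matter with no conceptual content; everything else follows formally from Lemma \ref{tran} and the fact that both constructions are equivariant, so that checking the model case suffices.
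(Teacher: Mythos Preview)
Your proposal is correct and follows essentially the same strategy as the paper's proof: use transitivity of $\G_2$ on unit vectors to reduce the first direction to the model case $\xi=e_1$, and use Lemma~\ref{op} to normalize the $\SU(3)$-structure in the converse direction. The only minor difference is that in the first direction the paper verifies the defining conditions $|\psi|^2_{\Lambda^3E}=4$ and $\omega_*\omega_*\psi=-9\psi$ directly rather than matching against Lemma~\ref{op}; and your worry about the relabelling is unfounded---the naive shift $e_i\mapsto e_{i-1}$ carries $e^{23}+e^{45}+e^{67}$ to $e^{12}+e^{34}+e^{56}$ and $e^{246}-e^{257}-e^{347}-e^{356}$ to $e^{135}-e^{146}-e^{236}-e^{245}$ on the nose, exactly as the paper notes (``by shifting all indices by $1$'').
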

  \begin{proof}
      Using the transitivity of $\G_2$ on the unit sphere of $F$, one can find an adapted orthonormal basis $\{e_i\}$ such that $\xi=e_1$. Then $\{e_2,\ldots,e_7\}$ is an orthonormal basis of $(E,g_0)$, and $(g_0,\o:=e^{23}+e^{45}+e^{67},\psi:=e^{246}-e^{257}-e^{347}-e^{356})$ is an $\SU(3)$-structure. Indeed, $|\psi|^2_{\Lambda^3E}=4$ and an easy calculation gives $\o_*\o_*\psi=-9\psi$.

      Conversely, if $(g_0,\o,\psi)$ is an $\SU(3)$-structure on $E$, Lemma \ref{op} shows (by shifting all indices by $1$), that there exists an orthonormal basis $\{e_2,\ldots,e_7\}$ of $E$ such that $\o=e^{23}+e^{45}+e^{67}$ and $\psi=e^{246}-e^{257}-e^{347}-e^{356}$. Thus $\f:=\xi\wedge\o+\psi$ has the standard form \eqref{stf} by taking $e_1:=\xi$, so in particular is compatible with the metric $g$ defined above.
  \end{proof}

Finally, we describe the $4$-dimensional reduction of spaces with $\G_2$-structures, relative to the choice of a calibrated $3$-plane. Recall that a $3$-plane $P\subset F$ is called calibrated by $\f$ if the restriction of $\f$ to $P$ is a volume form of unit length with respect to the induced metric. Equivalently, $P$ is calibrated if there exists a basis $\{e_i\}$ adapted to $\f$ such that $P$ is spanned by the vectors $e_1, e_2, e_3$.

Let $\f$ be a $\G_2$-structure on $F$ and let $P$ be a calibrated $3$-plane. By definition, there exists an adapted basis $\{e_i\}$ such that $P$ is spanned by $e_1,e_2,e_3$. Then \eqref{stf} reads
\be\label{fb}\f=e^{123}+\sum_{i=1}^3e^i\wedge\beta_i\ ,\ee
where $\beta_1:=e^{45}+e^{67}$, $\beta_2:=e^{46}-e^{57}$, and $\beta^3:=-e^{47}-e^{56}$. We denote by $H$ the orthogonal complement of $P$, i.e. the $4$-dimensional vector space spanned by $e_4,\ldots,e_7$, with the induced metric and orientation. Then for $i\in\{1,2,3\}$ the $2$-forms $\b_i\in\Lambda^2H$ are self-dual, $|\beta_i|^2_{\Lambda^2 H}=2$, and $[\beta_i,\beta_j]=-2\beta_k$ for every even permutation $(i,j,k)$ of $\{1,2,3\}$. Conversely, we have the following:

\begin{elem}\label{phicom} Let $\la\cdot,\cdot\ra_H$ be a scalar product on an oriented $4$-dimensional vector space $H$, and let $\varphi_1,\varphi_2,\varphi_3\in\Lambda^+H$ be self-dual $2$-forms, not all zero, satisfying 
\be\label{beta} [\varphi_i,\varphi_j]=-2\varphi_k,\ \quad\mbox{ for every even permutation } (i,j,k)\mbox{ of } \{1,2,3 \}\ .\ee 
Then $|\varphi_i|^2_{\Lambda^2 H}=2$ for $i\in\{1,2,3\}$. Moreover, if $\{e_1,e_2,e_3\}$ denotes the standard basis of $\R^3$, then the $3$-form $\f$ defined by \eqref{fb} on the $7$-dimensional vector space $F:=\R^3\oplus H$ is a $\G_2$-structure compatible with the direct sum metric and orientation.
\end{elem}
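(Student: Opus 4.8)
The plan is to reduce everything to the model case by a careful choice of basis of $H$, exploiting the structure of $\Lambda^+H$ as a Lie algebra. First I would recall that for an oriented $4$-dimensional Euclidean space $H$, the space $\Lambda^+H$ of self-dual $2$-forms, equipped with the commutator bracket coming from the identification $\Lambda^2H\simeq\End^-(H)$, is a $3$-dimensional Lie algebra isomorphic to $\so(3)$; concretely, if $\{f_1,f_2,f_3,f_4\}$ is any oriented orthonormal basis, then $\eta_1:=f^{12}+f^{34}$, $\eta_2:=f^{13}-f^{24}$, $\eta_3:=-f^{14}-f^{23}$ form a basis of $\Lambda^+H$ with $[\eta_i,\eta_j]=-2\eta_k$ for even permutations and $|\eta_i|^2_{\Lambda^2H}=2$. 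Note the sign/normalization conventions here match exactly those announced for $\beta_1,\beta_2,\beta_3$ in \eqref{fb}, which is the point of the construction.

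Next I would show that the relations \eqref{beta} force $(\varphi_1,\varphi_2,\varphi_3)$ to be, up to an orientation-preserving orthogonal change of basis of $H$, exactly the triple $(\eta_1,\eta_2,\eta_3)$. The relations say that the linear map $e_i\mapsto\varphi_i$ from $\R^3$ (with its standard cross-product Lie algebra, rescaled) into $\Lambda^+H$ is a Lie algebra homomorphism. Since not all $\varphi_i$ vanish and $\so(3)$ is simple, this homomorphism is either injective or zero on a complement; a short argument rules out the degenerate possibilities (e.g. if one $\varphi_k=0$ then $[\varphi_i,\varphi_j]=0$, but two nonzero elements of $\Lambda^+H\cong\so(3)$ that bracket to zero must be proportional, contradicting $[\varphi_j,\varphi_k]=-2\varphi_i\neq0$ once one traces through). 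Hence the map is a Lie algebra isomorphism onto $\Lambda^+H$. Now $\Lambda^+H$ carries a natural inner product (the restriction of $\langle\cdot,\cdot\rangle_{\Lambda^2H}$), invariant under $\SO(H)$, and $\SO(H)$ acts on $\Lambda^+H$ through the covering $\SO(4)\to\SO(3)$, realizing \emph{all} of $\SO(\Lambda^+H)\cong\SO(3)$. Therefore I can find $A\in\SO(H)$ whose induced action sends the orthonormal-up-to-scale basis $(\eta_1,\eta_2,\eta_3)$ to $(\varphi_1,\varphi_2,\varphi_3)$ — one first checks that $(\varphi_i)$ is, like $(\eta_i)$, an orthogonal triple of equal norm (this follows purely from the bracket relations together with the $\so(3)$ structure and the invariant inner product, using e.g. that the Killing form is a multiple of the inner product), which simultaneously proves the claim $|\varphi_i|^2_{\Lambda^2H}=2$. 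Pulling back the basis $\{f_i\}$ by $A$ gives an oriented orthonormal basis $\{e_4,\dots,e_7\}$ of $H$ in which $\varphi_1=e^{45}+e^{67}$, $\varphi_2=e^{46}-e^{57}$, $\varphi_3=-e^{47}-e^{56}$.

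Finally I would conclude: with $\{e_1,e_2,e_3\}$ the standard basis of $\R^3$ and $\{e_4,\dots,e_7\}$ the basis of $H$ just constructed, formula \eqref{fb} becomes literally
\[
\f=e^{123}+e^{145}+e^{167}+e^{246}-e^{257}-e^{347}-e^{356},
\]
which is the standard form \eqref{stf}. Hence $\f$ is a $\G_2$-structure on $F=\R^3\oplus H$, and by the remarks following \eqref{stf} the metric and orientation it induces are the standard ones on the adapted basis $\{e_1,\dots,e_7\}$, i.e. precisely the direct sum metric and orientation on $\R^3\oplus H$. The main obstacle, and the only step requiring genuine care rather than bookkeeping, is the middle one: extracting from the purely algebraic bracket relations \eqref{beta} both the orthonormality/normalization of the triple $(\varphi_i)$ and the existence of an element of $\SO(H)$ — not merely of $\SO(\Lambda^+H)$ — realizing the required change of basis; this rests on the surjectivity of the spin covering $\SO(4)\to\SO(3)$ onto the isometry group of $\Lambda^+H$, which must be invoked explicitly.
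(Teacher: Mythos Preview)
Your proof is correct but follows a genuinely different route from the paper's. The paper works by direct computation inside $\Lambda^+H$: orthogonality of the $\varphi_i$ comes from $\la\varphi_i,\varphi_j\ra=-\tfrac12\la[\varphi_j,\varphi_k],\varphi_j\ra=0$; the norm $|\varphi_i|^2=2$ is extracted from the endomorphism identities $\varphi\circ\varphi=\tfrac14\tr(\varphi\circ\varphi)\Id_H$ and the anti-commutation of orthogonal self-dual forms; and the adapted basis is then built by hand (put $\varphi_1$ in standard form, rotate a $2$-plane so that $\varphi_2(f_1)=f_3$, read off $\varphi_3=-\tfrac12[\varphi_1,\varphi_2]$). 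You replace all of this with structure theory: simplicity of $\so(3)$ forces $e_i\mapsto\varphi_i$ to be a Lie algebra isomorphism onto $\Lambda^+H$, proportionality of the $\Lambda^2H$-inner product to the Killing form gives orthogonality and (after computing the constant on the model basis) the value $2$, and surjectivity of $\SO(H)\to\SO(\Lambda^+H)$ produces the adapted basis in one stroke. The paper's argument is more elementary and fully self-contained; yours is conceptually cleaner and makes transparent that the lemma is really the transitivity of $\SO(4)$ on positively oriented quaternionic frames of $\Lambda^+H$. Two small points worth spelling out when you write it up: the Killing-form argument gives equal norms for free, but the specific value $2$ requires computing the proportionality constant (e.g.\ $B(\eta_i,\eta_i)=-8$ versus $|\eta_i|^2=2$); and to land in $\SO(\Lambda^+H)$ rather than $\OO(\Lambda^+H)$ you are implicitly using that every automorphism of $\so(3)$ is inner.
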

\begin{proof}
        We first notice that for every $i\ne j\in\{1,2,3\}$, if $k$ denotes the index such that $(i,j,k)$ is an even permutation, then \eqref{beta} gives $\la\varphi_i,\varphi_j\ra_{\Lambda^2 H}=-\frac12\la [\varphi_j,\varphi_k],\varphi_j\ra_{\Lambda^2 H}=0$. This shows that $\varphi_1,\varphi_2,\varphi_3$ are mutually orthogonal. If one of $\varphi_i$ vanishes, we get immediately from \eqref{beta} that the two other vanish as well. Thus $\{\varphi_1,\varphi_2,\varphi_3\}$ is an orthogonal basis of $\Lambda^+H$. It is easy to check that every two orthogonal elements of $\Lambda^+H$ anti-commute (as endomorphisms), and for every $\varphi\in\Lambda^+H$ one has $\varphi\circ\varphi=\frac14\tr(\varphi\circ\varphi)\Id_H$. From \eqref{beta} we then obtain
    \bea4\tr(\varphi_k\circ\varphi_k)&=&\tr([\varphi_i,\varphi_j]\circ [\varphi_i,\varphi_j])=-4\tr(\varphi_i\circ\varphi_j\circ\varphi_j\circ\varphi_i)\\
    &=&-\frac14\tr[\tr(\varphi_j\circ\varphi_j)
        \tr(\varphi_i\circ\varphi_i)\Id_H]
    \,=  -\tr(\varphi_j\circ\varphi_j)\tr(\varphi_i\circ\varphi_i)\ .\eea
    As $|\varphi_i|^2_{\Lambda^2 H}=-\frac12\tr(\varphi_i\circ\varphi_i)$ for every $i$, this implies that $|\varphi_i|^2_{\Lambda^2 H}=\frac12|\varphi_j|^2_{\Lambda^2 H}|\varphi_k|^2_{\Lambda^2 H}$ for every even permutation $(i,j,k)$ of $ \{1,2,3 \}$, so $|\varphi_i|^2_{\Lambda^2 H}=2$ for every $i$. 

    The endomorphisms corresponding to $\varphi_i$ are thus complex structures on $H$ compatible with the orientation. We write $\f_1$ in standard form in some orthonormal basis $\{f_1,\ldots,f_4\}$ of $H$ as $\varphi_1=f^{12}+f^{34}$. Since $\varphi_2\in\Lambda^+H$ is orthogonal to $f^{12}-f^{34}\in\Lambda^-H$, we have 
    $$\la\varphi_2(f_1),f_2\ra_H=\la\varphi_2,f^{12}\ra_{\Lambda^2 H}=\frac12\la\varphi_2,f^{12}+f^{34}\ra_{\Lambda^2 H}=\la\varphi_2,\varphi_1\ra_{\Lambda^2 H}=0\ .$$
    Consequently $\varphi_2(f_1)$ is orthogonal to $f_1$ and $f_2$ and has unit length. Up to a rotation in the plane generated by $f_3,f_4$ (which does not change the expression of $\varphi_1$) we can thus assume that $\varphi_2(f_1)=f_3$. Then $\varphi_2(f_3)=-\f_1$. The same argument shows that $\varphi_2(f_2)$ is orthogonal to $f_1$ and $f_2$, but also to $f_3$ (since $\la\varphi_2(f_2),f_3\ra_H=-\la f_2,\varphi_2(f_3)\ra_H=\la f_2,f_1\ra_H=0$. Thus $\varphi_2(f_2)=\e f_4$ and $\varphi_2 e_4=-\e f_2$, i.e. $\varphi_2=f^{13}+\e f^{24}$. As $\varphi_2\in\Lambda^+H$ we must have $\varphi_2=f^{13}-f^{24}$. Finally, $\varphi_3=-\frac12[\varphi_1,\varphi_2]=-\frac12[f^{12}+f^{34},f^{13}-f^{24}]=-(f^{14}+f^{23})$. 

    We have thus shown that there exists an oriented orthonormal basis $\{e_4,\ldots,e_7\}$ (with $e_i:=f_{i-3}$ for $i\in\{4,5,6,7\}$) of $H$ such that $\varphi_1=e^{45}+e^{67}$, $\varphi_2=e^{46}-e^{57}$, and $\varphi^3=-e^{47}-e^{56}$. Thus the form $\f$ defined in \eqref{fb} has the standard form \eqref{stf} with respect to the orthonormal basis $\{e_1,\ldots,e_7\}$ of $\R^3\oplus H$.
    \end{proof}

All the above considerations will be transposed to Riemannian manifolds in the sequel.

\subsection{Sasaki-type structures}

In this article we will meet at several places special types of contact structures. For the convenience of the reader we will recall their definitions.

\begin{ede}\label{a-sas}
    An {\em $\alpha$-Sasaki manifold} $(M, g, \xi, \Phi)$ is a Riemannian manifold $(M^{2n+1},g)$  together with a unit
    Killing vector field $\xi$ and a skew-symmetric endomorphism
    $\Phi$ satisfying
    $$
   \Phi^2 = -\Id + \xi \otimes \xi, \quad  \nabla^g_X\xi = \alpha \Phi(X), \quad
        \nabla^g_X\Phi^\flat = - \alpha X^\flat \wedge \xi^\flat,\qquad\forall X\in \T M\ ,
    $$
    where the $2$-form $\Phi^\flat$ is defined by 
    $\Phi^\flat(X, Y) = g(\Phi X,Y)$ and $X^\flat, \xi^\flat$
    are the dual $1$-forms.
\end{ede}

For $\alpha=1$ this gives the standard definition of a 
{\it Sasaki manifold}. It is easy to check that an $\alpha$-Sasaki manifold
$(M, g, \xi, \Phi)$ becomes a Sasaki manifold
after scaling the metric as $\tilde g = \alpha^2 g$ and the  vector field
as $\tilde \xi = \frac{\xi}{\alpha}$, while keeping the 
endomorphism $\Phi$ unchanged. Note that for the $2$-forms one has
$
\Phi^{\tilde \flat} = \lambda^2 \Phi^\flat
$,
where $\Phi^{\tilde \flat}$ is defined from $\Phi$ using $\tilde g$.

Similarly, there exists a modification of $3$-Sasaki structures introduced in \cite{AD20} under the name of $3$-$(\alpha, \delta)$-Sasaki structure (see also \cite[Sec. 2.11]{MS24}).

\begin{ede}\label{3-sas}
A {\it $3$-$(\alpha, \delta)$-Sasaki manifold} 
$(M, g, \xi_i, \Phi_i),\  i\in\{1,2,3\},$ is a Riemannian
manifold $(M^{4n+3}, g)$ with three unit Killing vector fields $\xi_i$, together with
three skew-symmetric endomorphisms $\Phi_i$ satisfying
\begin{align*}
\xi_k &= - \Phi_i \xi_j = \Phi_j \xi_i \\
\Phi_k X &= - \Phi_i \Phi_j X + g(\xi_j, X)\xi_i = \Phi_j \Phi_i - g(\xi_i, X)\xi_j \qquad \forall X \in \T M \\
d\xi^\flat_i &=2\alpha \Phi^\flat_i + 2(\alpha -\delta) \xi_j \wedge \xi_k 
        =2\alpha \Phi^H_i - 2\delta \xi_j \wedge \xi_k
\end{align*}
where the $2$-forms  $\Phi^H_i$ are defined by 
$\Phi^H_i: = \Phi^\flat_i + \xi_j \wedge \xi_k$.
\end{ede}
For $\alpha = \delta = 1$ one retrieves the classical definition of
 {\it $3$-Sasaki} structures. If $\alpha\delta>0$, every $3$-$(\alpha, \delta)$-Sasaki manifold can be obtained from a $3$-Sasaki manifold by rescaling the metric with different factors on the horizontal and vertical distributions \cite{AD20}.

\subsection{Nearly K\"ahler and Calabi-Yau manifolds}

\begin{ede}
A {\em strict nearly K\"ahler manifold} is a Riemannian manifold
$(M,g)$ together with an almost complex structure $J$ compatible with the
metric and such that $(\nabla^g_XJ)X = 0$ for all $X \in \T M$ and
$\nabla_XJ \neq 0$ for all $X\neq 0$.
\end{ede}

Then we have the following well-known lemma (see e.g. \cite[Lemma 2.4]{BM01}).

\begin{elem} \label{nks}
Let $(M^6, g, J)$ be a strict nearly K\"ahler manifold. Then the $3$-form
$\sigma := - \frac16 * d \omega $ is a non-zero section $\Lambda^{(3,0)+(0,3)}\T M$,
and the metric connection $\nabla^\sigma := \nabla^g + \sigma$ satisfies
$$
\nabla^\sigma J = 0
\qquad \mbox{and} \qquad
\nabla^\sigma \sigma = 0 \ .
$$
\end{elem}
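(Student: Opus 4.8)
The plan is to exploit the standard structure theory of $6$-dimensional strict nearly Kähler manifolds, using the classical identity of Gray that on such a manifold the tensor $\nabla^g J$ is totally skew-symmetric and of type $(3,0)+(0,3)$, and that $\nabla^g\omega$ (viewed as a $3$-form) is a constant multiple of $*\,d\omega$. Concretely, I would first recall that $d\omega = 3\,\nabla^g\omega$ as a $3$-form, and that $*\,d\omega \in \Lambda^{(3,0)+(0,3)}\T M$ because $\nabla^g\omega$ has that type; this immediately gives that $\sigma := -\tfrac16 *\,d\omega$ is a section of $\Lambda^{(3,0)+(0,3)}\T M$, and it is nonzero precisely because the nearly Kähler structure is strict (so $\nabla_X J\ne 0$ for $X\ne 0$, forcing $d\omega\ne 0$). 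The normalization constant $-\tfrac16$ is chosen exactly so that the associated connection has the right torsion; I would verify the value by a pointwise computation in an adapted frame using Lemma \ref{op}, comparing $\sigma$ with the tensor $(\nabla^g_X J)Y$.

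Next I would turn to the key assertion $\nabla^\sigma J = 0$, where $\nabla^\sigma := \nabla^g + \sigma$ means that $\nabla^\sigma_X Y = \nabla^g_X Y + \sigma(X,Y,\cdot)^\sharp$, i.e. the torsion $3$-form of $\nabla^\sigma$ is $2\sigma$ (or $6\sigma$, depending on the convention; I would fix the convention so that the endomorphism part acting on $Y$ is $\tfrac12(2\sigma)(X,Y,\cdot)^\sharp$). The computation is: $(\nabla^\sigma_X J)Y = (\nabla^g_X J)Y + [\,\sigma_X, J\,]Y$, where $\sigma_X\in\End^-(\T M)$ denotes the skew-symmetric endomorphism $Z\mapsto \sigma(X,Z,\cdot)^\sharp$. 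Since $\sigma$ is of type $(3,0)+(0,3)$, by Lemma \ref{gen}(1) we have $JX\lrcorner\sigma = -X\lrcorner(*\sigma)$ and $X\lrcorner\sigma = JX\lrcorner(*\sigma)$, and more to the point the endomorphism $\sigma_X$ anti-commutes with $J$ in the appropriate sense; I would use precisely this to rewrite $[\sigma_X,J]$ in terms of $\nabla^g J$, and check that it cancels $(\nabla^g_X J)Y$ up to the chosen normalization constant. This is the step where the value $-\tfrac16$ (as opposed to any other constant) is forced, and it is essentially the computation of Kirichenko / Friedrich that the canonical Hermitian connection of a nearly Kähler $6$-manifold has torsion $-\tfrac16*d\omega$.

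Finally, for $\nabla^\sigma\sigma = 0$ I would argue as follows. Because $\nabla^\sigma J = 0$ and $\nabla^\sigma g = 0$ (as $\nabla^\sigma$ is metric, the skew-symmetry of $\sigma$ in the last two arguments ensuring this), the holonomy of $\nabla^\sigma$ lies in $\U(3)$, so $\nabla^\sigma$ preserves the full type decomposition of $\Lambda^*\T M$; in particular $\nabla^\sigma\sigma$ is again a section of $\Lambda^{(3,0)+(0,3)}\T M$, which is a real rank-$2$ bundle spanned pointwise by $\sigma$ and $*\sigma = J_*\sigma/3$. Hence $\nabla^\sigma_X\sigma = a(X)\sigma + b(X)*\sigma$ for $1$-forms $a,b$. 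Since $\nabla^\sigma$ is metric and $|\sigma|^2$, $\la\sigma,*\sigma\ra$ are constant, differentiating gives $a\equiv 0$ and $b\equiv 0$ — except one must rule out the "rotation'' term $b(X)*\sigma$, which is metric-compatible. For that I would use the algebraic fact (a short computation, or a standard identity for nearly Kähler $6$-manifolds) that $d\sigma$ is a multiple of $\omega\wedge\omega$ and $d(*\sigma) = \tfrac{?}{}\,\omega\wedge\omega$ with coefficients forcing $b=0$; equivalently, I would compute $\delta^{\nabla^\sigma}$-type identities, or simply invoke that $\nabla^\sigma$ has parallel skew-symmetric torsion, which for the canonical nearly Kähler connection is exactly the classical statement being proved. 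The main obstacle is this last point: showing that the $\U(3)$-parallel transport does not rotate $\sigma$ within its $2$-dimensional type space, i.e. that the connection really reduces to $\SU(3)$ and not merely $\U(3)$; concretely this amounts to checking that the $\su(3)$-component, rather than the full $\u(3)$-component, of the curvature/torsion acts trivially on $\sigma$, which I would settle by the explicit frame computation of $\nabla^g\sigma$ combined with the already-established formula for the endomorphism part $\sigma_X$ of the torsion.
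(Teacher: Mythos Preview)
The paper does not give its own proof of this lemma; it states it as ``well-known'' and simply cites \cite[Lemma 2.4]{BM01}. So there is no in-paper argument to compare your proposal against.

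On the merits of your outline: the first two steps are essentially correct. That $\nabla^g\omega$ is totally skew and lies in $\Lambda^{(3,0)+(0,3)}$ is Gray's classical result, and since the Hodge star preserves this type in dimension $6$, the claim about $\sigma$ follows; strictness gives $\sigma\neq 0$. For $\nabla^\sigma J=0$, the computation $(\nabla^\sigma_X J)=(\nabla^g_X J)+[\sigma_X,J]$ together with the fact that $\sigma_X\in\Lambda^{(2,0)+(0,2)}$ (so it anti-commutes with $J$) reduces the question to a single normalization, and your plan to check the constant $-\tfrac16$ in an adapted frame is fine. (The paper in fact carries out the reverse direction of exactly this computation in \eqref{nj}.)

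The gap is in your third step. You correctly observe that metricity and constancy of $|\sigma|^2$ and $\langle\sigma,*\sigma\rangle$ only force $a\equiv 0$; they say nothing about the rotation coefficient $b$, since a rotation in the $2$-plane $\mathrm{span}(\sigma,*\sigma)$ is isometric. Your proposed fixes are either circular (``invoke that $\nabla^\sigma$ has parallel skew-symmetric torsion'') or not pinned down (``explicit frame computation''). What is actually needed here is a genuine second-order input: Gray--Kirichenko's identity that on a nearly K\"ahler $6$-manifold the tensor $(\nabla^g)^2 J$ is expressible \emph{algebraically} in terms of $g$, $J$, and $\nabla^g J$ (equivalently, that the intrinsic torsion is parallel for the characteristic connection). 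Concretely, one shows $\nabla^g_X\sigma=-(\sigma_X)_*\sigma$ by computing both sides; the left-hand side requires knowing $\nabla^g(\nabla^g\omega)$, not just $\nabla^g\omega$. Without invoking or reproving that second-order identity, your argument cannot exclude the $b(X)\,{*}\sigma$ term, and the proof does not close.
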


\begin{ede}\label{defcy}
A {\it Calabi-Yau manifold} is a K\"ahler manifold $(M^{2m},g,J)$ with a non-zero $\nabla^g$-parallel complex volume form 
$\Psi \in \Omega^{(m,0)}(M)$.
\end{ede}

Recall that Calabi-Yau manifolds are automatically Ricci-flat. Conversely a Ricci-flat K\"ahler manifold is locally Calabi-Yau.

\subsection{Metric connections with parallel skew-symmetric torsion}\label{CMS}

Let $(M,g)$ be a Riemannian manifold with Levi-Civita connection $\nabla^g$. We will most of the time identify vectors and 1-forms, or skew-symmetric endomorphisms and 2-forms using the metric $g$. Calculations will be done using a local orthonormal frame $\{e_i\}$.

Let $\tau\in\Omega^3(M)$ be a $3$-form. For every vector field $X\in\Gamma(\T M)$ one denotes by $\tau_X$ the endomorphism of $\T M$ defined by $g(\tau_X(Y),Z)=\tau(X,Y,Z)$ for every $Y,Z\in\Gamma(\T M)$, and by $\nt$ the metric connection defined by 
 $$\nt_X:=\n^g_X+(\tau_X)_*,\qquad\forall X\in \T M\ ,$$ 
 whose torsion satisfies $g(T^{\nt}(X,Y),Z)=2\tau(X,Y,Z)$ for every $X,Y,Z\in\Gamma(\T M)$.

Let $\nabla^\tau = \nabla^g + \tau$ be a connection with parallel skew-symmetric
torsion, i.e. $\nabla^\tau \tau = 0$ and denote with $\hol$
the holonomy algebra of $\nabla^\tau$, acting naturally on $\T M$. In \cite{CMS21}
we introduced the {\it standard decomposition } $\T M = \V \oplus \H $
in {\it vertical} and {\it horizontal} directions. 
The horizontal subspace  $\H\subset \T M$ is defined as the sum of $\hol$-irreducible summands
$\H_\alpha$ such that for each $\H_\alpha$ there is an element in $\hol$, which acts non-trivially on $\H_\alpha$ and trivially on $\H_\alpha^\perp$. The subspace $\V = \H^\perp$ is the
direct sum of irreducible summands where no such element exists. 

\begin{ere}\label{par-vf}
    Every $\nabla^\tau$-parallel vector field $\xi$ is tangent to the vertical distribution $\V$ (since the holonomy algebra acts trivially on $\R\xi$).
\end{ere}

The standard decomposition of $\T M$ is $\nabla^\tau$-parallel. 
In \cite[Lem. 3.7]{CMS21} we showed that $\V$ is the vertical distribution 
of  a locally defined Riemannian submersion $\pi: (M,g) \rightarrow (N, g^N)$ with totally geodesic leaves. We call $ \pi $ the {\it standard submersion} of a manifold with parallel skew-symmetric torsion. The fibres of $\pi$ turn out
to be naturally reductive locally homogeneous spaces (see \cite[Prop. 3.13]{CMS21}). In particular, if $\H=0$ then $M$ itself is  a
 naturally reductive locally homogeneous space (see \cite[Rem. 3.14]{CMS21}).
 
The horizontal part of the torsion is projectable to the base $N$
of the standard submersion (cf. \cite[Lemma 3.10]{CMS21}, see also Lemma \ref{Lie} below), where it defines again a connection with parallel skew-symmetric torsion (cf. \cite[Rem. 3.11]{CMS21}, see also Lemma \ref{par} below).

\section{$\nabla^\tau$-parallel vector fields}

The local de Rham theorem states that when a Riemannian manifold $(M,g)$ carries non-zero vector field which is parallel with respect to the Levi-Civita connection $\n^g$, then the manifold is locally isometric to a Riemannian product $\R\times (N,g^N)$. We will now investigate the more general case where the Levi-Civita connection is replaced with a connection $\nt:=\n^g+\tau$ with skew-symmetric torsion. 

 The results in this section are related to the submersion constructions presented in \cite{CMS21} and \cite{MS24} when the vertical distribution has dimension 1. They explain the emergence of $\alpha$-Sasakian manifolds as building blocks in cases 3(c) and 4(b) of Theorem \ref{main}. Note however that the present setting is more general than the ones in \cite{CMS21} and \cite{MS24} since we do not assume that $\nt\tau=0$ for the moment.

Let $\xi$ be a unit length $\nt$-parallel vector field on $(M,g)$. 
\begin{elem}\label{xik}
    The vector field $\xi$ is Killing and satisfies $d\xi=2\tau_\xi$.
\end{elem}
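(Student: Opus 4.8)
The plan is to work directly from the defining relation $\nt_X\xi = \n^g_X\xi + (\tau_X)_*\xi = 0$, which holds for all $X\in\T M$. Since $\xi$ is a vector field (degree one), $(\tau_X)_*\xi$ is just $\tau_X(\xi)$, the image of $\xi$ under the skew-symmetric endomorphism $\tau_X$. Hence $\nt\xi=0$ rewrites as $\n^g_X\xi = -\tau_X(\xi)$ for every $X$.

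First I would show that $\xi$ is Killing. Recall that $\xi$ is Killing precisely when the endomorphism $X\mapsto \n^g_X\xi$ is skew-symmetric, i.e. $g(\n^g_X\xi,Y)+g(\n^g_Y\xi,X)=0$ for all $X,Y$. Substituting $\n^g_X\xi=-\tau_X(\xi)$, this becomes $-g(\tau_X(\xi),Y)-g(\tau_Y(\xi),X)=0$, i.e. $\tau(X,\xi,Y)+\tau(Y,\xi,X)=0$, which is immediate from the total skew-symmetry of $\tau$ (the two terms differ by the transposition $X\leftrightarrow Y$ combined with an overall sign, hence cancel). So $\xi$ is Killing.

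Next I would compute $d\xi^\flat$. For a Killing field one has the standard identity $d\xi^\flat(X,Y) = 2\,g(\n^g_X\xi,Y)$ (this follows from $d\xi^\flat(X,Y)=g(\n^g_X\xi,Y)-g(\n^g_Y\xi,X)$ together with skew-symmetry of $\n^g\xi$). Plugging in $\n^g_X\xi = -\tau_X(\xi)$ gives $d\xi^\flat(X,Y) = -2\,g(\tau_X(\xi),Y) = -2\tau(X,\xi,Y) = 2\tau(\xi,X,Y)$. By the definition $g(\tau_\xi(X),Y)=\tau(\xi,X,Y)$, the right-hand side is $2\,g(\tau_\xi(X),Y) = 2(\tau_\xi)^\flat(X,Y)$, i.e. $d\xi^\flat = 2\tau_\xi$ under the identification of $2$-forms with skew-symmetric endomorphisms. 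This is the claimed formula (written $d\xi=2\tau_\xi$ in the statement).

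There is no real obstacle here: the proof is a two-line unwinding of definitions plus the elementary Killing-field identities. The only point requiring a little care is bookkeeping of signs and the placement of $\xi$ among the three arguments of $\tau$, together with keeping track of the convention (stated in the excerpt) that $2$-forms and skew-symmetric endomorphisms are identified via the metric without the customary factor of $2$, so that $d\xi^\flat(X,Y)=g(\n^g_X\xi,Y)-g(\n^g_Y\xi,X)$ rather than half of that. I would state the Killing identity and the formula for $d$ of the dual of a Killing field explicitly before substituting, so the reader can check the signs.
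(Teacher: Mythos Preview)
Your proof is correct and follows essentially the same approach as the paper: both start from $\nabla^g_X\xi=-\tau_X\xi=\tau_\xi X$, conclude that $\nabla^g\xi=\tau_\xi$ is skew-symmetric (hence $\xi$ is Killing), and then use $d\xi=2\nabla^g\xi$. The paper compresses your sign-checking into the single observation $\nabla^g\xi=\tau_\xi$, but the content is identical.
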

\begin{proof}
    For every $X\in\Gamma(\T M)$ we have
\be\label{nx}\n^g_X\xi=\nt_X\xi-\tau_X\xi=\tau_\xi X\ ,\ee
whence $\nabla^g \xi=\tau_\xi$ is skew-symmetric, so $\xi$ is Killing. Using this we then get
\begin{equation}\label{dxi}
    d\xi=2\nabla\xi=2\tau_\xi\ .
\end{equation}
\end{proof}

We fix as before a local orthonormal frame $\{e_i\}$.

\begin{elem}\label{dbeta}
  For every $\nt$-parallel form $\beta$ on $M$ we have $d\beta=2\sum_i (e_i\lrcorner\tau)\wedge(e_i\lrcorner\beta)$.
\end{elem}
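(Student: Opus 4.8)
The statement to prove is Lemma \ref{dbeta}: for every $\nt$-parallel form $\beta$ on $M$, one has $d\beta = 2\sum_i (e_i\lrcorner\tau)\wedge(e_i\lrcorner\beta)$. The natural strategy is to express the exterior derivative $d$ in terms of the metric connection $\nt$ and its torsion. Recall the standard formula $d\beta = \sum_i e_i\wedge\nabla^g_{e_i}\beta$, valid for any torsion-free metric connection; I would start from this. Since $\nt = \nabla^g + \tau$, with $\nt_X = \nabla^g_X + (\tau_X)_*$, we get $\nabla^g_{e_i}\beta = \nt_{e_i}\beta - (\tau_{e_i})_*\beta$. Using $\nt\beta = 0$, this reduces to $d\beta = -\sum_i e_i\wedge\big((\tau_{e_i})_*\beta\big)$.

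**The key step.** The task is then purely algebraic: to show that $-\sum_i e_i\wedge\big((\tau_{e_i})_*\beta\big) = 2\sum_i (e_i\lrcorner\tau)\wedge(e_i\lrcorner\beta)$. Here I would use the expansion of the derivation extension: for a skew-symmetric endomorphism $A$, $A_*\beta = \sum_j A(e_j)\wedge(e_j\lrcorner\beta)$ on the exterior algebra (this formula is recorded in the Preliminaries). Applying this with $A = \tau_{e_i}$, and noting that $\tau_{e_i}(e_j) = e_i\lrcorner(e_j\lrcorner\tau)$ — equivalently, $\tau_{e_i} = -e_i\lrcorner\tau$ as a $2$-form, so $\tau_{e_i}(e_j) = (e_i\lrcorner\tau)(e_j,\cdot)$, giving $\tau_{e_i}(e_j)\wedge(e_j\lrcorner\beta)$ with $\tau_{e_i}(e_j) = -e_j\lrcorner(e_i\lrcorner\tau)$ — I would rewrite the double sum and reorganize it into the claimed form. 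The sign bookkeeping is the only delicate point: one must be careful that $\tau_{e_i}$ viewed as a $2$-form is $e_i\lrcorner\tau$ (matching the sign convention $g(\tau_X Y,Z) = \tau(X,Y,Z)$), and that interchanging the roles of the summation indices $i$ and $j$ together with an anticommutation $e_i\wedge(e_j\lrcorner\beta)$ versus $(e_i\lrcorner\tau)\wedge(\cdots)$ produces exactly the factor $2$.

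**The main obstacle.** The only real subtlety — there is no deep obstacle here, this is a routine identity — is tracking the signs and the factor $2$ coming from the symmetrization over the two slots of the $2$-form $e_i\lrcorner\tau$. Concretely, after substituting $A_*\beta = \sum_j \tau_{e_i}(e_j)\wedge(e_j\lrcorner\beta)$ into $-\sum_i e_i\wedge A_*\beta$, one has a sum over pairs $(i,j)$ of terms $\mp\, e_i\wedge(e_j\lrcorner(e_i\lrcorner\tau))\wedge(e_j\lrcorner\beta)$; moving $e_i\wedge$ past the $1$-form $e_j\lrcorner(e_i\lrcorner\tau)$ and using $e_i\wedge(e_j\lrcorner(e_i\lrcorner\tau)) = -(e_i\lrcorner\tau)\wedge\cdots$ plus the identity $\sum_i e_i\wedge(e_i\lrcorner\alpha) = p\,\alpha$ on $\Lambda^p$, together with a relabeling $i\leftrightarrow j$, should collapse everything to $2\sum_i(e_i\lrcorner\tau)\wedge(e_i\lrcorner\beta)$. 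I would verify the identity on a homogeneous form $\beta\in\Lambda^p$ and present the computation compactly, invoking \eqref{degree} where the contraction sums appear.
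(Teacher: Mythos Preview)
Your approach is exactly the paper's: write $d\beta=\sum_i e_i\wedge\nabla^g_{e_i}\beta=-\sum_i e_i\wedge(\tau_{e_i})_*\beta$, expand $(\tau_{e_i})_*\beta=\sum_j\tau_{e_i}e_j\wedge(e_j\lrcorner\beta)$, use $\tau_{e_i}e_j=-\tau_{e_j}e_i$, and collapse via $\sum_i e_i\wedge\tau_{e_j}e_i=2\tau_{e_j}=2(e_j\lrcorner\tau)$ from \eqref{degree}. One small slip: in your parenthetical you briefly write ``$\tau_{e_i}=-e_i\lrcorner\tau$'' before correcting yourself a few lines later; the correct identification is $\tau_{e_i}=e_i\lrcorner\tau$ as a $2$-form, and with that the signs work out exactly as you describe.
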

\begin{proof}
   Simple calculation:
   \bea d\beta&=&\sum_i e_i\wedge\n^g_{e_i}\beta=-\sum _ie_i\wedge(\tau_{e_i})_*\beta=-\sum_{i,j} e_i\wedge\tau_{e_i}e_j\wedge (e_j\lrcorner\beta)\\
   &=&\sum_{i,j} e_i\wedge\tau_{e_j}e_i\wedge (e_j\lrcorner\beta)=2\sum_j\tau_{e_j} \wedge (e_j\lrcorner\beta)\ .
   \eea
\end{proof}

We decompose $\T M=\R \xi\oplus\D$, where $\D:= \xi^\perp$, and write correspondingly 
\be\label{taudec}\tau=\xi\wedge\gamma+\sigma\ ,\ee 
with $\gamma:=\xi\lrcorner\tau\in\Omega^2(\D)$, $\sigma\in\Omega^3(\D)$. The notation $\Omega^p(\D)$ stands here for elements $\beta\in\Omega^p(M)$ such that $\xi\lrcorner\beta=0$. 

The unit Killing vector field $\xi$ determines a local Riemannian submersion with 1-dimen\-sional totally geodesic fibers $\pi:(M,g)\to (N,g^N)$, such that $g=\xi\otimes\xi+\pi^*g^N$. We denote by $\nabla^{g^N}$ the Levi-Civita connection of $g^N$. An exterior form $\beta$ on $M$ is the pull-back of a form on $N$ if and only if it is {\it basic}, i.e. $\xi\lrcorner\beta=0$ and $\L_\xi\beta=0$.

\begin{elem}\label{par}
Assume that the components $\gamma$ and $\sigma$ of the torsion form $\tau$ are basic, and write $\gamma=\pi^*(\gamma^N)$, $\sigma=\pi^*(\sigma^N)$ with $\gamma^N\in\Omega^2(N)$, $\sigma^N\in\Omega^3(N)$. 
Let $\tilde\beta \in \Omega^p(M)$ such that $\gamma_*\tilde\beta = 0 = \xi \lrcorner \beta$ and $\nabla^\tau \tilde\beta = 0$. Then there exist a $p$-form $\beta \in \Omega^p(N)$
with $\tilde\beta = \pi^*\beta$ and $\nabla^{\sigma^N}\beta = 0$, where $\nabla^{\sigma^N}:=\nabla^{g^N}+\sigma^N$. 

Conversely, if $\beta \in \Omega^p(N)$ such that $\gamma^N_*\beta = 0$
and  $\nabla^{\sigma^N}\beta = 0$, then $\tilde\beta := \pi^*\beta$
is $\nabla^\tau$-parallel.
\end{elem}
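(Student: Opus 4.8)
The plan is to carry out the computation in two directions, using the orthonormal-frame formulas from Section 2 together with the submersion dictionary. Fix a local orthonormal frame $\{e_i\}$ on $M$ with $e_0 := \xi$ and $\{e_\alpha\}$ ($\alpha\ge 1$) horizontal, i.e. $\pi$-related to a local orthonormal frame on $N$. The key structural input I would use is O'Neill-type formulas for a Riemannian submersion with one-dimensional totally geodesic fibers: since $\xi$ is Killing and unit, the horizontal distribution $\D$ need not be integrable, but its non-integrability is measured precisely by $d\xi^\flat$, and by Lemma \ref{xik} we have $d\xi = 2\tau_\xi = 2\gamma$. Concretely, for horizontal $X,Y$ one has $\nabla^g_X Y = \pi^*(\nabla^{g^N}Y) - \langle\gamma(X),Y\rangle\,\xi$ (the last term being $\tfrac12 d\xi(X,Y)\,\xi$ with our sign conventions), while $\nabla^g_X\xi = \gamma(X)$ by \eqref{nx} and $\nabla^g_\xi$ acts as the holonomy rotation of the fibration. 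I would first record these formulas carefully, as they are the engine of the whole argument.

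\textbf{The reverse direction.} Suppose $\beta\in\Omega^p(N)$ satisfies $\gamma^N_*\beta = 0$ and $\nabla^{\sigma^N}\beta = 0$, and set $\tilde\beta := \pi^*\beta$. Then $\xi\lrcorner\tilde\beta = 0$ and, since $\beta$ is $\nabla^{g^N}$-related data, $\L_\xi\tilde\beta = 0$, so $\tilde\beta$ is basic. I need $\nabla^\tau_X\tilde\beta = 0$ for all $X$, i.e. $\nabla^g_X\tilde\beta = -(\tau_X)_*\tilde\beta$. Splitting by whether $X$ is horizontal or $X = \xi$: for horizontal $X$, the O'Neill formula expresses $\nabla^g_X\tilde\beta$ as $\pi^*(\nabla^{g^N}_X\beta)$ plus correction terms built from $\xi\wedge(\gamma(X)\lrcorner\tilde\beta)$ coming from the $-\langle\gamma(X),\cdot\rangle\xi$ part of $\nabla^g_X$ on horizontal vectors; the main term is $-(\sigma^N_X)_*\beta$ pulled back, which matches $-(\sigma_X)_*\tilde\beta$, and the remaining terms must reproduce exactly $-(\xi\wedge\gamma)_X{}_*\tilde\beta = -\xi\wedge(\gamma(X)\lrcorner\tilde\beta) - \gamma_X\wedge(\xi\lrcorner\tilde\beta)$, where the second summand vanishes since $\xi\lrcorner\tilde\beta=0$. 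For $X = \xi$: $\nabla^\tau_\xi\tilde\beta = \nabla^g_\xi\tilde\beta + (\tau_\xi)_*\tilde\beta = \nabla^g_\xi\tilde\beta + \gamma_*\tilde\beta$. Since $\tilde\beta$ is basic, $\nabla^g_\xi\tilde\beta = \L_\xi\tilde\beta - (\nabla^g\xi)$-correction; in fact one checks $\nabla^g_\xi\tilde\beta = -\gamma_*\tilde\beta + \L_\xi\tilde\beta$ using $\nabla^g_\xi e_\alpha$ being the infinitesimal holonomy rotation, and this rotation acts on $\tilde\beta$ exactly as $\gamma_*$ up to the Lie-derivative term. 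Hence $\nabla^\tau_\xi\tilde\beta = \L_\xi\tilde\beta = 0$. Actually, rather than unwind $\nabla^g_\xi$ by hand I would use the slicker route: by Lemma \ref{dbeta} applied on $N$, $d\beta = 2\sum(e_\alpha\lrcorner\sigma^N)\wedge(e_\alpha\lrcorner\beta)$, so $d\tilde\beta$ is basic and one gets $\L_\xi\tilde\beta = \xi\lrcorner d\tilde\beta + d(\xi\lrcorner\tilde\beta) = 0$ since both terms vanish; then $\nabla^\tau_\xi\tilde\beta = 0$ reduces to the purely algebraic identity $\gamma_*\tilde\beta = -\nabla^g_\xi\tilde\beta$, i.e. to the compatibility of the connection holonomy along the fiber with $\gamma_* = (\tau_\xi)_*$, which holds because $\xi$ is $\nabla^\tau$-parallel (so $\nabla^\tau_\xi$ annihilates anything built $\nabla^\tau$-parallel-wise along the fiber — but here I must be a bit careful, the cleanest statement is that $\nabla^g_\xi + \gamma_* = \nabla^\tau_\xi$ and on basic forms $\nabla^\tau_\xi = \L_\xi - (\text{something that vanishes})$). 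I expect this fiber-direction bookkeeping to be the fiddly point, and I would organize it by first proving the lemma for functions and 1-forms and then extending multiplicatively, since $\nabla^\tau$ and $\pi^*$ are both algebra derivations/homomorphisms and $\gamma_*$ is a derivation.

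\textbf{The forward direction.} Conversely, assume $\gamma,\sigma$ basic, $\tilde\beta\in\Omega^p(M)$ with $\xi\lrcorner\tilde\beta = 0$, $\gamma_*\tilde\beta = 0$, and $\nabla^\tau\tilde\beta = 0$. First I show $\tilde\beta$ is basic: $\xi\lrcorner\tilde\beta = 0$ by hypothesis, and $\L_\xi\tilde\beta = \xi\lrcorner d\tilde\beta = \xi\lrcorner\big(2\sum(e_i\lrcorner\tau)\wedge(e_i\lrcorner\tilde\beta)\big)$ by Lemma \ref{dbeta}; expanding $\xi\lrcorner$ and using $\xi\lrcorner\tilde\beta = 0$ and $\xi\lrcorner\tau = \gamma$ (together with $e_i\lrcorner\gamma$ being horizontal since $\gamma\in\Omega^2(\D)$), the whole expression collapses to $\pm 2\gamma_*\tilde\beta$ up to terms killed by $\xi\lrcorner\tilde\beta=0$ — hence $\L_\xi\tilde\beta = 0$ exactly because $\gamma_*\tilde\beta = 0$. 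This is precisely why the hypothesis $\gamma_*\tilde\beta = 0$ is needed. So $\tilde\beta$ is basic and descends to $\beta\in\Omega^p(N)$ with $\pi^*\beta = \tilde\beta$; since $\gamma = \pi^*\gamma^N$ and $\gamma_*\tilde\beta = 0$, naturality of the $*$-derivation under pullback gives $\pi^*(\gamma^N_*\beta) = 0$, hence $\gamma^N_*\beta = 0$. Finally $\nabla^{\sigma^N}\beta = 0$: for horizontal $X$ the O'Neill formula gives $\pi^*(\nabla^{g^N}_X\beta) = \nabla^g_X\tilde\beta + (\text{correction in }\xi\wedge(\gamma(X)\lrcorner\tilde\beta))$, and $\nabla^g_X\tilde\beta = -(\tau_X)_*\tilde\beta = -(\sigma_X)_*\tilde\beta - \xi\wedge(\gamma(X)\lrcorner\tilde\beta)$ (again using $\xi\lrcorner\tilde\beta=0$), so the $\xi\wedge(\cdots)$ terms cancel against the correction and $\pi^*(\nabla^{g^N}_X\beta) = -\pi^*((\sigma^N_X)_*\beta)$, i.e. $\nabla^{\sigma^N}_X\beta = 0$ for every horizontal $X$; since $\beta$ lives on $N$ this is all directions.

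\textbf{Main obstacle.} The genuinely delicate part is matching the $\xi$-direction and the horizontal correction terms coming from the non-integrability of $\D$ (the $-\langle\gamma(X),Y\rangle\xi$ terms in O'Neill), and checking that they conspire exactly with the $\xi\wedge\gamma$ part of the torsion decomposition \eqref{taudec}; the hypothesis $\xi\lrcorner\tilde\beta = 0$ is what makes all the dangerous cross-terms vanish, and the hypothesis $\gamma_*\tilde\beta=0$ is what forces $\L_\xi\tilde\beta=0$, i.e. projectability. I would streamline everything by proving the statement first for $p=0,1$ and then invoking that $\nabla^\tau$, $\nabla^{\sigma^N}$ are tensorial derivations compatible with wedge, so both conditions "$\gamma_*\cdot=0$" and "$\nabla\cdot = 0$" propagate from generators to all of $\Lambda^*$, reducing the whole lemma to the 1-form case plus a multiplicative argument.
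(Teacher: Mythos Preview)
Your approach is essentially correct and close to the paper's, but two points deserve comment.

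First, the ``streamlining'' at the end---proving the lemma for $p=0,1$ and then extending multiplicatively---does not work as stated. The hypotheses concern a single $p$-form $\tilde\beta$; there is no reason it should decompose as a wedge of $1$-forms each satisfying $\gamma_*(\cdot)=0$ and $\nabla^\tau(\cdot)=0$. Fortunately you don't need this shortcut, since your direct computation already covers arbitrary $p$.

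Second, the paper handles the $\xi$-direction (where you express uncertainty) more cleanly than via the identity $\nabla^g_\xi = \L_\xi \pm \gamma_*$. Instead, it first records once and for all that for horizontal lifts $\tilde X,\tilde Y_1,\ldots,\tilde Y_p$ one has
\[
(\nabla^\tau_{\tilde X}\tilde\beta)(\tilde Y_1,\ldots,\tilde Y_p)
=\pi^*\big((\nabla^{\sigma^N}_X\beta)(Y_1,\ldots,Y_p)\big),
\]
which is exactly your O'Neill computation. Then for the fibre direction it computes directly $\nabla^\tau_\xi\tilde Y = \nabla^g_\xi\tilde Y + \tau_\xi\tilde Y = \nabla^g_{\tilde Y}\xi + \gamma_*\tilde Y = 2\gamma_*\tilde Y$ (using $[\xi,\tilde Y]=0$), and substitutes into the definition of $\nabla^\tau_\xi\tilde\beta$ evaluated on horizontal lifts to get $(\nabla^\tau_\xi\tilde\beta)(\tilde Y_1,\ldots,\tilde Y_p)=2(\gamma_*\tilde\beta)(\tilde Y_1,\ldots,\tilde Y_p)=0$. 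This sidesteps the bookkeeping you flagged as fiddly, and incidentally shows that your tentative formula $\nabla^g_\xi\tilde\beta = \L_\xi\tilde\beta - \gamma_*\tilde\beta$ has the wrong sign (it should be $+\gamma_*\tilde\beta$, so $\nabla^\tau_\xi\tilde\beta = \L_\xi\tilde\beta + 2\gamma_*\tilde\beta$); the conclusion is unaffected since both terms vanish under the hypotheses.
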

\begin{proof}
We start with a general formula relating the covariant derivatives $\nabla^\tau$ and $\nabla^{\sigma^N}$ in the
Riemannian submersion $\pi : M \rightarrow N$. Let $\beta$ be a $p$-form on $N$.
For any vector fields  $X, Y_1, \ldots Y_p$  on $N$ we denote by $\tilde X, \tilde Y_1, \ldots \tilde Y_p$ their horizontal lifts  to vector fields on $M$
and by $\tilde\beta: = \pi^*\beta$. We then compute:
\begin{equation}\label{general}
\begin{split}
   ( \nabla^\tau_{\tilde X} \tilde\beta )(\tilde Y_1, \ldots, \tilde Y_p)
   =&
   \tilde X(\tilde\beta(\tilde Y_1, \ldots, \tilde Y_p))  
      - \sum_i\tilde\beta(\tilde Y_1, \ldots,\nabla^\tau_{\tilde X }\tilde Y_i, \ldots ,\tilde Y_p)\\
   =&
      \pi^*(X(\beta( Y_1, \ldots, Y_p))) - \sum_i\tilde\beta(\tilde Y_1, \ldots,\nabla^g_{\tilde X }\tilde Y_i
       +\tau_{\tilde X}\tilde Y_i, \ldots ,\tilde Y_p)\\
    =&
       \pi^*(X(\beta(Y_1, \ldots,  Y_p))) - 
       \sum_i\tilde\beta(\tilde Y_1, \ldots,\widetilde{\nabla^{g^N}_X  Y_i}
       + \widetilde{\sigma^N_{X}Y_i}, \ldots ,\tilde Y_p)\\
      =&
     	\pi^*((\nabla^{\sigma^N} \beta)(Y_1, \ldots, Y_p))\ .
\end{split}        
\end{equation}

Let now $\tilde\beta \in \Omega^p(M)$ be $\nabla^\tau$-parallel with 
$\gamma_*\tilde\beta = 0$ and $\xi\lrcorner\tilde\beta=0$. Using Lemma \ref{dbeta} we find
$$
\L_\xi \tilde\beta = \xi \lrcorner d\tilde\beta 
= 2\sum_j \xi \lrcorner(\tau_{e_j} \wedge \tilde\beta_{e_j} )
= -2 \sum_j \gamma_{e_j} \wedge \tilde\beta_{e_j}
= - 2 \gamma_* \tilde\beta
= 0 \ .
$$
Hence $\tilde\beta $ is basic, so we can write $\tilde\beta = \pi^* \beta$
for a $p$-form $\beta \in \Omega^p(N)$. We also have 
$\gamma_*\beta = \pi^*(\gamma^N_*\beta) =0$. From \eqref{general} and the injectivity of $\pi^*$ we conclude that $\nabla^{\sigma^N} \beta = 0$ and $\gamma^N_*\beta =0$.

Conversely, let $\beta \in \Omega^p(N)$ with $\gamma^N_*\beta = 0$ and  $\nabla^{\sigma^N}\beta = 0$. Then by \eqref{general} the pull-back $\tilde\beta:= \pi^* \beta$ satisfies
$(\nabla^\tau_{U_0} \tilde\beta)(U_1, \ldots, U_p) =0$ whenever $U_0, \ldots, U_p$ are horizontal vectors, i.e. orthogonal to $\xi$. Moreover, for every $\U\in \T M$ we have
$$
\xi \lrcorner \nabla^\tau_U \tilde\beta = \nabla^\tau_U(\xi \lrcorner \tilde\beta ) - (\nabla^\tau_U \xi ) \lrcorner \tilde\beta = 0 \ ,
$$
since $\xi\lrcorner\tilde\beta=0$  and $\xi$ is $\nabla^\tau$-parallel. It remains to show that $\nabla^\tau_\xi \tilde\beta$ vanishes when applied to horizontal lifts $\tilde Y_1, \ldots, \tilde Y_p$.  Since a horizontal lift $\tilde Y $ and the vector field
$\xi$ commute, we first obtain  that
$$
\nabla^\tau_\xi \tilde Y = \nabla^g_\xi\tilde Y + \tau_\xi \tilde Y
=  \nabla^g_{\tilde Y}\xi
  + \gamma_* \tilde Y = - \tau_{\tilde Y}\xi + \gamma_*\tilde Y = 2 \gamma_* \tilde Y \ .
$$
Substituting this into the formula for $\nabla^\tau_\xi \tilde\beta$ we find
\begin{align*}
 (\nabla^\tau_\xi \tilde\beta) (\tilde Y_1, \ldots, \tilde Y_p)   
 &=
 \xi(\tilde\beta(\tilde Y_1, \ldots, \tilde Y_p)) -
 \sum_i\tilde\beta (\tilde Y_1, \ldots,\nabla^\tau_\xi \tilde Y_i, \ldots, \tilde Y_p)\\
 &=
 \xi(\pi^*(\beta(Y_1, \ldots, Y_p))) -
 \sum_i\tilde\beta (\tilde Y_1, \ldots, 2\gamma_* \tilde Y_i, \ldots, \tilde Y_p)\\
 &=
 2 (\gamma_* \beta) (\tilde Y_1, \ldots, \tilde Y_p)   = 0
\end{align*}
Combining the three computations above we obtain $\nabla^\tau \tilde\beta =0$.
\end{proof}

We assume from now on that $\nt\tau=0$, i.e. that $\nt$ has {\em parallel} skew-symmetric torsion. Since  $\tau$ and $\xi$ are $\nt$-parallel, the components $\gamma$ and $\sigma$ of $\tau$ defined in \eqref{taudec} are $\nt$-parallel as well.

\begin{elem}\label{txit}
  The action of $\gamma$ (as skew-symmetric endomorphism) on $\tau$ vanishes: $\gamma_*\tau=0$.
\end{elem}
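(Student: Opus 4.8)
The plan is to exploit the fact that $\tau$, $\xi$, and hence the components $\gamma$ and $\sigma$ are all $\nt$-parallel, so it suffices to verify $\gamma_*\tau=0$ at a single point, and in fact it is enough to compute $\gamma_*\tau$ purely algebraically. First I would expand $\gamma_*\tau$ using the decomposition \eqref{taudec}, writing $\gamma_*\tau=\gamma_*(\xi\wedge\gamma)+\gamma_*\sigma=(\gamma_*\xi)\wedge\gamma+\xi\wedge(\gamma_*\gamma)+\gamma_*\sigma$. Since $\gamma=\xi\lrcorner\tau$ lies in $\Omega^2(\D)$ and $\sigma\in\Omega^3(\D)$, the endomorphism $\gamma$ annihilates $\xi$ (because $\gamma(\xi)=\tau_\xi\xi=0$), so $\gamma_*\xi=0$ and the first term drops; moreover $\gamma$ preserves the splitting $\R\xi\oplus\D$, so $\gamma_*$ maps $\Omega^*(\D)$ to itself. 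Thus the claim reduces to showing $\xi\wedge(\gamma_*\gamma)+\gamma_*\sigma=0$, i.e. separately that $\gamma_*\gamma=0$ (which is automatic since $\gamma_*\gamma=[\gamma,\gamma]=0$ for any endomorphism) and that $\gamma_*\sigma=0$.

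So everything comes down to proving $\gamma_*\sigma=0$, and for this I would use the parallelism of the torsion in the form of the first Bianchi identity for $\nt$. Recall that a connection with skew-symmetric torsion satisfies the identity $\mathfrak S_{X,Y,Z}\,\la R^\tau(X,Y)Z,W\ra = \mathfrak S_{X,Y,Z}\,(\nt_X\tau)(Y,Z,W)+2\,\mathfrak S\,\tau(\tau(X,Y),Z,W)$, and when $\nt\tau=0$ the first sum on the right vanishes, leaving the purely algebraic relation $\mathfrak S_{X,Y,Z}\,\tau(\tau_X Y,Z,W)=\tfrac12\,\mathfrak S_{X,Y,Z}\,\la R^\tau(X,Y)Z,W\ra$. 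The left-hand side, after contracting appropriately and using that $R^\tau$ is skew in its last two arguments and in its first pair while being pair-symmetric, is exactly (a multiple of) the expression $\sum_i\tau_{e_i}\wedge(e_i\lrcorner\tau)$ applied to suitable vectors, which encodes $\tau_*\tau$. Alternatively, and more directly, I would apply Lemma \ref{dbeta} to the $\nt$-parallel form $\beta=\xi$: it gives $d\xi=2\sum_i(e_i\lrcorner\tau)\wedge(e_i\lrcorner\xi)=2\sum_i(e_i\lrcorner\tau)\wedge e_i^\flat$, but $d\xi=2\tau_\xi=2\gamma$ by Lemma \ref{xik}, and then differentiate once more or pair with $\sigma$; the cleanest route is to apply Lemma \ref{dbeta} to $\beta=\tau$ itself, giving $0=d\tau-d\tau$... — rather, apply it to $\gamma$, a $\nt$-parallel $2$-form, and extract $\gamma_*\sigma$ from the $\D$-component.

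Concretely, the cleanest argument I would actually carry out: apply Lemma \ref{dbeta} to the $\nt$-parallel $2$-form $\gamma$, obtaining $d\gamma=2\sum_i(e_i\lrcorner\tau)\wedge(e_i\lrcorner\gamma)$. On the other hand, $\gamma=\tfrac12 d\xi$ by \eqref{dxi}, so $d\gamma=0$, giving the identity $\sum_i(e_i\lrcorner\tau)\wedge(e_i\lrcorner\gamma)=0$. Now split the sum over $i$ into the index corresponding to $\xi$ and those in $\D$: the $\xi$-term contributes $\gamma\wedge(\xi\lrcorner\gamma)=0$ since $\xi\lrcorner\gamma=0$, and for the $\D$-indices, using $e_i\lrcorner\tau=e_i\lrcorner(\xi\wedge\gamma)+e_i\lrcorner\sigma=-\xi\wedge(e_i\lrcorner\gamma)+e_i\lrcorner\sigma$ and $e_i\lrcorner\gamma$ as before, one collects the terms with a $\xi$ factor and those without. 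The $\xi$-free part reads $\sum_{i\in\D}(e_i\lrcorner\sigma)\wedge(e_i\lrcorner\gamma)$, and recognizing $\sum_i(e_i\lrcorner\gamma)\wedge(e_i\lrcorner\,\cdot\,)$ as (up to sign) the operator $\gamma_*$ on $3$-forms shows that the $\xi$-component of the identity is precisely $\gamma_*\sigma=0$, while the $\xi\wedge(\cdots)$-component is $\gamma_*\gamma=0$, which holds trivially. Hence $\gamma_*\tau=\xi\wedge\gamma_*\gamma+\gamma_*\sigma=0$.

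The main obstacle is purely bookkeeping: correctly identifying the contraction operator $\sum_i(e_i\lrcorner\gamma)\wedge(e_i\lrcorner\,\cdot\,)$ with $\pm\gamma_*$ acting on forms (the sign and the precise form of the identity $A_*=\sum_i A(e_i)\wedge e_i\lrcorner$ versus $\sum_i(e_i\lrcorner A)\wedge(e_i\lrcorner\,\cdot\,)$ when $A$ is viewed as a $2$-form), and carefully tracking which pieces of the Leibniz expansion carry a $\xi$ wedge factor. Once the operator identifications are set up cleanly — using the formula $A_*=\sum_i A(e_i)\wedge e_i\lrcorner$ from the Preliminaries together with the $\D$-valued-ness of $\gamma$, $\sigma$ — the conclusion is immediate. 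There are no analytic difficulties since the whole statement is algebraic (pointwise), being a consequence of $d\gamma=\tfrac12 d(d\xi)=0$ together with $\nt$-parallelism.
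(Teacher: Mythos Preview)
Your proof is correct and rests on exactly the same idea as the paper's: since $\gamma=\tfrac12 d\xi$ is $\nt$-parallel, Lemma~\ref{dbeta} applied to $\gamma$ yields $0=d\gamma=2\sum_i(e_i\lrcorner\tau)\wedge(e_i\lrcorner\gamma)$, and this sum is identified with $\gamma_*\tau$. The paper, however, reaches the conclusion in a single line by observing directly that $\sum_i(e_i\lrcorner\tau)\wedge(e_i\lrcorner\gamma)=\sum_i\gamma(e_i)\wedge(e_i\lrcorner\tau)=\gamma_*\tau$, without any need for the preliminary decomposition $\gamma_*\tau=\xi\wedge\gamma_*\gamma+\gamma_*\sigma$, the reduction to $\gamma_*\sigma=0$, or the subsequent splitting of the sum into $\xi$- and $\D$-components; your detour through the Bianchi identity in the second paragraph is also unnecessary. (A small expository slip: at the end of your third paragraph you refer to the $\xi$-free part $\sum_{i\in\D}(e_i\lrcorner\sigma)\wedge(e_i\lrcorner\gamma)=\gamma_*\sigma$ as ``the $\xi$-component of the identity''---the labels are swapped, though the conclusion is unaffected.)
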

\begin{proof}
Since  $\gamma$ is $\nt$-parallel, we compute using Lemma \ref{dbeta}:
   $$ 0=d(d\xi)=2d(\tau_\xi)=2d\gamma=4\sum_i ({e_i}\lrcorner\tau)\wedge({e_i}\lrcorner\gamma)=4\sum_i\gamma( e_i)\wedge({e_i}\lrcorner\tau)=4\gamma_*\tau\ .
  $$
\end{proof}

By Lemma \ref{txit}, together with the fact that $\gamma_*\xi=0$ and $\gamma_*\gamma=0$, we get 
\begin{equation}\label{ts}
    0=\gamma_*\tau=\gamma_*(\xi\wedge\gamma+\sigma)=\gamma_*\sigma\ .
\end{equation}

\begin{elem}\label{Lie} The forms $\gamma$ and $\sigma$ are basic.
\end{elem}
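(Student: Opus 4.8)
A form $\beta$ on $M$ with $\xi\lrcorner\beta=0$ is basic precisely when $\L_\xi\beta=0$, so I must show $\L_\xi\gamma=0$ and $\L_\xi\sigma=0$. The first one is automatic: since $\gamma=\xi\lrcorner\tau$ and $\tau,\xi$ are both $\nt$-parallel, $\gamma$ is $\nt$-parallel, and by definition $\xi\lrcorner\gamma=\xi\lrcorner\xi\lrcorner\tau=0$; applying Lemma \ref{dbeta} to the $\nt$-parallel form $\gamma$ gives $\L_\xi\gamma=\xi\lrcorner d\gamma=2\sum_j\xi\lrcorner(\tau_{e_j}\wedge\gamma_{e_j})=-2\sum_j\gamma_{e_j}\wedge\gamma_{e_j}=-2\gamma_*\gamma=0$, since $\gamma_*\gamma=[\gamma,\gamma]=0$.

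For $\sigma$ the argument is the same template but uses the identity \eqref{ts}. The form $\sigma=\tau-\xi\wedge\gamma$ is $\nt$-parallel (difference of two $\nt$-parallel forms) and satisfies $\xi\lrcorner\sigma=0$ by construction. Applying Lemma \ref{dbeta} to $\sigma$ and then contracting with $\xi$, exactly as in the computation of $\L_\xi\tilde\beta$ inside the proof of Lemma \ref{par}, I get
\[
\L_\xi\sigma=\xi\lrcorner d\sigma=2\sum_j\xi\lrcorner(\tau_{e_j}\wedge\sigma_{e_j})=-2\sum_j\gamma_{e_j}\wedge\sigma_{e_j}=-2\gamma_*\sigma=0,
\]
where the last equality is precisely \eqref{ts}. (One uses here that $\xi\lrcorner\tau_{e_j}=-\tau_{e_j}\xi=-\tau(e_j,\xi,\cdot)=\gamma(e_j,\cdot)=\gamma_{e_j}$ and $\xi\lrcorner\sigma_{e_j}=0$, so the only surviving term of $\xi\lrcorner(\tau_{e_j}\wedge\sigma_{e_j})$ is $-\gamma_{e_j}\wedge\sigma_{e_j}$, noting the sign from moving $\xi\lrcorner$ past the $2$-form $\tau_{e_j}$.) Hence both $\gamma$ and $\sigma$ are basic.

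There is no real obstacle here: the lemma is a direct application of Lemma \ref{dbeta} combined with the two vanishing identities $\gamma_*\gamma=0$ and $\gamma_*\sigma=0$ already established (the latter in \eqref{ts}, itself a consequence of Lemma \ref{txit}). The only point requiring a modicum of care is bookkeeping the signs when contracting a wedge product of a $1$-form, a $2$-form and a $p$-form with $\xi$, and noting that $\xi\lrcorner\tau_{e_j}=\gamma_{e_j}$.
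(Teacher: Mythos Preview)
Your proof is correct and essentially identical to the paper's: apply Lemma~\ref{dbeta} to the $\nt$-parallel horizontal forms $\gamma$ and $\sigma$, contract with $\xi$ via Cartan's formula, and conclude using $\gamma_*\gamma=[\gamma,\gamma]=0$ and $\gamma_*\sigma=0$ from \eqref{ts}. One cosmetic slip in your sign bookkeeping: with the paper's convention $g(\tau_X Y,Z)=\tau(X,Y,Z)$ one actually has $\xi\lrcorner\tau_{e_j}=\tau(e_j,\xi,\cdot)=-\gamma_{e_j}$ (not $+\gamma_{e_j}$), and the Leibniz rule carries no sign on the \emph{first} factor, so the minus in $-\gamma_{e_j}\wedge\sigma_{e_j}$ comes from the contraction itself rather than from ``moving $\xi\lrcorner$ past the $2$-form''; these two confusions cancel and the conclusion is unaffected.
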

\begin{proof}
  The forms $\gamma$ and $\sigma$ are $\nt$-parallel and horizontal, i.e. satisfy $\xi\lrcorner\gamma=0$ and $\xi\lrcorner\sigma=0$. Using Lemma \ref{dbeta} and the Cartan formula we compute 
  $$\L_\xi\gamma=d(\xi\lrcorner\gamma)+\xi\lrcorner d\gamma=2\xi\lrcorner\sum_i (e_i\lrcorner\tau)\wedge(e_i\lrcorner\gamma)=2\sum_i (\xi\lrcorner e_i\lrcorner\tau)\wedge(e_i\lrcorner\xi\lrcorner\tau)=0\ ,$$
  and similarly, using also \eqref{ts} together with the fact that $\tau_\xi=\gamma$, we get:
   $$\L_\xi\sigma=d(\xi\lrcorner\sigma)+\xi\lrcorner d\sigma=2\xi\lrcorner\sum_i (e_i\lrcorner\tau)\wedge(e_i\lrcorner\sigma)=2\sum_i (\xi\lrcorner e_i\lrcorner\tau)\wedge(e_i\lrcorner\sigma)=-2(\tau_\xi)_*\sigma=0\ .$$
\end{proof}
For any vector field $X$ on $N$ we denote as before by $\tilde X$ its horizontal lift to $M$. 
\begin{elem}\label{ntg}
    For every $X,Y\in\Gamma(\T N)$ the following relation holds:
    \begin{equation}
        \nabla^{\tau}_{\tilde X}\tilde Y=\widetilde{\nabla^{\sigma^N}_XY}
    \end{equation}
\end{elem}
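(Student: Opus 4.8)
The plan is to deduce the formula $\nabla^{\tau}_{\tilde X}\tilde Y=\widetilde{\nabla^{\sigma^N}_XY}$ directly from the computation already carried out inside the proof of Lemma \ref{par}. Recall that in that proof, while treating horizontal lifts, we established the intermediate identity
$$\nabla^g_{\tilde X}\tilde Y=\widetilde{\nabla^{g^N}_XY}+\tilde{(\text{vertical term})}\ ,$$
more precisely that $\nabla^\tau_{\tilde X}\tilde Y$ projects horizontally to $\widetilde{\nabla^{g^N}_XY}+\widetilde{\sigma^N_XY}=\widetilde{\nabla^{\sigma^N}_XY}$, using O'Neill's formulas for a Riemannian submersion together with the decomposition $\tau=\xi\wedge\gamma+\sigma$ and the fact that $\sigma$ is basic (Lemma \ref{Lie}). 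So the horizontal component of $\nabla^\tau_{\tilde X}\tilde Y$ is exactly $\widetilde{\nabla^{\sigma^N}_XY}$, and the only thing left to prove is that $\nabla^\tau_{\tilde X}\tilde Y$ has \emph{no} vertical component, i.e. that $g(\nabla^\tau_{\tilde X}\tilde Y,\xi)=0$.

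First I would compute this vertical component explicitly. Write $g(\nabla^\tau_{\tilde X}\tilde Y,\xi)=g(\nabla^g_{\tilde X}\tilde Y,\xi)+g(\tau_{\tilde X}\tilde Y,\xi)$. For the Levi-Civita part, since $\xi$ is Killing with $\nabla^g\xi=\tau_\xi$ (Lemma \ref{xik}), we get $g(\nabla^g_{\tilde X}\tilde Y,\xi)=-g(\tilde Y,\nabla^g_{\tilde X}\xi)=-g(\tilde Y,\tau_\xi\tilde X)=-\tau(\xi,\tilde X,\tilde Y)$. For the torsion part, $g(\tau_{\tilde X}\tilde Y,\xi)=\tau(\tilde X,\tilde Y,\xi)=\tau(\xi,\tilde X,\tilde Y)$. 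Adding the two contributions gives $g(\nabla^\tau_{\tilde X}\tilde Y,\xi)=0$, so $\nabla^\tau_{\tilde X}\tilde Y$ is horizontal. Combining with the previous paragraph, $\nabla^\tau_{\tilde X}\tilde Y=\widetilde{\nabla^{\sigma^N}_XY}$, as claimed.

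The main (mild) obstacle is bookkeeping: one has to make sure the identification of the horizontal component with $\widetilde{\nabla^{\sigma^N}_XY}$ is done cleanly. This is where the hypothesis $\nabla^\tau\tau=0$ (hence $\gamma$ and $\sigma$ basic, by Lemma \ref{Lie}) enters crucially, because it guarantees that $\sigma$ descends to a well-defined form $\sigma^N$ on $N$ with $\sigma=\pi^*\sigma^N$, and that the term $\tau_{\tilde X}\tilde Y$ restricted to $\D$ equals precisely $\widetilde{\sigma^N_XY}$ (the $\xi\wedge\gamma$ piece of $\tau$ only produces vertical contributions when paired against horizontal vectors, and those have just been shown to cancel against the O'Neill term $[\tilde X,\tilde Y]^{\text{vert}}$). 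Alternatively, and perhaps most economically, one can simply invoke \eqref{general}: that formula, valid for any $p$-form $\beta$ on $N$, encodes exactly the statement that $\pi$ intertwines $\nabla^\tau$ on basic forms with $\nabla^{\sigma^N}$; specializing the computation of $\nabla^g_{\tilde X}\tilde Y$ already performed there and using the vertical-component cancellation above yields \eqref{ntg} at once.
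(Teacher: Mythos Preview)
Your proof is correct and follows essentially the same approach as the paper's own proof: both use O'Neill's formula for $\nabla^g_{\tilde X}\tilde Y$ together with the decomposition $\tau_{\tilde X}\tilde Y=g(\tau_\xi\tilde X,\tilde Y)\xi+\sigma_{\tilde X}\tilde Y$ and observe that the vertical contributions cancel. The only organizational difference is that the paper performs the whole computation directly in three lines rather than splitting it into ``horizontal part via \eqref{general}'' plus ``vertical part vanishes''; in particular the paper does not invoke the proof of Lemma~\ref{par} but simply writes $\nabla^{g}_{\tilde X}\tilde Y=\widetilde{\nabla^{g^N}_XY}-g(\tilde Y,\tau_\xi{\tilde X})\xi$ from O'Neill and \eqref{nx}, then adds $\tau_{\tilde X}\tilde Y$.
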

\begin{proof}
    The usual formula for Riemannian submersions together with \eqref{nx} give
$$\nabla^{g}_{\tilde X}\tilde Y=\widetilde{\nabla^{g^N}_XY}+g(\nabla^{g}_{\tilde X}\tilde Y,\xi)\xi=\widetilde{\nabla^{g^N}_XY}-g(\tilde Y,\nabla^{g}_{\tilde X}\xi)\xi=\widetilde{\nabla^{g^N}_XY}-g(\tilde Y,\tau_\xi{\tilde X})\xi\ ,$$
whence using that $\tau_{\tilde X}\tilde Y=g(\tau_\xi{\tilde X},\tilde Y)\xi+\sigma_{\tilde X}\tilde Y$
$$\widetilde{\nabla^{\sigma^N}_XY}=\widetilde{\nabla^{g^N}_XY}+\widetilde{\sigma^N_XY}=\nabla^{g}_{\tilde X}\tilde Y+g(\tau_\xi{\tilde X},\tilde Y)\xi+\sigma_{\tilde X}\tilde Y=\nabla^{\tau}_{\tilde X}\tilde Y\ .$$
\end{proof}

Using this, together with 
$$g([\widetilde X,\widetilde Y],\xi)=-d\xi(\widetilde X,\widetilde Y)=-2\gamma(\widetilde X,\widetilde Y)\ ,$$
which follows from Lemma \ref{xik}, and 
$$\nabla^\tau_\xi\tilde Z=\nabla^g_\xi\tilde Z+\tau_\xi\tilde Z=\nabla^g_{\tilde Z}\xi+\tau_\xi\tilde Z=-\tau_{\tilde Z}\xi+\tau_\xi\tilde Z=2\gamma(\tilde Z)\ ,$$
which follows from $[\xi,\tilde Z]=0$ and $\tau_\xi=\gamma$,
we readily compute the relation between the curvature tensors $R^\tau$ of $\nabla^\tau$ and $R^{\sigma^N}$ of $\nabla^{\sigma^N}$:
\begin{equation*}\begin{split}\label{rrgen} R^{\tau}_{\tilde X,\tilde Y}\tilde Z=&\nabla^\tau_{\tilde X}\nabla^\tau_{\tilde Y}{\tilde Z}-\nabla^\tau_{\tilde Y}\nabla^\tau_{\tilde X}{\tilde Z}-\nabla^\tau_{[\tilde X,\tilde Y]}\tilde Z\\
=&\widetilde{\nabla^{\sigma^N}_{X}\nabla^{\sigma^N}_{Y}{Z}}-\widetilde{\nabla^{\sigma^N}_{Y}\nabla^{\sigma^N}_{X}{Z}}-\nabla^\tau_{\widetilde{[X,Y]}-2\gamma(\widetilde X,\widetilde Y)\xi}\tilde Z\\
=&\widetilde{R^{\sigma^N}_{X,Y}Z}+4\gamma(\widetilde X,\widetilde Y)\gamma(\tilde Z)\ .
\end{split}
\end{equation*}
By considering $R^\tau$ and $R^{\sigma^N}$ as symmetric endomorphisms of $\Lambda^2(\T M)$ and $\Lambda^2(\T N)$ respectively, the above equality reads:
\be\label{rpb}R^{\tau}(\pi^*\beta)=\pi^*(R^{\sigma^N}(\beta))+4\alpha^2 \la\beta,\omega^N\ra_{\Lambda^2\T N}\pi^*\omega^N\ ,\ee
for every $\beta\in\Omega^2(N)$.

\section{$\G_2$-structures with torsion}\label{s4}

As explained in the introduction, our main objective is to classify $7$-dimensional Riemannian manifolds $(M,g)$ admitting a connection $\nabla^\tau$ with parallel skew-symmetric torsion whose holonomy is contained in $\G_2$. 

We will denote by $\hol$ the holonomy algebra of $\nabla^{\tau}$, identified at each point $x\in M$  with a Lie subalgebra of $\End^-(\T_x M)$. The isomorphism class of the representation of $\hol$ on $\T_x M$ does not depend on $x$ so from now on we will not specify $x$ anymore.

 We will divide our study into three different cases, according to the dimension of trivial summand of $\hol$, i.e. the space of $\nabla^\tau$-parallel vector fields, called $\Par (\nabla^\tau)$.

 If $\Par (\nabla^\tau)\ne0$, one can use the $\nabla^\tau$-parallel vector fields in order to obtain a dimensional reduction. This will be done in this section for the case $\dim(\Par (\nabla^\tau))=1$ and in Section \ref{s5} for the case $\dim(\Par (\nabla^\tau))\ge 2$.  The case $\Par (\nabla^\tau)=0$ will be
 treated in Section \ref{xxx}.

Assume for the remaining part of this section that the space of $\nabla^\tau$-parallel
vector fields is one-dimensional, spanned by a unit vector field $\xi$.  According to the orthogonal splitting $\T M = \R \xi \oplus\D$ we can write
\begin{equation}\label{tau-phi}
\tau = \xi \wedge \gamma + \sigma 
\qquad\text{and}\qquad
\varphi = \xi \wedge \omega +  \psi \ ,
\end{equation}
where $\gamma, \omega \in \Omega^2(\D)$ and  $\sigma, \psi \in \Omega^3(\D)$ and
all these forms are $\nabla^\tau$-parallel. By Lemma \ref{g2su3}, 
$(g|_\D,\omega,\psi)$ defines an $\SU(3)$-structure on $\D_x$ at each point $x\in M$. 

Lemma \ref{gen} gives the decomposition of the exterior powers $\Lambda^k \D$
into $\SU(3)$-irreducible summands, e.g. we have the $\nt$-parallel decomposition 
 $\Lambda^2 \D = \Lambda^{(2,0) + (0,2)}\D \oplus \Lambda^{(1,1)}_0\D \oplus \R \omega$, 
 where the first summand is isomorphic to $\D$ by the $\nabla^\tau$-parallel isomorphism $X \mapsto X \lrcorner \psi$.

\begin{elem}\label{g11}
The $2$-form $\gamma \in \Omega^2(\D)$ defined in \eqref{tau-phi} belongs to
$\Omega^{(1,1)}(\D)$.
\end{elem}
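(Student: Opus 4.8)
The goal is to show that the $\nabla^\tau$-parallel $2$-form $\gamma=\xi\lrcorner\tau$ has no component in $\R\omega$ nor in $\Lambda^{(2,0)+(0,2)}\D$, i.e. that it lies in $\Omega^{(1,1)}(\D)$. My plan is to exploit the constraint $\gamma_*\sigma=0$ from \eqref{ts} (equivalently Lemma \ref{txit}), together with the fact that $\gamma$, $\sigma$ and $\psi$ are all $\nabla^\tau$-parallel, so that every identity can be checked algebraically at a single point using the $\SU(3)$-representation theory of Lemma \ref{gen}. First I would decompose $\gamma=\lambda\omega+\gamma_0+\gamma_2$ according to $\Lambda^2\D=\R\omega\oplus\Lambda^{(1,1)}_0\D\oplus\Lambda^{(2,0)+(0,2)}\D$, where $\lambda\in\R$, $\gamma_0\in\Lambda^{(1,1)}_0\D$, and $\gamma_2\in\Lambda^{(2,0)+(0,2)}\D$, and show $\lambda=0$ and $\gamma_2=0$; the remaining summand $\gamma_0$ is precisely the $(1,1)$-part, and since $\Omega^{(1,1)}(\D)=\R\omega\oplus\Omega^{(1,1)}_0(\D)$, showing $\gamma_2=0$ alone already gives the claim if one adopts the convention that $\Omega^{(1,1)}$ includes the $\R\omega$ summand. (I expect that in this paper's conventions $\Omega^{(1,1)}(\D)$ does include $\R\omega$, so the real content is killing $\gamma_2$; but I will treat both pieces.)

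For the $\R\omega$-component, the key is that $\sigma\in\Lambda^{(3,0)+(0,3)}\D$, so by Lemma \ref{gen}(1) we have $\omega_*\sigma=J_*\sigma=3(*\sigma)\ne 0$ (here I would note $\sigma$ cannot be assumed nonzero; if $\sigma=0$ there is nothing to do for this part). Actually the cleaner route is: the condition $\gamma_*\tau=0$ of Lemma \ref{txit} can be split using $\tau=\xi\wedge\gamma+\sigma$ and $\gamma_*\xi=0$, $\gamma_*\gamma=0$, giving $\gamma_*\sigma=0$ directly as in \eqref{ts}. Now I apply $\gamma_*=\lambda\omega_*+(\gamma_0)_*+(\gamma_2)_*$ to $\sigma$. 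By Lemma \ref{gen}(4), $(\gamma_0)_*\sigma=0$ since $\gamma_0\in\Lambda^{(1,1)}_0\D$ (note $\sigma\in\R\psi\oplus\R(*\psi)$ up to the identification $\Lambda^{(3,0)+(0,3)}$, and Lemma \ref{gen}(4) states $\alpha_*\psi=\alpha_*(*\psi)=0$). Thus $0=\gamma_*\sigma=\lambda\omega_*\sigma+(\gamma_2)_*\sigma=3\lambda(*\sigma)+(\gamma_2)_*\sigma$. Since $\omega_*$ preserves the type decomposition of $\Lambda^3\D$ (it is $J_*$, which acts as a scalar on each $\Lambda^{(p,q)+(q,p)}$ piece) while $(\gamma_2)_*$, with $\gamma_2$ anti-commuting with $J$, maps $\Lambda^{(3,0)+(0,3)}\D$ into its orthogonal complement inside $\Lambda^3\D$ — this is the step I'd need to verify carefully, using the type grading on forms — the two terms $3\lambda(*\sigma)$ and $(\gamma_2)_*\sigma$ live in orthogonal subspaces, hence both vanish. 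From $\lambda(*\sigma)=0$ one concludes either $\lambda=0$ or $\sigma=0$, and in the latter case $\gamma$ being the entire torsion's non-$\xi\wedge\gamma$ structure needs a separate short argument (e.g. then $\tau=\xi\wedge\gamma$ and parallelism forces $\lambda\omega$ to be parallel, which is fine, but one still has to rule it out — perhaps via $d\gamma=2\gamma_*\tau=0$ already used, or by the closedness/structure of $\varphi$). From $(\gamma_2)_*\sigma=0$ and the fact (which I would extract from Lemma \ref{gen}(3), the isomorphism $X\mapsto X\lrcorner\psi$ between $\D$ and $\Lambda^{(2,0)+(0,2)}\D$, combined with a nondegeneracy computation) that $\alpha\mapsto\alpha_*\sigma$ is injective on $\Lambda^{(2,0)+(0,2)}\D$ when $\sigma\ne0$, I get $\gamma_2=0$.

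The main obstacle I anticipate is the representation-theoretic bookkeeping in the middle step: showing that $\omega_*\sigma$ and $(\gamma_2)_*\sigma$ sit in orthogonal (in fact, different-type) summands of $\Lambda^3\D$, and that $(\gamma_2)_*$ is injective on the relevant space. This is entirely a linear-algebra fact about the $\SU(3)$-module $\Lambda^*\R^6$, provable by fixing the standard basis of Lemma \ref{op} and computing $\gamma_{2*}\psi$ for $\gamma_2=e^{13}+e^{24}$ (say), but it is the place where a sign or type error would break the argument. A cleaner alternative, which I would try first, is to avoid decomposing $\gamma$ and instead observe that $\gamma$ is $\nabla^\tau$-parallel, hence $\hol$-invariant, hence commutes with every element of $\hol\subset\su(3)$; one can then use that the $\SU(3)$-invariant elements of $\Lambda^2\D$ under the full stabilizer are just $\R\omega$, but since $\hol$ may be smaller this does not immediately finish — so the computational route via $\gamma_*\sigma=0$ seems unavoidable, and that is where the real work lies.
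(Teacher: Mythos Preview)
Your approach has a genuine gap: you repeatedly use that $\sigma\in\Lambda^{(3,0)+(0,3)}\D$ (e.g.\ when invoking Lemma~\ref{gen}(4) to kill $(\gamma_0)_*\sigma$, and when asserting $\omega_*\sigma=3(*\sigma)$), but this is \emph{not} established at this point of the paper and is not true in general. The form $\sigma$ is merely the horizontal part of the torsion $\tau$; only later, in \S\ref{s6}, does the paper show $\sigma\in\Lambda^{(3,0)+(0,3)}\D$, and only under the additional assumption $\hol=\su(3)$. Without this type constraint on $\sigma$, the identity $\gamma_*\sigma=0$ carries essentially no information about the type of $\gamma$.

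More fundamentally, you never use the running hypothesis of this section, namely $\dim(\Par(\nabla^\tau))=1$. The lemma is actually \emph{false} without it: in the setting of Section~\ref{s5} (where $\dim(\Par(\nabla^\tau))\ge 2$), taking $\xi=\xi_1$ one computes $\gamma=a\,\xi_2\wedge\xi_3+\gamma_1$ with $\gamma_1\in\Omega^+(\H)$, and the $(2,0)+(0,2)$ part of $\gamma$ with respect to $J$ vanishes iff $[\beta_1,\gamma_1]=0$, which fails for generic choices (cf.\ \S5.3). Since $\gamma_*\sigma=0$ holds in that situation as well, no argument based solely on \eqref{ts} can succeed.

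The paper's proof is a one-liner exploiting precisely the hypothesis you omitted. By Lemma~\ref{gen}(3), the map $X\mapsto X\lrcorner\psi$ is a $\nabla^\tau$-parallel isomorphism $\D\to\Lambda^{(2,0)+(0,2)}\D$. Hence the $(2,0)+(0,2)$-component of the $\nabla^\tau$-parallel form $\gamma$ equals $X\lrcorner\psi$ for some $\nabla^\tau$-parallel $X\in\Gamma(\D)$. Since $\Par(\nabla^\tau)=\R\xi$ and $X\perp\xi$, one gets $X=0$, i.e.\ $\gamma\in\Omega^{(1,1)}(\D)$.
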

\begin{proof}
    According to the above $\SU(3)$-decomposition of $\Lambda^2 \D$, we can  write $\gamma = X \lrcorner \psi + \gamma_0 + c \omega$, for some $\nt$-parallel vector field  $X\in\Gamma(\D)$, $\nt$-parallel 2-form $\gamma_0\in \Omega^{(1,1)}_0(\D)$ and constant $c\in\R$. But $\Par (\nabla^\tau)=\R\xi$, and $X$ is orthogonal to $\xi$, whence $X=0$. This shows that $\gamma\in \Omega^{(1,1)}(\D)$.
    \end{proof}

By Lemma \ref{Lie}, the forms $\gamma$ and $\sigma$ are basic. Moreover, $\omega$ is $\nabla^\tau$-parallel and $\gamma_* \omega=0$ by Lemma \ref{g11}. Consequently, 
Lemma \ref{par} shows that $\omega$ is basic as well.

Consider the standard decomposition $\T M=\V\oplus\H$ described in \S\ref{CMS} (see also \cite[Def. 3.5]{CMS21}) and assume that $(M,g,\tau)$ is not an Ambrose-Singer manifold (in the sense that the curvature tensor $R^\tau$ of $\nt$ is not $\nt$-parallel). Then $\H\ne 0$ by \cite[Prop. 3.13]{CMS21} and $\H\subset \D$ by Remark \ref{par-vf}. 
We denote by $J$ the $\nt$-parallel complex structure on $\D$ defined by $\omega(\cdot,\cdot)=g(J\cdot,\cdot)$.

\begin{elem}\label{hrep}
    The representation of the holonomy algebra $\hol$ on $\H$ is $J$-invariant, irreducible, and $\dim(\H)\ge 4$. 
 \end{elem}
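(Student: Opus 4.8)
The plan is to analyze the holonomy representation on $\H$ using the three structural constraints we have: $\H$ is $\nt$-parallel and contained in $\D$, the complex structure $J$ on $\D$ is $\nt$-parallel, and $\H$ is by definition a sum of $\hol$-irreducible summands each of which carries an element of $\hol$ acting non-trivially on it and trivially on its complement. First I would prove $J$-invariance of $\H$: since $J$ commutes with the $\nt$-holonomy (being $\nt$-parallel) and $\H$ is a holonomy-invariant subspace, $J\H$ is again holonomy-invariant; because $\H$ is a sum of those irreducible summands admitting a ``localized'' element of $\hol$, and this property is preserved under the holonomy-equivariant isomorphism $J$, we get $J\H=\H$. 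In particular $\H$ carries a complex structure, so $\dim_\R\H$ is even.

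Next I would rule out $\dim\H=2$. If $\H$ were a $2$-plane, it would be a complex line, and the holonomy would act on it through a subalgebra of $\u(1)\cong\so(2)$. But the whole point of the standard decomposition is that $\H$ consists of summands on which some element of $\hol$ acts non-trivially and trivially on the orthogonal complement; such an element restricted to $\H$ would be a nonzero element of $\so(2)$, i.e. (up to scale) $J|_\H$ itself. The obstruction is then to show this is incompatible with $\hol\subset\gg_2$: the key fact is that $\gg_2$ acts irreducibly on $\R^7$, hence contains no element annihilating a $5$-dimensional subspace. More precisely, an element of $\gg_2\subset\so(7)$ that acts as a rotation generator on a $2$-plane and trivially on the orthogonal $5$-plane would have rank $2$; but one checks (e.g. from Lemma \ref{tran}, or directly from the structure of $\gg_2$) that $\gg_2$ contains no rank-$2$ elements — every nonzero element of $\gg_2$ has rank $\ge 4$, since its centralizer in $\gg_2$ cannot be too large (the generic stabilizer of a vector is $\su(3)$, and a rank-$2$ element would commute with an $\su(2)\oplus\R^?$ that is too big). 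This rank argument is the crux and the step I expect to be the main obstacle; it should be extracted cleanly from the already-cited representation theory of $\gg_2$ rather than computed by hand.

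Finally, for irreducibility: suppose $\H$ decomposes into two or more $\hol$-irreducible, $J$-invariant summands $\H=\H_1\oplus\H_2\oplus\cdots$. Each $\H_j$ has even real dimension $\ge 4$ (by the rank argument just above applied to each summand), so $\dim\H\ge 8>7$, a contradiction since $\H\subset\D$ has $\dim\D=6$. Hence $\H$ is irreducible and, being even-dimensional and at least $4$-dimensional while contained in the $6$-dimensional space $\D$, we have $\dim\H\in\{4,6\}$, which gives in particular $\dim\H\ge 4$. I would assemble these three observations — $J$-invariance, the rank-$\ge 4$ lower bound from $\gg_2$, and the dimension count inside $\D$ — into the short proof; the only genuinely non-formal input is the rank bound for elements of $\gg_2$, everything else is bookkeeping with the standard decomposition and the $\nt$-parallelism already established.
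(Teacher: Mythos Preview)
Your overall architecture matches the paper's, but the step excluding $2$-dimensional summands diverges and contains a gap. You reduce it to the claim that $\gg_2\subset\so(7)$ has no rank-$2$ elements. That claim is true (the nonzero weights of the $7$-dimensional representation are the six short roots, and no nonzero Cartan element kills four of them), but your proposed justifications do not establish it: irreducibility of $\gg_2$ on $\R^7$ does \emph{not} forbid rank-$2$ elements (compare $\so(n)$ on $\R^n$), and Lemma~\ref{tran} on transitivity over orthonormal pairs gives no rank bound without further argument. So as written the key step is asserted, not proved.

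The paper avoids this detour by using the sharper inclusion $\hol\subset\su(3)$ rather than $\hol\subset\gg_2$: since $\nabla^\tau$ preserves $\xi$ and the $\SU(3)$-structure $(\omega,\psi)$ on $\D$, one has $\hol\subset\{A\in\End^-(\D):[A,J]=0,\ \tr(AJ)=0\}$. The paper first shows each $\H_\alpha$ (not just $\H$) is $J$-stable, by noting that the $\hol$-equivariant projection $\pi_\alpha\circ J|_{\H_\alpha}$ is either zero or embeds $\H_\alpha$ into $\H_\alpha^\perp$ as a subspace on which $A_\alpha$ still acts non-trivially---impossible. Then if $\dim\H_\alpha=2$, the localized element $A_\alpha$ restricts to a nonzero multiple of $J|_{\H_\alpha}$ and vanishes on $\H_\alpha^\perp$, forcing $\tr(A_\alpha J)\ne 0$, contradicting $A_\alpha\in\su(3)$. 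This one-line trace argument replaces your rank analysis in $\gg_2$; it is both shorter and self-contained with the material already on the page.
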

\begin{proof}
    We decompose $\H=\oplus_\alpha \H_\a$ in irreducible $\hol$ representations as in \cite[Def. 3.3]{CMS21}. By definition, for every $\a$, there exists an element $A_\alpha\in \hol$ acting non-trivially on $\H_\a$ and trivially on $\H_\alpha^\perp$. We denote by $\pi_\alpha:\T M\to\H_\alpha^\perp$ the orthogonal projection, which is clearly $\hol$-equivariant. By irreducibility, $(\pi_\alpha\circ J)|_{\H_\alpha}$ is either zero, or injective. In the latter case $(\pi_\alpha\circ J)(\H_\alpha)$ is a $\hol$-invariant subspace of $\H_\alpha^\perp$ on which $A_\alpha$ acts non-trivially, which is a contradiction. Thus 
$\pi_\alpha\circ J=0$, so $J(\H_\a)=\H_\a$.

    Consequently $\H_\a$ is $J$-invariant for every $\a$. Assume that there exists $\a$ with $\dim(\H_\a)=2$. The corresponding element $A_\a\in\hol$ restricted to $\H_\a$ is then equal to a non-zero multiple of $J|_{\H_\a}$. Then $\tr(A_\alpha J)\ne0$, contradicting the fact that $$\hol\subset\su(3)=\{A\in \End^-(\D)\ |\ [A,J]=0,\ \tr(AJ)=0\}\ .$$

    This shows that $\dim(\H^\alpha)\ge 4$ for every $\a$, and since $\dim(\H)\le 7$, the conclusion follows.
\end{proof}

According to Lemma \ref{hrep}, the possible dimensions of $\H$ are 4 and 6. We will treat the two cases separately in the next two subsections.

\subsection{The case $\dim(\H)=6$}\label{s6} Since the restriction of the standard representation of $\su(3)$ on $\R^6$ to any strict Lie subalgebra is reducible, we must have $\hol\simeq\su(3)$. Consequently, $\gamma$ and $\sigma$ are $\su(3)$-invariant. The trivial summand of the $\su(3)$ representation on $\Lambda^2\R^6$ is spanned by the canonical 2-form and the trivial summand of the $\su(3)$ representation on $\Lambda^3\R^6$ is $\Lambda^{(3,0)+(0,3)}\R^6$. We thus conclude that the components of $\tau$ can be written as $\gamma=\alpha\omega$ and $\sigma=a\psi+b(*_\D\psi)$ for some real constants $a,b,\alpha$. By \eqref{ts} we have 
$$0=\tau_\xi\sigma=\gamma_*\sigma=\alpha J_*\sigma\ .$$
As $J$ acts injectively on $\Lambda^{(3,0)+(0,3)}\R^6$, we must have either $\alpha=0$, or $\alpha \ne 0$ and $\sigma=0$. We treat the two cases separately.

\subsubsection{The case $\alpha=0$} In this case we have $\gamma=0$. In particular, this implies that $\n^g_X\xi=\nt_X\xi-\tau_X\xi=\tau_\xi X=\gamma(X)=0$ for every tangent vector $X\in \T M$, i.e. $\xi$ is $\nabla^g$-parallel, whence $(M,g)$ is locally isometric to a Riemannian product $\R\times (N,g^N)$. We will now gather more information about the structure of $(N,g^N)$.

By Lemma \ref{par}, every $\nt$-parallel form on $M$ which vanishes on $\xi$ is basic. 
 In particular, the forms $\sigma, \omega$ and $\psi$ are projectable to exterior forms on $N$, which we will denote by $\sigma^N, \omega^N$ and $\psi^N$. These forms are $\nabla^{\sigma^N}$-parallel by the same lemma.

Applying Lemma \ref{gen} (1) to the $\SU(3)$-structure $(g^N,\omega^N,\psi^N)$ on $\T N$ and using that $\sigma^N=a\psi^N+b(*_N\psi^N)$, we immediately obtain
\begin{equation}\label{js}J^N_*\sigma^N=3(*_N\sigma^N),\qquad J^NX\lrcorner \sigma^N=-X\lrcorner(*_N\sigma^N),\qquad\forall X\in \T N\ .\end{equation}

We then compute for every $X\in \T N$:
\begin{equation}\label{nj}
    \n^{g^N}_XJ_N=-(\sigma^N_X)_*J^N=(J^N)_*(X\lrcorner\sigma^N)=(J^NX)\lrcorner\sigma^N+X\lrcorner((J^N)_*\sigma^N)=2X\lrcorner(*_N\sigma^N)\ .
\end{equation}

In particular we obtain that $(\n^{g^N}_XJ^N)X=0$ for every $X\in \T N$, so $(N,g^N,J^N,\omega^N)$ is nearly Kähler. 

If $\sigma=0$, the structure is Kähler, and $(J^N,\omega^N,\psi^N)$ is a $\nabla^{g^N}$-parallel $\SU(3)$-structure, so $(N,g^N,J^N)$ is Calabi-Yau (cf. Def \ref{defcy}). 
If $\sigma\ne 0$ then $\n^{g^N}_XJ^N\ne0$, so the nearly Kähler structure is strict.

We will now show that conversely, if $(N^6,g^N,J^N,\omega^N)$ is a 6-dimensional Calabi-Yau manifold or a strict nearly Kähler manifold, then $(M,g):=(N\times \R,g^N+dt^2)$ has a connection $\nt$ with parallel skew-symmetric torsion $\tau$ preserving a family of $\G_2$-structures on $M$. 

In the first case, the Calabi-Yau structure of $N$, i.e. the $\nabla^{g^N}$-parallel $\SU(3)$-structure $(J^N,\omega^N,\psi^N)$, defines for every $x,y\in\R$ with $x^2+y^2=1$ a $\nabla^g$-parallel $\G_2$-structure on $M$ 
\be\label{fcy}\f:=dt\wedge\pi^*\omega^N+x\pi^*\psi^N+y\pi^*(*_N\psi^N)\ .\ee

Assume now that $(N^6,g^N,J^N,\omega^N)$ is strict nearly Kähler and consider the $3$-form $\sigma^N:=-\frac16(*_N\d\omega^N)$. By Lemma \ref{nks}, the connection $\nabla^{\sigma^N}:=\nabla^{g^N}+\sigma^N$ has parallel skew-symmetric torsion and satisfies $\nabla^{\sigma^N}\omega^N=0$.
By the converse statement in Lemma \ref{par}, the connection $\nt$ determined on $M$ by 
\be\label{taun}\tau:=\pi^*\sigma^N\ ,\ee 
where $\pi:N\to N\times \R$ denotes the standard projection on the first factor, has parallel skew-symmetric torsion. For every $x,y\in\R$ we define 
\begin{equation}\label{phi}
\f:=dt\wedge \pi^*\omega^N+x\pi^*\sigma^N+y\pi^*(*_N\sigma^N)\ .
\end{equation} 
By Lemma \ref{g2su3}, the 3-form $\f$ defines a $\G_2$-structure compatible with $g$ if and only if $|\psi|^2_{\Lambda^3N}=4$, which by construction is equivalent to $(x^2+y^2)=\frac4{|\sigma^N|^2_{\Lambda^3\T N}}$.

The components $\omega$ and $\psi$ of the $\G_2$-form $\f$ are $\nt$-parallel by the converse statement in Lemma \ref{par}, and $\nt dt=0$ by direct computation, thus showing that $\nt \f=0$. One can easily check that $\delta^g\f=0$ if and only if $y=0$.

\subsubsection{The case $\alpha\ne 0$ and $\sigma= 0$}\label{412} We denote by $\Phi:=\frac1\alpha\gamma$. Since $\tau=\alpha\xi\wedge\Phi$, $\xi$, $\Phi$ are $\nt$-parallel, and $\Phi^2=-\Id_\D$, we get for every $X\in \T M$:
$$\n^g_X\xi=-\tau_X\xi=\tau_\xi X=\alpha\Phi(X)\ ,$$
and 
$$\n^g_X\Phi=-(\tau_X)\Phi=\Phi_*(\tau_X)=\alpha \Phi_*(g(X,\xi)\Phi-\xi\wedge\Phi(X))=-\alpha\xi\wedge\Phi^2(X)=\alpha \xi\wedge\Phi\ .$$

This shows that $(\xi,\Phi)$ defines an $\alpha$-Sasaki structure on $(M,g)$ (cf. Def. \ref{a-sas}).
Consider as before the local Riemannian submersion $\pi:(M,g)\to (N,g^N)$ determined by $\xi$, and the form $\omega^N\in\Omega^2(N)$ such that $\pi^*\omega^N=\omega$. In this case $\sigma^N=0$, so Lemma \ref{par} shows the well known fact that $\omega^N$ is $\nabla^{g^N}$-parallel, i.e. defines a Kähler structure on $(N,g^N)$. 

We claim that $g^N$ is actually Einstein. Indeed, since $\psi$ is $\nt$-parallel, the image of the curvature operator $R^\tau:\Lambda^2\T M\to\Lambda^2\T M$ acts trivially on $\psi$:
\be\label{rtpsi}(R^\tau(\tilde\beta))_*\psi=0,\qquad\forall \tilde\beta\in\Lambda^2\T M\ .
\ee
The curvature relation \eqref{rpb} reads in the present situation (with $\sigma^N=0$):
\be\label{rrr} R^{\tau}(\pi^*\beta)=\pi^*(R^{g^N}(\beta))+4\alpha^2 \la\beta,\omega^N\ra_{\Lambda^2\T N}\pi^*\omega^N\ ,\ee
for every $\beta\in\Omega^2(N)$. Since $R^{g^N}(\Lambda^2\T N)\subset\Lambda^{(1,1)}\T N$, this shows that $R^\tau(\Lambda^2\D)\subset\Lambda^{(1,1)}\D$. Then from \eqref{rtpsi} we obtain that in fact $R^\tau(\Lambda^2\D)\subset\Lambda^{(1,1)}_0\D$. Taking the scalar product with $\omega$ in \eqref{rrr} we thus obtain for every $\beta\in\Lambda^2\T N$
$$0=\la R^{g^N}(\beta),\omega^N\ra_{\Lambda^2\T N}+12\alpha^2 \la\beta,\omega^N\ra_{\Lambda^2\T N},$$
whence $R^{g^N}(\omega^N)=-12\alpha^2\omega^N$.

On the other hand, on every Kähler manifold  $(N,g^N,J^N,\omega^N)$, one has $R^{g^N}(\omega^N)=-\rho^N$, where $\rho^N=\Ric^N(J\cdot,\cdot)$ is the Ricci form. We have thus obtained that $\Ric^N=12\alpha^2g^N$ so $(N,g^N)$ has positive scalar curvature $\scal^N=72\alpha^2$.

Conversely, let $(N,g^N,\omega^N)$ be a 6-dimensional Kähler-Einstein manifold with positive scalar curvature $\scal^N$. We denote by $\a:=\sqrt{\frac{\scal^N}{72}}$ and let $\zeta$ be a $1$-form such that $d\zeta=2\alpha\omega^N$ on some open set $N_0$. Consider the Riemannian metric on $M:=\R\times N_0$ given by 
$$g=(dt+\pi^*\zeta)^2+\pi^*g^N\ ,$$
where $\pi$ is the projection of the second factor. We denote by $\xi$ the metric dual of $dt+\pi^*\zeta$ and by $\omega:=\pi^*\omega^N$. 

Then
$(M, g, \xi, \Phi)$ is a $7$-dimensional $\alpha$-Sasaki manifold. Indeed, $\xi$ is Killing and satisfies $d\xi=d\pi^*\zeta=2\alpha\pi^*\omega^N$, which is equivalent to the second equation in \eqref{a-sas} for $\Phi=\omega$. Equivalently, $\xi$ is parallel with respect to the connection $\nabla^{g}+\tau$, where 
\be\label{tau6}\tau:=\alpha\xi\wedge \omega\ .\ee 
We also have $(\tau_\xi)_*\omega=\alpha\omega_*\omega=0$ so by Lemma \ref{par}, $\omega$ is $\nabla^{\tau}$-parallel, which immediately gives the last equation in \eqref{a-sas}. 

The curvature operator $R^N$ maps $\Lambda^{(1,1)}_0\T N$ to itself since $(N,g^N,J^N)$ is Kähler-Einstein, vanishes on $\Lambda^{(2,0)+(0,2)}\T N$, and maps $\omega^N$ to $-\frac16\scal^N\omega^N=-12\alpha^2\omega^N$. By \eqref{rrr} we then obtain that $R^{\tau}$ takes values in $\Lambda^{(1,1)}_0\D$. The pair symmetry of $R^{\tau}$ then shows that the restriction of $\nabla^{\tau}$ to $\Lambda^{(3,0)+(0,3)}\D$ is flat, so one can find a (locally defined) $\nabla^{\tau}$-parallel section $\psi$ of $\Lambda^{(3,0)+(0,3)}\D$ of square norm $4$. By the converse statement in Lemma \ref{g2su3}, the 3-form 
\be\label{phi6}\f:=\xi\wedge\omega+\psi\ee 
defines a $\G_2$-structure on $M$ compatible with $g$, which is $\nt$-parallel since its defining components are all $\nt$-parallel.

\subsection{The case $\dim(\H)=4$} The standard decomposition of $\T M$ reads in this case $\T M=\H\oplus \V$ with $\dim(\V)=3$. By Remark \ref{par-vf}, $\xi$ is tangent to $\V$ so one can decompose $\V$ as $\V=\R\xi\oplus \V_0$. Since $\xi\lrcorner\omega=0$, and $\omega$ defines an orthogonal complex structure on $\D=\H\oplus\V_0$, one can write $\omega=\omega_0+e_1\wedge e_2$, where $\omega_0:=\omega|_\H$ determines an orthogonal complex structure on $\H$ and $\{e_1,e_2\}$ is any orthonormal basis of $\V_0$. We decompose 
$$\sigma=e_1\wedge e_2\wedge\zeta+e_1\wedge\sigma_1+
e_2\wedge\sigma_2+\sigma_0\ ,$$
with $\zeta\in \H$, $\sigma_1,\sigma_2\in\Lambda^2(\H)$ and $\sigma_0\in\Lambda^3(\H)$.

We denote as before by $\hol$ the Lie algebra of $\Hol(\nabla^{\tau})$. Since $\nabla^{\tau}$ preserves $\xi$, the $\SU(3)$-structure of $\D$, and the standard decomposition of $\T M$, we have that $\hol\subset\su(3)\cap(\so(\H)\oplus\so(\V))\simeq \su(2)\oplus \mathfrak{u}(1)$. Here $\su(2)=\Lambda^{1,1}_0\H=\Lambda^-\H$ is the set of anti-self-dual $2$-forms on $\H$ and $\u(1)$ is spanned by the 2-form $\omega_0-2e_1\wedge e_2$ on $\T N$. 

Let us denote by $\hol^0$ the projection of $\hol$ on $\su(2)$. Then $\hol\subset \u(1)\oplus \hol^0$, and since $\H$ is irreducible, $\hol$ is non-abelian. Consequently $\hol^0$ is not abelian, so $\hol^0=\su(2)$, i.e. $\Lambda^-\H\subset\hol$. 

This shows that $\hol$ acts irreducibly on $\H$, $\Lambda^-\H$ and $\Lambda^3\H$, whereas it acts trivially on $\Lambda^+\H$. We thus obtain that $\sigma_1,\sigma_2\in \Omega^+(\H)$ and $\zeta=0$, $\sigma_0=0$, i.e. 
$$\sigma=e_1\wedge\sigma_1+e_2\wedge\sigma_2,\qquad\mbox{ with }\sigma_1,\sigma_2\in\Omega^+(\H)\ .$$ 
Similar arguments show that 
$$\gamma=ae_1\wedge e_2+\gamma_0,\qquad\mbox{ with }a\in\R \mbox{ and }\gamma_0\in\Omega^+(\H)\ . $$

From \eqref{ts} we have $\gamma_*\sigma=0$, which now reads
$$0=(ae_1\wedge e_2+\gamma_0)_*(e_1\wedge\sigma_1+e_2\wedge\sigma_2)=a(e_2\wedge\sigma_1-e_1\wedge\sigma_2)+e_1\wedge(\gamma_0)_*\sigma_1+e_2\wedge(\gamma_0)_*\sigma_2\ ,$$
thus implying that 
\begin{equation}\label{gs}
    (\gamma_0)_*\sigma_1=a\sigma_2,\qquad(\gamma_0)_*\sigma_2=-a\sigma_1 \ .
\end{equation} 

Recall that $\Lambda^-\H\subset\hol\subset\Lambda^-\H\oplus \u(1)$ where $\u(1)$ is spanned by $\eta:=\omega_0-2e_1\wedge e_2\in\Omega^2(\T N)$. For dimensional reasons we thus either have $\hol=\Lambda^-\H$ or $\hol=\Lambda^-\H\oplus \u(1)$.

However, the case $\hol=\Lambda^-\H$ is impossible, since $\V$ would then be a trivial $\hol$-represen\-tation, which contradicts the assumption $\dim(\Par (\nabla^\tau))=1$ valid throughout this section.

Consequently $\hol=\Lambda^-\H\oplus \u(1)$ so in particular it contains the element $\eta:=\omega^0-2e_1\wedge e_2$, whence 
\begin{equation}\label{ge}
 \eta_*\gamma=0,\qquad   \eta_*\sigma=0\ .
\end{equation}
The first equation is equivalent to $[\omega_0,\gamma_0]=0$, i.e. $\gamma_0$ commutes (as endomorphism) with $\omega_0$. Since they both belong to $\Omega^+(\H)$, and are $\nabla^{\tau}$-parallel, they must be proportional, so there exists $b\in\R$ such that 
\be\label{gab}\gamma=ae_1\wedge e_2+b\omega_0\ .\ee

On the other hand, if we write $\omega=\eta+3e_1\wedge e_2$, and use the fact that $\eta_*$ commutes with $(e_1\wedge e_2)_*$, the second equation in \eqref{ge} shows that $\omega_*\omega_*\sigma=9(e_1\wedge e_2)_*(e_1\wedge e_2)_*\sigma=-9\sigma$. Consequently, $\sigma$ is of type $(3,0)+(0,3)$ with respect to the complex structure defined by $\omega$ on $\D$. 

Writing now $\gamma=\frac{a+2b}3\omega+\frac{b-a}3\eta$ and using the second equation in \eqref{ge} together with \eqref{ts} and Lemma \ref{gen} (1), we obtain
\be\label{a+2b}0=\gamma_*\sigma=\frac{a+2b}3\omega_*\sigma=(a+2b)*\sigma\ .\ee
Therefore we either have $a+2b=0$, or $\sigma=0$. These cases will be treated separately in the next two subsections.

\subsubsection{The case $a+2b=0$} In this case we have $\gamma_*\psi=0$, so $\psi$ is projectable onto a $\nabla^{\sigma^N}$-parallel $(3,0)+(0,3)$-form $\psi^N$ on $N$.
We can then apply the formulas \eqref{js} and \eqref{nj} to deduce as before that $(N,g^N,J^N)$ is strict nearly Kähler. Moreover, its canonical nearly Kähler connection (which is $\nabla^{\sigma^N}$) has holonomy contained in $\mathfrak{s}(\u(1)\oplus\u(2))$ so by \cite[Thm. 5.1]{MS24}, $(N,g^N,J^N)$ is the twistor space of an anti-self-dual 4-dimensional Einstein manifold with positive scalar curvature.

Conversely, assume that $(N,g^N,J^N)$ is the nearly Kähler twistor space of an anti-self-dual 4-dimensional manifold with positive scalar curvature, and let $\sigma^N$ be the $3$-form defined by $\sigma^N:=-\frac16 *_N d\omega^N$. By Lemma \ref{nks}, $\nabla^{\sigma^N}$ is the canonical connection of the nearly Kähler structure, and it is well known (see for instance \cite{R93}) that in this case it preserves both the vertical space $\V_0$ and the horizontal space $\H$ of the twistor fibration. We denote by $\omega_\H$ and $\omega_\V$ the restrictions of $\omega^N$ to $\H$ and $\V_0$. 

\begin{elem}
  The $2$-form $\eta:=\omega_\H-2\omega_\V\in\Omega^2(N)$ is closed.
\end{elem}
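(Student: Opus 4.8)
The plan is to compute $d\eta$ directly using the structure equations of the nearly K\"ahler twistor fibration. Recall that on a strict nearly K\"ahler $6$-manifold $(N,g^N,J^N,\omega^N)$ one has $d\omega^N=3\rho$ for some $3$-form $\rho$ proportional to $\sigma^N$ (indeed $\sigma^N=-\frac16*_Nd\omega^N$ is of type $(3,0)+(0,3)$, and the standard nearly K\"ahler identity gives $d\omega^N=-3*_N\sigma^N$ up to a universal constant). Since $\nabla^{\sigma^N}$ preserves the splitting $\T N=\H\oplus\V_0$ into the horizontal and vertical distributions of the twistor fibration, the forms $\omega_\H$ and $\omega_\V$ are each $\nabla^{\sigma^N}$-parallel, and Lemma \ref{dbeta} applies to each of them: $d\omega_\H=2\sum_i(e_i\lrcorner\sigma^N)\wedge(e_i\lrcorner\omega_\H)$ and similarly for $\omega_\V$.

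First I would exploit the fact that $\V_0$ is $2$-dimensional, so $\omega_\V$ is (up to sign) the volume form of $\V_0$ and $\omega_\H=\omega^N-\omega_\V$. Thus $d\eta=d\omega_\H-2d\omega_\V=d\omega^N-3d\omega_\V$. It therefore suffices to show $d\omega_\V=\frac13 d\omega^N$, equivalently $d\omega_\V=\rho=-*_N\sigma^N$ (with the appropriate normalization). To evaluate $d\omega_\V$ I would use Lemma \ref{dbeta}: picking an adapted orthonormal frame $e_1,e_2$ spanning $\V_0$ and $e_3,\dots,e_6$ spanning $\H$, only the vertical frame vectors contribute to $e_i\lrcorner\omega_\V$, so $d\omega_\V=2\big((e_1\lrcorner\sigma^N)\wedge(e_1\lrcorner\omega_\V)+(e_2\lrcorner\sigma^N)\wedge(e_2\lrcorner\omega_\V)\big)$. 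Writing $e_1\lrcorner\omega_\V=-e_2$, $e_2\lrcorner\omega_\V=e_1$ and recalling from Lemma \ref{g2su3}/Lemma \ref{op} the explicit shape of a $(3,0)+(0,3)$-form adapted to $\omega^N$ — in the frame where $\omega^N=e^{12}+e^{34}+e^{56}$ one has $\sigma^N$ proportional to $e^{135}-e^{146}-e^{236}-e^{245}$ — a direct expansion gives $d\omega_\V$ as the sum of the terms of $\sigma^N$ not involving $e^{12}$, which is exactly $\rho$ restricted appropriately; comparing with $d\omega^N=3\rho$ yields $d\omega_\V=\frac13 d\omega^N$, hence $d\eta=0$.

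An alternative, cleaner route avoids frame computations entirely: since $\eta\in\Omega^2(N)$ is $\nabla^{\sigma^N}$-parallel (being a combination of the parallel forms $\omega_\H,\omega_\V$), Lemma \ref{dbeta} gives $d\eta=2\sum_i(e_i\lrcorner\sigma^N)\wedge(e_i\lrcorner\eta)$, and I would argue that $\sum_i(e_i\lrcorner\sigma^N)\wedge(e_i\lrcorner\eta)=(\tfrac{a+2b}{3}\text{-type cancellation})$ vanishes because $\eta$ and $\sigma^N$ interact through the $\SU(3)$-decomposition much as in \eqref{a+2b}: writing $\eta=\omega_\H-2\omega_\V$ and using that $\sigma^N$ is of type $(3,0)+(0,3)$, the contraction picks out the trace-like component of $\eta$ against $\omega^N$, and since $\langle\eta,\omega^N\rangle=\langle\omega_\H-2\omega_\V,\omega_\H+\omega_\V\rangle=|\omega_\H|^2-2|\omega_\V|^2=4-2=2\neq 0$ one must instead use that the relevant operator is $\eta_*$ acting on $\sigma^N$, and Lemma \ref{gen}(4) together with the fact that the trace-free part of $\eta$ kills $\sigma^N$ while the $\omega^N$-component acts by $J^N_*$ — so the computation reduces to checking a single scalar identity coming from the normalization of the twistor fibration.

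The main obstacle I expect is pinning down the precise numerical normalizations: the constant relating $\sigma^N$ to $d\omega^N$, the ratio $3$ between horizontal and vertical scaling in $\eta=\omega_\H-2\omega_\V$ (which is dictated by the twistor geometry and the requirement, visible in \eqref{gab} and the surrounding discussion, that $\eta$ spans the $\u(1)$-summand of $\hol$), and the sign conventions in Lemma \ref{dbeta}. Once these are fixed consistently — most safely by working in the explicit adapted frame of Lemma \ref{op} shifted to $\V_0\oplus\H$ — the vanishing $d\eta=0$ falls out of a short bilinear computation. I would therefore present the frame-based argument as the primary proof, since it is self-contained and leaves no normalization ambiguity, and merely remark on the structural interpretation via the $\SU(3)$-decomposition.
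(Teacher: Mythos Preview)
Your ``alternative, cleaner route'' is precisely the paper's argument, and you were one arithmetic slip away from completing it. The error is in the line
\[
\langle\eta,\omega^N\rangle=|\omega_\H|^2-2|\omega_\V|^2=4-2=2.
\]
With the $\Lambda^2$-norm used throughout (so that $|e^{ij}|^2=1$), one has $|\omega_\H|^2=|e^{34}+e^{56}|^2=2$ and $|\omega_\V|^2=|e^{12}|^2=1$, hence $\langle\eta,\omega^N\rangle=2-2=0$. (If you had instead used the endomorphism norm consistently, you would get $4-2\cdot 2=0$; the nonzero answer comes from mixing the two conventions.) Thus $\eta$ \emph{is} a primitive $(1,1)$-form, i.e.\ $\eta\in\Omega^{(1,1)}_0(N)$. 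Now Lemma~\ref{dbeta} gives $d\eta=2\sum_i(e_i\lrcorner\sigma^N)\wedge(e_i\lrcorner\eta)=2\,\eta_*\sigma^N$, and Lemma~\ref{gen}(4) says that primitive $(1,1)$-forms act trivially on $\Lambda^{(3,0)+(0,3)}$, so $d\eta=0$. That is the entire proof.

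Your mistaken value $\langle\eta,\omega^N\rangle\neq 0$ is not a harmless normalization issue: if $\eta$ really had an $\omega^N$-component, then $\eta_*\sigma^N$ would contain a nonzero multiple of $J^N_*\sigma^N=3(*_N\sigma^N)$ and $d\eta$ would \emph{not} vanish, so the ``single scalar identity'' you hoped for cannot save the argument. The frame computation in your first approach can of course be pushed through, but it is unnecessary once you see that $\eta$ is primitive; the coefficients $1$ and $-2$ in $\eta=\omega_\H-2\omega_\V$ are exactly what make this happen (this is the same $a+2b=0$ phenomenon you cite from \eqref{a+2b}, specialized to $a=1$, $b=-2$ for the twistor case).
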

\begin{proof}
    Since $\eta$ is $\nabla^{\sigma^N}$-parallel, Lemma \ref{dbeta} gives:
\bea
d\eta&=&2\sum_{i=1}^6(e_i\lrcorner\sigma^N)\wedge(e_i\lrcorner\eta)
\, = \, 2\eta_*\sigma^N\ .
\eea
On the other hand, $\eta\in \Omega^{1,1}_0(N)$ is a primitive $(1,1)$ form, whereas $\sigma^N \in \Omega^{(3,0)+(0,3)}(N)$, and Lemma \ref{gen} (4) shows that the former space acts trivially as derivation on the latter. 
\end{proof}

Consequently, on any contractible open subset $N_0$ of $N$, there exists a 1-form $\zeta\in \Omega^1(N_0)$ such that $d\zeta=\eta$. We define the Riemannian metric 
\begin{equation}\label{g}
    g:=(dt+\pi^*\zeta)^2+\pi^*g^N
\end{equation} 
on the manifold $M:=\R\times N_0$, where $\pi$ denotes the projection on the second factor. Then $\pi$ is a Riemannian submersion with totally geodesic fibers tangent to the unit vector field $\xi:=\partial/\partial t$ (which is the metric dual of $dt+\pi^*\zeta$). 

For all real numbers $x,y$ we define the exterior forms on $M$:
\begin{equation}\label{tf}
  \tau:=\frac12(dt+\pi^*\zeta)\wedge\pi^*\eta+\pi^*\sigma^N,\qquad \f:= (dt+\pi^*\zeta)\wedge\pi^*\omega^N+x\pi^*\sigma^N+y\pi^*(*_N\sigma^N)\ .
\end{equation} 
Like before, Lemma \ref{g2su3} shows that $\f$ defines a $\G_2$-form on $M$ which is compatible with $g$ if and only if $(x^2+y^2)=\frac4{|\sigma^N|^2_{\Lambda^3\T N}}$. 

We claim that $\tau$ and $\f$ are parallel with respect to the metric connection $\nt:=\n^g+\tau$. Using the converse statement in Lemma \ref{par}, we need to show that 
\begin{enumerate}
    \item $\eta,\ \omega^N$, and $\sigma^N$ are $\nabla^{\sigma^N}$-parallel;
    \item $\eta_*\omega^N=0$, and $\eta_*\sigma^N=\eta_*(*_N\sigma^N)=0$;
   \item $\nt(dt+\pi^*\zeta)=0.$
\end{enumerate}
The first item is clear by construction, and the second follows from the fact that $\eta\in\Omega^{(1,1)}(N)$ so its action $\eta_*$ on $\omega^N$ and $\Lambda^{(3,0)+(0,3)}\T N$ is zero by Lemma \ref{gen} (4).
Finally, since the dual vector field $\xi$ of $dt+\pi^*\zeta$ is Killing with respect to $g$, we compute for every $X\in \T M$:
$$\nabla^\tau_X\xi=\nabla^g_X\xi+\tau_X\xi=\nabla^g_X\xi-\tau_\xi X=\frac12 d(dt+\pi^*\zeta)(X)-\frac12\pi^*\eta(X)=0\ .$$

It is easy to check that the $\G_2$-structure defined in \eqref{tf} is cocalibrated if and only if $y=0$. 

 \begin{ere} Note that this solution (corresponding to the case (3)(d) in Theorem \ref{main}) is somewhat similar to the one in \S4.1.2, corresponding to the case (3)(d) in Theorem \ref{main}. Both are obtained as Riemannian submersions with totally geodesic 1-dimensional fibers over special 6-manifolds, except that in the present situation the metric is the nearly Kähler metric of a twistor space over anti-self-dual positive Einstein 4-manifold, whereas in \S4.1.2 the metric is Kähler-Einstein. It is well-known that every such twistor space also admits a Kähler-Einstein metric, which means that every solution in the case (3)(d) gives rise to a solution in the case (3)(c) of Theorem \ref{main} (but not conversely). Moreover, the torsion forms of these solutions are of different algebraic nature.\end{ere}

We now go back to \eqref{a+2b} and consider the other possibility.

\subsubsection{The case $\sigma=0$} \label{422}
This case is somewhat similar to \S\ref{412} but more involved.
Consider as before the local Riemannian submersion $\pi:(M,g)\to (N,g^N)$ determined by $\xi$, and the forms $\omega^N,\ \gamma^N,\ \eta^N\in\Omega^2(N)$ and such that $\pi^*\omega^N=\omega$, $\pi^*\eta^N=\eta$, and $\pi^*\gamma^N=\gamma$. In this case $\sigma^N=0$, so Lemma \ref{par} shows that $\omega^N,\ \gamma^N$, and $\eta^N$ are $\nabla^{g^N}$-parallel. Thus $\omega^N$ defines a Kähler structure on $(N,g^N)$, and $\eta^N$ defines a $\nabla^{g^N}$-parallel splitting $\T N=\H^N\oplus\V_0^N$ whose horizontal lift is exactly the decomposition $\D=\H\oplus\V_0$. 

By the local de Rham theorem, $(N,g^N)$ is locally the product of two Kähler manifolds: $(K,g^K,\omega^K)$ of real dimension $4$, and $(\Sigma,g^\Sigma,\omega^\Sigma)$ of real dimension 2. By \eqref{gab}
we have 
\be\label{gn}\gamma^N=a\omega^K+b\omega^\Sigma\ .\ee
We will now show that, similar to \S\ref{412}, $g^K$ and $g^\Sigma$ are Einstein metrics. 

The curvature relation \eqref{rpb} reads in the present situation (with $\sigma^N=0$):
\be\label{rrr2} R^{\tau}(\pi^*\beta)=\pi^*(R^{g^N}(\beta))+4 \la\beta,a\omega^K+b\omega^\Sigma\ra_{\Lambda^2\T N}\pi^*(a\omega^K+b\omega^\Sigma)\ ,\ee
for every $\beta\in\Omega^2(N)$.

Like before, since $\psi$ is $\nt$-parallel, the image of the curvature operator $R^\tau:\Lambda^2\T M\to\Lambda^2\T M$ acts trivially on $\psi$:
\be\label{rtpsi1}(R^\tau(\tilde\beta))_*\psi=0,\qquad\forall \tilde\beta\in\Lambda^2\T M\ .
\ee
 Since $R^{g^N}(\Lambda^2\T N)\subset\Lambda^{(1,1)}\T N$, \eqref{rrr2} shows that $R^\tau(\Lambda^2\D)\subset\Lambda^{(1,1)}\D$. Then from \eqref{rtpsi1} we obtain that in fact $R^\tau(\Lambda^2\D)\subset\Lambda^{(1,1)}_0\D$. Taking the scalar product with $\omega$ in \eqref{rrr2} we thus obtain for every $\beta\in\Lambda^2\T N$
$$0=\la R^{g^N}(\beta),\omega^N\ra_{\Lambda^2\T N}+4(2a+b) \la\beta,a\omega^K+b\omega^\Sigma\ra_{\Lambda^2\T N},$$
whence $R^{g^N}(\omega^N)=-4(2a+b)(a\omega^K+b\omega^\Sigma)$.

On the other hand, $R^{g^N}(\omega^N)=-\rho^K-\rho^\Sigma$, whence $\Ric^K=4a(2a+b)g^K$ and $\Ric^\Sigma=4b(2a+b)g^\Sigma$. In particular $\Sigma$ has constant Gaussian curvature. The scalar curvatures are then given by 
\be\label{scal}\scal^K=16a(2a+b),\qquad\scal^\Sigma=8b(2a+b)\ .\ee 
If $b=0$ we get $\scal^\Sigma=0$, so $\Sigma$ is flat, and actually a direct factor in $M$ (as $\tau=a\pi^*\omega^K$ by \eqref{gn} in this case). This contradicts the fact that $\Par(\nabla^\tau)=\R\xi$. Consequently $b\ne0$.

We thus either have $a+2b\ne 0$ and $b\ne 0$, in which case $\scal^K+\scal^\Sigma=8(2a+b)^2>0$ and $\scal^\Sigma\ne 0$, or $a+2b=0$ and $b\ne 0$, in which case $\scal^K=\scal^\Sigma=0$.

Conversely, consider a Kähler-Einstein manifold $(K,g^K,\omega^K)$ of real dimension $4$ and a constant curvature surface $(\Sigma,g^\Sigma,\omega^\Sigma)$, with scalar curvatures satisfying either $\scal^K+\scal^\Sigma>0$ and $\scal^\Sigma\ne 0$, or $\scal^K=\scal^\Sigma=0$. Then there exist solutions $a,b$ of the system \eqref{scal} with $b\ne 0$: in the first case the solution is up to sign uniquely determined by
$$a=\frac{\scal^K}{4\sqrt{2(\scal^K+\scal^\Sigma)}},\qquad b=\frac{\scal^\Sigma}{2\sqrt{2(\scal^K+\scal^\Sigma)}}\ ,$$ whereas in the second case there is a 1-parameter family of  solutions of the form $(a,b)=(t,-2t)$, with $t\ne 0$. 

For any such solution $(a,b)$, consider the Riemannian product $$(N,g^N,\omega^N)=(K,g^K,\omega^K)\times (\Sigma,g^\Sigma,\omega^\Sigma)$$ 
and let $\zeta$ be a primitive of $\gamma^N:=a\omega^K+b\omega^\Sigma$ on some open set $N_0$. Consider the Riemannian metric $g:=(dt+\pi^*\zeta)^2+\pi^*g^N\ ,$ on $M:=\R\times N_0$, where $\pi$ is the projection of the second factor. We denote by $\xi$ the metric dual of $dt+\pi^*\zeta$, by $\omega:=\pi^*(\omega^K+\omega^\Sigma)$, by $\gamma:=\pi^*\gamma^N$, and 
\be\label{tau7}\tau:=\xi\wedge \gamma\ .
\ee

Then $\xi$ is a unit Killing vector field on $(M,g)$ and satisfies $d\xi=d\pi^*\zeta=2\pi^*\gamma^N=2\gamma$, whence $\nabla^g_X\xi=\gamma(X)$ for every $X\in\T M$. Equivalently, $\xi$ is parallel with respect to the connection $\nt:=\nabla^{g}+\tau$. We also have $\gamma_*\omega=\pi^*([\gamma^N,\omega^N])=\pi^*([a\omega^K+b\omega^\Sigma,\omega^K+\omega^\Sigma])=0$, so by Lemma \ref{par}, $\omega$ is $\nabla^{\tau}$-parallel. Thus $\nt$ is a metric connection with parallel skew-symmetric torsion.

By doing the above calculations in reverse order, we obtain that $R^{\tau}$ takes values in $\Lambda^{(1,1)}_0\D$, and as before  one can find a (locally defined) $\nabla^{\tau}$-parallel section $\psi$ of $\Lambda^{(3,0)+(0,3)}\D$ of square norm $4$. By the converse statement in Lemma \ref{g2su3}, the 3-form 
\be\label{phi7}\f:=\xi\wedge\omega+\psi\ee 
defines a $\nt$-parallel $\G_2$-structure on $M$ compatible with $g$.

\section{The case $\dim(\Par (\nabla^\tau))\ge 2$}\label{s5}

In this section we assume that there exist (at least) two orthogonal $\nabla^\tau$-parallel unit vector fields $\xi_1$ and $\xi_2$. 
Then $\xi_3 := \varphi(\xi_1, \xi_2)$ is a $\nabla^\tau$-parallel unit vector field, orthogonal to $\xi_1$ and $\xi_2$. Indeed, since $\G_2$ acts transitively on orthonormal pairs of vectors, one can find for every $x\in M$ an adapted orthonormal basis $\{e_1,\ldots,e_7\}$ of $\T_x M$ such that $\xi_1=e_1$ and $\xi_2=e_2$, so by \eqref{stf} one gets that $\xi_3=e_3$ is also a unit vector. Let $\V := \R\xi_1\oplus \R\xi_2\oplus \R\xi_3$
and $\H: = \V^\perp$ and $\T M = \V \oplus \H$ be the corresponding $\nt$-parallel orthogonal splitting of $\T M$. We assume that $(M,g,\tau)$ is not Ambrose-Singer, so this is exactly the standard decomposition of $\T M$ by Lemma  \ref{hrep}. Using the expression \eqref{fb} of the $\G_2$-structure $\varphi$, one can write
\begin{equation}\label{phi1}
\varphi = \xi_1 \wedge \xi_2 \wedge \xi_3 + \sum^3_{i=1} \xi_i \wedge \beta_i \ .
\end{equation}
for some self-dual $2$-forms $\beta_i\in\Omega^+(\H)$ which satisfy $[\beta_i, \beta_j] = - 2 \beta_k$ for
all even permutations $(i,j,k)$ of $\{1,2,3\}$, are pairwise orthogonal, and with $|\beta_i|^2_{\Lambda^2\H}=2$.
Moreover, the $2$-forms $\beta_i$ are $\nabla^\tau$-parallel, thus the holonomy algebra $\hol$ acts
trivially on $\beta_i$, whence $\hol \subset \Lambda^-\H$. The inclusion cannot be strict since then the holonomy algebra would be at most 1-dimensional, contradicting the fact that $\H$ is $\hol$-irreducible by Lemma  \ref{hrep}.

We thus have $\hol = \Lambda^- \H\simeq\su(2)$. According to the splitting $\T M=\R\xi_1\oplus \R\xi_2\oplus \R\xi_3\oplus\H$, we write the torsion form $\tau$ as
$$
\tau = a \xi_1 \wedge \xi_2 \wedge \xi_3 
+ \sum_{i,j} \xi_j \wedge \xi_k \wedge \eta_i 
+ \sum^3_{i=1} \xi_i \wedge \gamma_i 
+ \tau_\H   \ ,
$$
for some constant $a\in\R$, and horizontal forms $\eta_i \in \Omega^1(\H), \gamma_i \in \Omega^2(\H)$
and $\tau_\H \in \Omega^3(\H)$. From the $\hol$-invariance of these forms we
immediately have $\eta_i = 0$, $\tau_\H =0$ and $\gamma_i \in \Omega^+(\H)$.
Hence the torsion form can be written as
\begin{equation}\label{tau1}
\tau = a \xi_1 \wedge \xi_2 \wedge \xi_3  
+ \sum^3_{i=1} \xi_i \wedge \gamma_i  \ .
\end{equation}

\begin{elem}\label{com}
For every even permutations $(i,j,k)$ of $\{1,2,3\}$ it holds that
    $[\gamma_i, \gamma_j] = a \gamma_k$.
    
\end{elem}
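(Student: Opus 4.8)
The statement $[\gamma_i,\gamma_j]=a\gamma_k$ should follow from the condition $\nabla^\tau\tau=0$, or more precisely from the vanishing of the exterior derivative $d(d\xi_i)=0$ together with Lemma \ref{dbeta}. The plan is to exploit that each $\xi_i$ is a unit $\nabla^\tau$-parallel vector field, so by Lemma \ref{xik} we have $d\xi_i=2\tau_{\xi_i}$, and by \eqref{tau1} the endomorphism/$2$-form $\tau_{\xi_i}$ equals $\xi_j\wedge\xi_k\,\cdot a$ (suitably signed for the even permutation) plus $\gamma_i$; more carefully, $\xi_i\lrcorner\tau = a\,\xi_j\wedge\xi_k+\gamma_i$ for the even permutation $(i,j,k)$. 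Then I would compute $0=d(d\xi_i)=2\,d(\xi_i\lrcorner\tau)$ using Lemma \ref{dbeta} applied to the $\nabla^\tau$-parallel $2$-form $\beta:=\xi_i\lrcorner\tau$, giving $d\beta=2\sum_\ell (e_\ell\lrcorner\tau)\wedge(e_\ell\lrcorner\beta)=2\beta_*\tau$ in the notation of the earlier lemmas (where $\beta_*$ is the derivation extension of $\beta$ seen as a skew endomorphism). So the identity to unwind is $\big(\xi_i\lrcorner\tau\big)_*\tau=0$, exactly as in Lemma \ref{txit}.

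The key computational step is to expand $\big(a\,\xi_j\wedge\xi_k+\gamma_i\big)_*\tau=0$ using \eqref{tau1} and the fact that $\gamma_i\in\Omega^+(\H)$ acts trivially on the $\xi_\ell$ and preserves $\H$, while $(\xi_j\wedge\xi_k)_*$ acts as the obvious rotation in the $\xi_j,\xi_k$-plane and trivially on $\H$ and on $\xi_i$. Carrying out the expansion: $(\xi_j\wedge\xi_k)_*\big(a\xi_1\wedge\xi_2\wedge\xi_3\big)=0$; $(\xi_j\wedge\xi_k)_*\big(\sum_\ell\xi_\ell\wedge\gamma_\ell\big)=\xi_k\wedge\gamma_j-\xi_j\wedge\gamma_k$ (up to sign conventions); $(\gamma_i)_*\big(a\xi_1\wedge\xi_2\wedge\xi_3\big)=0$ since $\gamma_i$ kills each $\xi_\ell$; and $(\gamma_i)_*\big(\sum_\ell\xi_\ell\wedge\gamma_\ell\big)=\sum_\ell\xi_\ell\wedge[\gamma_i,\gamma_\ell]$ since $(\gamma_i)_*\gamma_\ell=[\gamma_i,\gamma_\ell]$ as skew endomorphisms of $\H$ (and $[\gamma_i,\gamma_i]=0$). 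Collecting the coefficient of each $\xi_\ell$ in the resulting horizontal-valued expression and using that the $\xi_\ell\wedge(\cdot)$ are linearly independent, the $\xi_i$-component gives nothing new, while the $\xi_j$- and $\xi_k$-components yield $[\gamma_i,\gamma_k]=a\gamma_j$ and $[\gamma_i,\gamma_j]=-a\gamma_k$, equivalently $[\gamma_i,\gamma_j]=a\gamma_k$ after reindexing with the even permutation, which is the claim.

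The main obstacle I expect is bookkeeping of signs: the derivation $(\xi_j\wedge\xi_k)_*$ acting on $\xi_\ell\wedge\gamma_\ell$ involves the cyclic structure, and one must be careful that the commutator $[\gamma_i,\gamma_j]$ appearing in $(\gamma_i)_*(\xi_j\wedge\gamma_j)=\xi_j\wedge[\gamma_i,\gamma_j]$ is the commutator of endomorphisms of $\H$ (matching the normalization in Lemma \ref{phicom}, where $[\beta_i,\beta_j]=-2\beta_k$), not of the ambient space. A clean way to avoid errors is to contract the identity $\big(\xi_i\lrcorner\tau\big)_*\tau=0$ with $\xi_j$ (i.e.\ apply $\xi_j\lrcorner$) to isolate the purely horizontal term $[\gamma_i,\gamma_j]\mp a\gamma_k$, rather than reading off components by inspection. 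An alternative, perhaps even cleaner, route is to use \eqref{ts}-type relations: since $\gamma_i=\xi_i\lrcorner\tau$ and all the $\xi_\ell$ are $\nabla^\tau$-parallel, one can directly invoke the computation in the proof of Lemma \ref{txit} with $\xi$ replaced by $\xi_i$, obtaining $(\xi_i\lrcorner\tau)_*\tau=0$ immediately, and then just expand. Either way the proof is short once the $\nabla^\tau$-parallelism of $\xi_1,\xi_2,\xi_3$ and of the $\gamma_i$ is in hand.
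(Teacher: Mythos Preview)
Your approach is essentially identical to the paper's: both use the identity $(\tau_{\xi_i})_*\tau=0$ (Lemma \ref{txit}, equivalently $d(d\xi_i)=0$ via Lemma \ref{dbeta}) and expand $(a\,\xi_j\wedge\xi_k+\gamma_i)_*\tau$ using \eqref{tau1}, then read off the coefficients of $\xi_j$ and $\xi_k$. The only wrinkle is that your displayed intermediate relations have swapped signs (from the expansion one actually gets $[\gamma_i,\gamma_j]=a\gamma_k$ and $[\gamma_i,\gamma_k]=-a\gamma_j$ directly, no reindexing needed); since you already flag sign bookkeeping as the main hazard and propose contracting with $\xi_j$ to pin it down, this is a cosmetic slip, not a gap.
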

\proof
From Lemma \ref{txit} we know that the action of $\tau_{\xi_i}$ on $\tau$
vanishes. Hence:
$$
0 = (\tau_{\xi_i})_*\tau = (a \xi_j \wedge \xi_k + \gamma_i)_*\tau
= a \xi_k \wedge \gamma_j - a \xi_j \wedge \gamma_k + \xi_j \wedge [\gamma_i, \gamma_j]
+ \xi_k \wedge [\gamma_i, \gamma_k]
$$
for all even permutations $(i,j,k)$  of $\{1,2,3\}$ and  the claimed commutator relation follows.
\qed

Consider the $3\times 3$ matrix $A=(a_{ij})$ defined by $\gamma_i = \sum^3_{j=1} a_{ij} \beta_j$.
\begin{elem}\label{det}
For every $i\in\{1,2,3\}$ one has:
   \be\label{aa}a |\gamma_i|^2_{\Lambda^2\H} = -4 \det A\ .\ee 
\end{elem}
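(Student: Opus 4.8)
The key observation is that the three $2$-forms $\beta_i$ satisfy the quaternionic relations $[\beta_i,\beta_j]=-2\beta_k$, so they span a copy of $\su(2)$ inside $\Lambda^+\H$; together with $|\beta_i|^2_{\Lambda^2\H}=2$ and the fact that every two orthogonal elements of $\Lambda^+\H$ anticommute as endomorphisms of $\H$ (used already in the proof of Lemma \ref{phicom}), they behave exactly like a multiple of the standard imaginary quaternions. The plan is to translate the commutator relation $[\gamma_i,\gamma_j]=a\gamma_k$ of Lemma \ref{com} into a polynomial identity on the matrix $A=(a_{ij})$ and then extract \eqref{aa} from it.

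First I would compute $[\gamma_i,\gamma_j]$ directly from $\gamma_i=\sum_l a_{il}\beta_l$: using bilinearity of the bracket and $[\beta_l,\beta_m]=-2\epsilon_{lmn}\beta_n$ (with $\epsilon$ the sign of the permutation), one gets
$$[\gamma_i,\gamma_j]=\sum_{l,m}a_{il}a_{jm}[\beta_l,\beta_m]=-2\sum_n\Big(\sum_{l,m}\epsilon_{lmn}a_{il}a_{jm}\Big)\beta_n\ .$$
The inner sum $\sum_{l,m}\epsilon_{lmn}a_{il}a_{jm}$ is precisely the $n$-th component of the cross product of the $i$-th and $j$-th rows of $A$, i.e. the $(k,n)$-entry of the cofactor (adjugate) matrix, $(\mathrm{cof}\,A)_{kn}=\det A\,(A^{-1})_{nk}$ when $A$ is invertible. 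On the other hand $a\gamma_k=\sum_n a\,a_{kn}\beta_n$, and since $\{\beta_n\}$ is a basis of $\Lambda^+\H$, Lemma \ref{com} forces the coefficientwise identity
$$-2\,(\mathrm{cof}\,A)_{kn}=a\,a_{kn}\qquad\text{for all }k,n\ ,$$
that is, $-2\,\mathrm{cof}\,A=aA$, or equivalently $A^t A=-\tfrac{a}{2}(\det A)\,I_3$ after multiplying by $A^t$ on the right (using $A\cdot(\mathrm{cof}\,A)^t=(\det A)I$, appropriately transposed). Reading off the diagonal entries of $A^tA$, whose $i$-th entry is $\sum_j a_{ij}^2=|\gamma_i|^2_{\Lambda^2\H}$ because the $\beta_j$ are orthogonal of square norm $2$...

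Here I need to be slightly careful with the normalization: $|\gamma_i|^2_{\Lambda^2\H}=\langle\sum_j a_{ij}\beta_j,\sum_l a_{il}\beta_l\rangle=\sum_{j,l}a_{ij}a_{il}\langle\beta_j,\beta_l\rangle=2\sum_j a_{ij}^2$, since the $\beta_j$ are pairwise orthogonal with $|\beta_j|^2=2$. So the $i$-th diagonal entry of $A^tA$ is $\tfrac12|\gamma_i|^2_{\Lambda^2\H}$, and the relation $A^tA=-\tfrac a2(\det A)I_3$ yields $\tfrac12|\gamma_i|^2_{\Lambda^2\H}=-\tfrac a2\det A$, i.e. $a|\gamma_i|^2_{\Lambda^2\H}=-4\det A$ — wait, that gives $|\gamma_i|^2=-a\det A$, off by a factor; the constant $4$ suggests the correct identity is $-2\,\mathrm{cof}\,A = aA$ combined with $A\,(\mathrm{cof}\,A)^t=(\det A)I$ giving $A A^t=-\tfrac a2(\det A)I$, and then $(AA^t)_{ii}=\sum_j a_{ij}^2=\tfrac12|\gamma_i|^2$, so $\tfrac12|\gamma_i|^2=-\tfrac a2\det A$... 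I should recheck the cofactor sign conventions and the exact placement of transposes when I write this up, since the target constant $-4$ pins down which convention is intended; the safest route is to verify on the explicit basis $\gamma_i=\beta_i$ (so $A=I$) where Lemma \ref{com} gives $-2\beta_k = a\beta_k$, hence $a=-2$, and then \eqref{aa} reads $(-2)(2)=-4\cdot 1$, which checks out and fixes all signs.

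\textbf{Main obstacle.} The computation is essentially bookkeeping with the structure constants $\epsilon_{ijk}$ and the adjugate identity $A\cdot\mathrm{adj}(A)=(\det A)I$; the only genuine subtlety is keeping the $\Lambda^2$-versus-$\End^-$ normalization straight (the factor-of-two discrepancy flagged in \S2.1) and the cofactor sign/transpose conventions, which is why I would anchor the argument with the $A=I$ sanity check before presenting the general case. There is also a degenerate point to address: if $A$ is not invertible the adjugate argument still goes through as a polynomial identity (both $-2\,\mathrm{cof}\,A=aA$ and $A\cdot(\mathrm{cof}\,A)^t=(\det A)I$ hold for all matrices), so no invertibility hypothesis is needed, and \eqref{aa} holds identically in all cases.
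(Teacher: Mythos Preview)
Your approach is correct but takes a longer route than the paper's. You expand $[\gamma_i,\gamma_j]$ in the $\beta$-basis, identify the coefficients as cofactors of $A$, deduce the matrix identity $-2\,\mathrm{cof}\,A = aA$, and then contract against $A$ via the adjugate relation to obtain $a\,AA^t = -2(\det A)I_3$, whose diagonal entries give the statement. (Your intermediate line $A^tA=-\tfrac a2(\det A)I_3$ has the factor inverted --- it should read $a\,A^tA=-2(\det A)I_3$ --- but you flag this yourself, and your $A=I$ sanity check pins it down.)

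The paper bypasses the cofactor machinery entirely: it computes the single scalar $\langle[\gamma_1,\gamma_2],\gamma_3\rangle_{\Lambda^2\H}$ directly as $\sum_{r,s,t}a_{1r}a_{2s}a_{3t}\,\langle[\beta_r,\beta_s],\beta_t\rangle = -4\det A$, using $\langle[\beta_r,\beta_s],\beta_t\rangle=-4\,\epsilon(r,s,t)$, and then observes that the same scalar equals $a|\gamma_3|^2$ by Lemma~\ref{com}. That is a two-line argument, with the other indices following by cyclic symmetry. Your route proves strictly more --- the full matrix equation $-2\,\mathrm{cof}\,A=aA$, which in particular forces all $|\gamma_i|^2$ to coincide --- at the price of the adjugate bookkeeping; the paper's triple-product pairing extracts exactly the scalar it needs and nothing else.
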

\proof
The elements $\beta_i\in\Omega^+(\H)$ satisfy $|\beta_i|^2_{\Lambda^2\H}=2$ and $[\beta_i,\beta_j]=-2\beta_k$ for every even permutation $(i,j,k)$ of $\{1,2,3\}$. This shows that for every $r,s,t\in\{1,2,3\}$ one has that $g([\beta_r, \beta_s], \beta_t)=-4\e(r,s,t)$ where $\e(r,s,t)$ is the signature of the permutation $(r,s,t)$ if the indices are mutually distinct, and $0$ otherwise.
From the definition of the matrix $A$ we then immediately obtain
$$
g([\gamma_1, \gamma_2], \gamma_3)  
= \sum_{r,s,t} a_{1r} a_{2s} a_{3t} \, g([\beta_r, \beta_s], \beta_t)
=-4 \det A \ .
$$
The conclusion follows from Lemma \ref{com}.
\qed

The space generated by $\gamma_1,\gamma_2,\gamma_3$ is a Lie subalgebra of $\su(2)$. We will distinguish three cases, according to the possible dimensions of this Lie algebra.

\subsection{The case $\gamma_1=\gamma_2=\gamma_3=0$}
In this case the torsion
form $\tau $ is a section of $\Lambda^3 \V$, so the geometry
with torsion is decomposable in the sense of \cite[Def. 3.1]{CMS21}. By \cite[Lemma 3.2]{CMS21}, the manifold $(M,g)$ is
locally isometric to a Riemannian product of two Riemannian manifolds $(M^3, g_3)$ and  $(M^4,g_4)$, and by \eqref{tau1}, $\tau$ can be identified with $a\, \vol^{g_3}$. 

The $\nt$-parallel vector fields $\xi_i$ on $M$ satisfy in particular $\n^g_X\xi_i=0$ for every $X\in \H$ so they are constant along $M^4$, and 
\be\label{nxi}\n^g_{\xi_i}\xi_j=-\tau_{\xi_i}\xi_j,\qquad\forall i,j\in\{1,2,3\}\ .\ee

The 2-forms $\beta_i$ defined in \eqref{phi1} satisfy
$\n^g_X\beta_i=0$ for every $X\in\T M$, so they are constant along $M^3$ and define a hyperkähler structure on $(M^4,g_4)$.
By \eqref{nxi}, the vector fields $\xi_i$ satisfy with respect to the Levi-Civita connection on $(M^3,g_3)$ the equations
$$
\nabla_{\xi_i} \xi_i = 0, \qquad \nabla_{\xi_i} \xi_j = a \xi_k, \qquad
 \nabla_{\xi_j} \xi_i = -a \xi_k \ 
$$
for every even permutation $(i,j,k)$ of $\{1,2,3\}$ (we drop the reference to $g_3$ in this paragraph and denote $\nabla^{g_3}$ simply by $\nabla$).
As a first consequence we have $d\xi_i = - 2a \xi_j \wedge \xi_k$. We also
have $\delta \xi_i = 0$, since $\xi_i$ are Killing vector fields. We claim that the manifold $(M^3,g_3)$
has constant sectional curvature. To see this, we first compute
$$
\nabla^* \nabla \xi_i = - \nabla_{\xi_j}\nabla_{\xi_j} \xi_i 
        - \nabla_{\xi_k}\nabla_{\xi_k} \xi_i = a\nabla_{\xi_j}\xi_k 
        - a \nabla_{\xi_k} \xi_j = 2a^2 \xi_i \ .
$$
Since $\xi_i$ are Killing vector fields we obtain
$$
\Ric(\xi_i) = \frac12 \Delta \xi_i = \nabla^* \nabla \xi_i = 2a^2 \xi_i  \ .
$$
Hence, the sectional curvature of $(M^3, g_3)$ is equal to $a^2$, i.e. $M_3$ is locally isometric to the sphere of radius $\frac1{|a|}$ for $a\neq 0$ and to $\R^3$ for $a=0$.

Conversely, let $(M,g) = (M^3,g_3) \times (M^4,g_4)$ the Riemannian product of an oriented manifold
$(M^3, g_3)$ of constant sectional curvature $a^2$ and a hyperk\"ahler manifold $(M^4,g_4)$. For $\tau := a\vol^{g_3}$, a straightforward computation shows that the connection $\nabla^\tau:=\nabla^{g_3}+\tau$ is flat, so there exists an oriented local orthonormal frame of $\nt$-parallel vector fields $\xi_i$ on $M^3$. For any $\nabla^{g_4}$-parallel frame $\{\beta_i\}$ of $\Lambda^+ \T M^4$ satisfying 
the anti-quaternionic relations, the 3-forms on $M$ defined by
\be\label{phi-tau3}\varphi: = \xi_1\wedge\xi_2\wedge\xi_3 +\sum_i \xi_i \wedge \beta_i,\qquad \tau := a\xi_1\wedge\xi_2\wedge\xi_3\ee
determine a $\G_2$-structure $\f$ and a connection $\nt$ with skew-symmetric torsion such that $\nt\tau=0$ and $\nt\f=0$.

\subsection{The case where $\gamma_1,\gamma_2,\gamma_3$ span a real line}\label{line} In this case one can write $\gamma_i = v_i \nu$, for some constants $v_i$ with $v_1^2+v_2^2+v_3^2=1$ and a non-zero $\nabla^\tau$-parallel 2-form $\nu\in \Omega^+(\H)$.
Let $v\in\R^3$ be the unit vector with components $v_i$ and let $B =(b_{ij}) \in \SO(3)$ be such that $Bv = e_1$. We define the unit $\nabla^\tau$-parallel vector fields
$\tilde \xi_i: = \sum_j b_{ij} \xi_j$ and the 2-forms $\tilde \gamma_i: = \sum_j b_{ij} \gamma_j = \sum_j (b_{ij} v_j)\nu$. Since $\det(A)=0$, Lemma \ref{det} gives $a=0$, so $\tau=\sum_j \xi_i \wedge \gamma_i  = \sum_j \tilde \xi_i \wedge \tilde \gamma_i$. Moreover, from
the definition of $B$ we have $\tilde\gamma_1=\nu$ and $\tilde\gamma_2=\tilde\gamma_3=0$, whence $\tau = \tilde\xi_1 \wedge \nu$.

The special form of the torsion $\tau $ in this case implies
$\nabla^g \tilde\xi_2 = \nabla^g \tilde\xi_3 = 0$. 
Hence, one can write locally $(M,g) = \R^2 \times (S,g^S)$ for some $5$-dimensional manifold $(S,g^S)$.
Moreover, since $\tilde\xi_1$ and $\nu$ are $\nt$-parallel, we obtain  for $i\in\{2,3\}$: $\nabla^g_{\tilde\xi_i}\tilde\xi_1=0$ and $\nabla^g_{\tilde\xi_i}\nu=-(\tau_{\tilde\xi_i})_*\nu=0$. Therefore
$\tilde \xi_1$ and $\nu$ define on $S$ a unit  Killing vector field and a $2$-form which will be denoted by $\xi$ and $\nu^S$. They are both parallel with respect to the connection $\nabla^{\tau^S}=\nabla^{g^S}+\xi\wedge\nu^S$ on $S$. 
For every tangent vector on $M$ we have
$$
\nabla^g_X \tilde\xi_1 = - \tau_X \tilde\xi_1 = \tau_{\tilde\xi_1}X  =\nu(X) \ ,
$$
and similarly
$$\nabla^g_X\nu  =-(\tau_X)_*\nu=\nu_*\tau_X=\nu_*((g(X,\tilde\xi_1)\nu-\tilde\xi_1\wedge\nu(X))=
-\tilde\xi_1\wedge\nu(\nu((X))\ .
$$
On the other hand $\nu\in\Omega^+(\H)$ is a self-dual form. Denoting by $\alpha:=|\nu|_{\Lambda^2\H}$ (which is non-zero since $\nu$ is non-zero) and by $\Phi:=\frac1\alpha\nu$, we then have $\nu\circ\nu=-\alpha^2\Id_\H$ and $\Phi\circ\Phi=-\Id_\H$. The two relations above now read on $S$:
\be\label{als}
\nabla^{g^S}_X \xi =\alpha\Phi(X),\qquad \nabla^{g^S}_X\Phi=\alpha\xi\wedge X\ .
\ee
Hence, $(\xi, \Phi)$ defines an $\alpha$-Sasaki structure on $(S,g^S)$ (cf. Definition \ref{a-sas}). By Lemma \ref{par} and Lemma \ref{Lie}, there is a local Riemannian fibration $\pi:(S,g^S)\to(K,g^K)$ with fibers tangent to $\xi$, and $\Phi$ descends to a Kähler structure $\omega^K$ on $(K,g^K)$.

Using the orthonormal basis $\{\tilde\xi_1,\tilde\xi_2,\tilde\xi_3\}$ of $\V$, we can write $\f=\tilde\xi_1\wedge\tilde\xi_2\wedge\tilde\xi_3+\sum_{i=1}^3\tilde\xi_i\wedge\tilde\beta_i$. Clearly $\tilde\beta_i$ are $\nt$-parallel, so in particular they define $\nabla^{\tau^S}$-parallel $2$-forms on $S$ spanning $\Lambda^+\H$ at every point. That shows that $R^{\tau^S}$ seen as symmetric endomorphism of $\Lambda^2\T S=\Lambda^2\H\oplus \xi\wedge \H$, vanishes on the last summand (as $\xi$ is parallel), and its image commutes with $\Lambda^+\H$, i.e. is contained in $\Lambda^-\H$. Consequently, $R^{\tau^S}(\beta)=0$ for every $\beta\in\Omega^+(\H)$. 

On the other hand, the general curvature relation \eqref{rpb} applied to the Riemannian submersion $(S,g^S)\to(K,g^K)$ reads:
\be\label{rr}R^{\tau^S}(\pi^*\beta)=\pi^*(R^{g^K}(\beta))+4\alpha^2 \la\beta,\omega^K\ra_{\Lambda^2\T K}\pi^*\omega^K\ .\ee
for every $\beta\in\Omega^2(K)$. We thus obtain that 
\be\label{rg}0=R^{g^K}(\beta)+4\alpha^2\la\beta,\omega^K\ra_{\Lambda^2\T K}\omega^K\qquad\forall\beta\in\Omega^+(K)\ .\ee
Taking the scalar product with $\omega^K$ in \eqref{rg} gives $R^{g^K}(\omega^K)=-8\alpha^2\omega^K$, thus showing as before that $(K,g^K)$ is Kähler-Einstein with positive scalar curvature $32\alpha^2$. Note that this is consistent with the computation in \cite[Thm. 7.4]{MS24}. 

Conversely, let $(K,g^K,\omega^K)$ be a 4-dimensional Kähler-Einstein manifold with positive scalar curvature $\scal^K$. We denote by $\a:=\sqrt{\frac{\scal^K}{32}}$ and let $\zeta$ be a primitive of $2\alpha\omega^K$ on some open set $K_0$. Consider the Riemannian metric on $S:=\R\times K_0$ given by 
$$g^S:=(dt+\pi^*\zeta)^2+\pi^*g^K\ ,$$
where $\pi$ is the projection of the second factor, and denote by $\xi$ the metric dual of $dt+\pi^*\zeta$ and by $\Phi$ the skew-symmetric endomorphism of $\T S$ corresponding to $\pi^*\omega^K$. Then
$(S, g^S, \xi, \Phi)$ is a $5$-dimensional $\alpha$-Sasaki manifold. Indeed, $\xi$ is Killing and satisfies $d\xi=d\pi^*\zeta=2\alpha\pi^*\omega^K$, thus showing that $\xi$ satisfies the first equation in \eqref{als}, i.e. it is parallel with respect to the connection $\nabla^{g^S}+\tau^S$, where $\tau^S:=\alpha\xi\wedge \Phi$. We also have $(\tau^S_\xi)_*\Phi=\alpha\Phi_*\Phi=0$ so by Lemma \ref{par}, $\Phi$ is $\nabla^{\tau^S}$-parallel, which immediately gives the second equation in \eqref{als}. 

The curvature operator $R^K$ maps $\Lambda^+\T K$ to itself since $(K,g^K,\omega^K)$ is Kähler-Einstein, vanishes on $\Lambda^{(2,0)+(0,2)}\T K$, and maps $\omega^K$ to $\frac14\scal^K\omega^K=-8\alpha^2\omega^K$. By \eqref{rr} we then obtain that $R^{\tau_S}$ vanishes on $\Lambda^+\H$, where $\H$ denotes the horizontal distribution $\H:=\widetilde{\T K}$. Using the pair symmetry of $R^{\tau^S}$ we thus obtain that the restriction of $\nabla^{\tau^S}$ to $\Lambda^+\H$ is flat, so one can find (locally defined) $\nabla^{\tau^S}$-parallel $2$-forms $\beta_1,\beta_2,\beta_3\in \Omega^+(\H)$ satisfying the anti-quaternionic relations $[\beta_i, \beta_j] = - 2\beta_k$
for every even permutation $(i,j,k)$ of $\{1,2,3\}$.

Define $(M,g): = \R^2 \times (S,g^S) $. Every tensor on $S$ extends in a canonical way to a tensor on $M$ constant along $\R^2$. Let $\xi_1$ be the extension to $M$ of the Sasaki vector field $\xi$, and let $\{\xi_2,\xi_3\}$ be an orthonormal
$\nabla^g$-parallel frame on the $\R^2$ factor. 
Define the $3$-form 
\be\label{tau5}\tau: = \alpha\xi_1 \wedge \Phi\in\Omega^3(M)\ .\ee 
Then $\nabla^\tau: = \nabla^{g} + \tau$ extends to $M$ the canonical connection of the $\alpha$-Sasaki structure on $S$. It follows that $\xi_1, \xi_2, \xi_3$ are $\nabla^\tau$-parallel vector fields and that $\Phi$ is a $\nabla^\tau$-parallel $2$-form. 

Then the 3-form
$\varphi $ defined by
\be\label{phi5}\varphi:=\xi_1 \wedge \xi_2 \wedge \xi_3 + \sum^3_{i=1} \xi_i \wedge \beta_i \ee
defines a $\G_2$-structure on $(M,g)$ by Lemma \ref{phicom}, which is $\nabla^\tau$-parallel since $\xi_i$ and $\beta_i$ are all $\nt$-parallel by construction.

\subsection{The case where $\gamma_1,\gamma_2,\gamma_3$ are linearly independent} In this last case, the constant $a$ in Lemma \ref{com} is non-zero, so the $2$-forms $\Phi^H_i := -\frac{2}{a} \gamma_i \in \Omega^+(\H)$ satisfy the 
commutator relations $[\Phi^H_i, \Phi^H_j ] = - 2 \Phi^H_k$ for every even permutation $(i,j,k)$ of $\{1,2,3\}$. By Lemma \ref{phicom} we obtain $(\Phi^H_i)^2=-\Id_\H$ for every $i\in\{1,2,3\}$ and $\Phi^H_i \circ \Phi^H_j =-\Phi^H_j \circ \Phi^H_i= - \Phi^H_k$  for every even permutation $(i,j,k)$ of $\{1,2,3\}$. In addition we define $\Phi_i := \Phi_i^H - \xi_j \wedge \xi_k$. It follows $\xi_k = - \Phi_i(\xi_j) = \Phi_j(\xi_i)$ and 
$ \Phi_i^2 = - \Id_\H + \xi_i \otimes \xi_i$. An easy calculation gives
$$
\Phi_k (X)  = - \Phi_i \circ \Phi_j (X) + g (\xi_i, X) \xi_j \ .
$$
Computing $d\xi_i$ by means of \eqref{dxi} we
obtain
$$
d\xi_i = 2 \xi_i \lrcorner \tau = 2a \xi_j \wedge \xi_k + 2 \gamma_i
=  2a \xi_j \wedge \xi_k - a \Phi^H_i
=  a \xi_j \wedge \xi_k - a \Phi_i \ .
$$
By Definition \ref{3-sas} (cf. also  \cite{AD20} or \cite[\S 2.11]{MS24}), the tuple $(\xi_i, \Phi_i)$ defines a $3\text{-}(\alpha, \delta)$-Sasaki structure, where
$\alpha$ and $\delta$ are determined by $a=2(\alpha-\delta)$ and $-a=2\alpha$.
Thus $\alpha = -\frac{a}{2}$ and $\delta = -a$, i.e. we are in the special 
case where $\delta = 2\alpha\ne 0$, the so-called {\em parallel} $3$-$(\alpha,\delta)$-Sasaki
manifolds (cf. \cite[Def. 2.3.2]{AD20}).

Conversely assume that $(M^7,g,\xi_i, \Phi_i),\ i\in\{1,2,3\}$,  is a parallel $3\text{-}(\alpha, \delta)$-Sasaki manifold, i.e. $\delta=2\alpha \neq 0$. Set $\gamma_i :=\alpha \Phi^H$  and define a $3$-form $\tau$ via 
\be\label{tau4}\tau := 4\alpha \xi_1 \wedge \xi_2 \wedge \xi_3 + \frac12 \sum^3_{i=1} \xi_i \wedge d\xi_i\.\ee 
Then $\nabla^\tau := \nabla^g + \tau $ is a metric connection with parallel 
skew-symmetric torsion $\tau$ (cf. \cite[Cor. 4.4.2]{AD20}). It follows that
the vector fields $\xi_i$ and the $2$-forms $\Phi^H_i$ are $\nabla^\tau$-parallel.
From the definition of a $3\text{-}(\alpha, \delta)$-Sasaki manifold it is clear
that the forms $\Phi_i^H$ have square norm $2$ and satisfy the anti-quaternionic 
relations $\Phi^H_i \circ \Phi^H_j = - \Phi^H_k$ for every even permutation
$(i,j,k)$ of $\{1,2,3\}$.

Take any matrix $B = (b_{ij}) \in \SO(3)$ and define $2$-forms $\beta_i$
by $\beta_i := \sum^3_{i=1} b_{ij} \Phi^H_j$. It follows that the forms $\beta_i$ are
again $\nabla^\tau$-parallel and $[\beta_i,\beta_j] = -2\beta_k$ for all even
permutations $(i,j,k)$ of $\{1,2,3\}$. By Lemma \ref{phicom}, the 3-form $\varphi$ defined by
\begin{equation}\label{phi4}
\varphi := \xi_1 \wedge \xi_2 \wedge \xi_3 + \sum^3_{i=1} \xi_i \wedge \beta_i \ .
\end{equation}
is a $\G_2$-structure compatible with the metric $g$ of $M$.

Since the $2$-forms $\Phi^H_i$ and the vector fields $\xi_i$ are $\nabla^\tau$-parallel, the $\G_2$-form $\varphi$ is $\nabla^\tau$-parallel as well. One can check that $\f$ is co-closed if and only if the matrix $B\in\SO(3)$ is symmetric (i.e. $B=I_3$ or is the matrix of an orthogonal symmetry with respect to a line in $\R^3$).

\section{The case $\dim (\Par (\nabla^\tau))=0$}\label{xxx}

In this last section we will assume that there are no non-zero $\nabla^\tau$-parallel vector fields on $M$, i.e. 
the $\hol$-representation has no trivial summand.
Since $\dim M$ is odd the holonomy algebra $\hol$ cannot be abelian in this case.
According to Friedrich \cite[\S 2]{F07}, $\hol$ is then isomorphic to one of the algebras in the following list:
$$\mathfrak{g}_2,\quad
\su_c(2), \quad  \u(1) \oplus \su_c(2),\quad  \so_{ir}(3)  \quad
\mbox{or}  \quad
\su(2) \oplus  \su_c(2) \ .
$$

In the generic case $\hol = \mathfrak{g}_2$, since $\G_2$ has only one trivial 1-dimensional summand in $\Lambda^3\R^7$, the torsion is proportional to the $\G_2$-form, i.e.
$\tau = \lambda \varphi$ for some real constant $\lambda$. Since 
$\varphi$ is $\nabla^\tau$-parallel we obtain
$$
\nabla^g_X\varphi = -(\tau_X)_* \varphi = - \lambda (\varphi_X)_*\varphi =
-\lambda \sum (e_i \lrcorner X \lrcorner \varphi) \wedge (e_i \lrcorner \varphi)
= - 3\lambda * \varphi \ ,
$$
e.g. using (2.13)  in \cite{AS12}. It follows that the $\G_2$-structure
is either torsion free, for $\lambda=0$, or nearly parallel $\G_2$, for
$\lambda \neq 0$. Both classes of manifolds provide examples for connections with parallel skew-symmetric torsion with holonomy contained in $\G_2$.

If the holonomy algebra is isomorphic to $\su_c(2)$
or $\u(1) \oplus \su_c(2)$, it follows from \cite{F07} that $\nabla^\tau$ has parallel curvature, so the manifold has to be a  naturally reductive locally homogeneous space.

In the case $\hol =  \so_{ir}(3)$ the $\G_2$-structure is automatically
nearly parallel $\G_2$, since the space of $\SO_{ir}(3)$-invariant 
elements in $\Lambda^3\R^7$ is 1-dimensional. Moreover, the curvature
tensor $R^\tau$ turns out to be $\SO_{ir}(3)$-invariant, so we are
in the naturally reductive locally homogeneous case. It can be shown that the manifold is locally isometric
to the Berger space $\SO(5)/\SO_{ir}(3)$ (see \cite[Prop. 4.10]{MS24},
or \cite[Thm. 8.1]{F07} in the simply connected case).

It remains to study the case $\hol = \su(2) \oplus  \su_c(2)$. In this case the tangent bundle of $M$ decomposes in a $\nt$-parallel orthogonal direct sum $\T M=\V\oplus \H$ of oriented sub-bundles with $\dim(\V)=3$ (the orientation of $\V$ is the one defined by the restriction of $\f$). Let $\{\xi_1,\xi_2,\xi_3\}$ be a local orthonormal basis of $\V$. Then the 3-form $\vol^\V: = \xi_1 \wedge \xi_2 \wedge \xi_3$ does not depend on the choice of the basis and is $\nt$-parallel. By \eqref{fb} one can write 
\be\label{phixy}\varphi = \xi_1 \wedge \xi_2 \wedge 
\xi_3 + \sum \xi_i \wedge \beta_i\ ,\ee
where $\beta_i\in\Omega^+(\H)$ satisfy the relations
$\beta_i\circ \beta_j  = - \beta_k$ for every even permutation 
$(i, j, k) $ of   $\{1,2,3\}$. Note that the vector fields $\xi_i$ and the 2-forms $\beta_i$ are only locally defined and are not
$\nabla^\tau$-parallel in general.

From \cite[\S 2.6]{F07}, the space of $\hol$-invariant 3-forms is generated by $\vol^\V$ and $\f$. We can therefore write the torsion form  as 
\be\label{tauxy}\tau = x \xi_1 \wedge \xi_2 \wedge 
\xi_3 + y \sum \xi_i \wedge \beta_i\ee
for some real constants $x,y$.

The key point here will be to find a new metric connection $\nabla^{\tilde\tau}$ with skew-symmetric torsion, preserving $\V$, and which is flat on $\V$. In this way we will be able to choose a $\nabla^{\tilde \tau}$-parallel basis $\{\xi_1, \xi_2, \xi_3\}$ of $\V$ and then express the properties of the $\G_2$-form in terms of it.

Our Ansatz will be to take $\tilde \tau := \tau + \lambda \vol^\V$ for some $\lambda\in\R$. For any choice of $\lambda$, the corresponding connection $\nabla^{\tilde\tau}$ is metric, has skew-symmetric torsion, and preserves $\V$. 

\begin{epr}
    For $\lambda=-2(x+2y)$, the curvature $R^{\tilde \tau}$ of $\nabla^{\tilde \tau}$ vanishes on $\V$.
\end{epr}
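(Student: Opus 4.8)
The plan is to compute the restriction of the curvature $R^{\tilde\tau}$ to $\V$ explicitly as a function of $\lambda$ (and of the constants $x,y$ from \eqref{tauxy}), and to check that it vanishes precisely when $\lambda=-2(x+2y)$.

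The first step compares $R^{\tilde\tau}$ with $R^{\tau}$. Since $\V$ is $\nabla^\tau$-parallel and oriented, the $3$-form $\nu:=\vol^\V$ is $\nabla^\tau$-parallel, hence so is $\tilde\tau=\tau+\lambda\nu$; writing $\nabla^{\tilde\tau}=\nabla^\tau+\lambda\nu$ and applying the general curvature formula for a connection obtained by adding an endomorphism-valued $1$-form, with base connection $\nabla^\tau$ (whose torsion is $2\tau$), the $\nabla^\tau$-derivative terms of $\nu$ drop out and one obtains
$$R^{\tilde\tau}(X,Y)=R^{\tau}(X,Y)+2\lambda\,\nu_{\tau_X Y}+\lambda^2[\nu_X,\nu_Y].$$
Both $R^\tau$ and $R^{\tilde\tau}$ preserve the splitting $\T M=\V\oplus\H$, so I would restrict this to $\V$. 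From \eqref{tauxy} and the elementary relations $\tau_{\xi_i}\xi_j=x\xi_k$ for cyclic $(i,j,k)$, $\tau_{\xi_i}U\in\H$ for $U\in\H$, and $\tau_UV=y\sum_m\beta_m(U,V)\xi_m$ for $U,V\in\H$, the two correction terms become completely explicit on vertical, mixed and horizontal pairs of arguments.

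The core step is to determine $R^\tau(\xi_m,\xi_n)|_\V$. Applied to $(\xi_m,\xi_n,U)$ with $U\in\H$ and projected onto $\H$, the first Bianchi identity for $\nabla^\tau$ — which, the torsion being parallel, reads $R^\tau(X,Y)Z+R^\tau(Y,Z)X+R^\tau(Z,X)Y=4\big(\tau_{\tau_X Y}Z+\tau_{\tau_Y Z}X+\tau_{\tau_Z X}Y\big)$ — gives, after inserting \eqref{tauxy} and using $[\beta_m,\beta_n]=-2\beta_p$,
$$R^\tau(\xi_m,\xi_n)\big|_\H=(4xy+8y^2)\,\beta_p\qquad\text{for cyclic }(m,n,p),$$
so this $\so(\H)$-component is self-dual. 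Now $R^\tau(\xi_m,\xi_n)\in\hol=\Lambda^-\H\oplus\su_c(2)$, where $\su_c(2)$ is the copy of $\so(3)$ inside $\so(\V)\oplus\Lambda^+\H$ that stabilizes $\varphi=\vol^\V+\sum\xi_i\wedge\beta_i$; a short computation with this stabilizer condition identifies the induced isomorphism $\so(\V)\xrightarrow{\ \sim\ }\Lambda^+\H$ as $\nu_{\xi_i}\mapsto-\tfrac12\beta_i$. Hence the self-dual part just computed forces
$$R^\tau(\xi_m,\xi_n)\big|_\V=-(8xy+16y^2)\,\nu_{\xi_p}\qquad\text{for cyclic }(m,n,p).$$
Using the pair symmetry of $R^\tau$ (valid for parallel skew torsion), $g(R^\tau(X,Y)\xi_m,\xi_n)=g(R^\tau(\xi_m,\xi_n)X,Y)$, one recovers $R^\tau(X,Y)|_\V$ for all $X,Y$: it vanishes on mixed pairs (as $R^\tau(\xi_m,\xi_n)$ maps $\V$ into $\V$), it equals the displayed expression on vertical pairs, and it equals $(4xy+8y^2)\sum_m\beta_m(U,V)\nu_{\xi_m}$ on horizontal pairs $U,V\in\H$.

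Finally I would substitute all of this into the identity of the first step restricted to $\V$. The vertical curvature then equals $\big[\lambda^2+2\lambda x-(8xy+16y^2)\big]\nu_{\xi_k}$ on a cyclic vertical pair $(\xi_i,\xi_j)$, equals $\big[(4xy+8y^2)+2\lambda y\big]\sum_m\beta_m(U,V)\nu_{\xi_m}$ on a horizontal pair, and vanishes on mixed pairs. The horizontal bracket vanishes iff $4xy+8y^2+2\lambda y=0$, i.e. (for $y\neq0$; the case $y=0$ being immediate) iff $\lambda=-2(x+2y)$, and for this value one has $\lambda^2+2\lambda x=8xy+16y^2$, so the vertical bracket vanishes as well. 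I expect the delicate point to be the core step: $R^\tau(\xi_m,\xi_n)|_\V$ cannot be read off directly, since the Levi-Civita pieces buried in $R^\tau$ are not individually under control; it is the combination of the Bianchi identity — which fixes the horizontal self-dual part — with the rigidity of $\hol=\su(2)\oplus\su_c(2)$ — the $\su_c(2)$-matching of $\so(\V)$ with $\Lambda^+\H$, sign included — that transports the result to the vertical part. Getting that sign ($-\tfrac12$) right is exactly what makes the vertical and horizontal equations consistent and singles out $\lambda=-2(x+2y)$.
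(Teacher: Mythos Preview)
Your proof is correct, and it takes a genuinely different route from the paper's in the decisive step. Both arguments begin with the Bianchi identity for $\nabla^\tau$ to obtain $R^\tau(X,Y,\xi_i,\xi_j)=4(xy+2y^2)\beta_k(X,Y)$ for horizontal $X,Y$ (equivalently, $R^\tau(\xi_m,\xi_n)\big|_{\H}=(4xy+8y^2)\beta_p$). From there the paper computes $R^{\tilde\tau}$ case by case via the definition of the connection, and for the vertical--vertical case imports Friedrich's formula $K=(x+4y)^2$ for the sectional curvature of the totally geodesic leaves of $\V$, concluding via $R^{\tilde\tau}_{V_1,V_2}=((x+\lambda)^2-K)\,V_1\wedge V_2$. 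You instead use the uniform comparison $R^{\tilde\tau}=R^\tau+2\lambda\,\nu_{\tau_\cdot\cdot}+\lambda^2[\nu_\cdot,\nu_\cdot]$ (valid because $\nu$ is $\nabla^\tau$-parallel and $T^{\nabla^\tau}=2\tau$) and then exploit the structural fact $R^\tau(\xi_m,\xi_n)\in\hol=\Lambda^-\H\oplus\su_c(2)$: since $\su_c(2)$ is the graph of $\so(\V)\to\Lambda^+\H$, $\nu_{\xi_i}\mapsto-\tfrac12\beta_i$, the self-dual horizontal part you computed forces $R^\tau(\xi_m,\xi_n)\big|_{\V}=-(8xy+16y^2)\nu_{\xi_p}$. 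Your approach is more self-contained (no external curvature input) and conceptually explains \emph{why} the horizontal and vertical conditions on $\lambda$ coincide: the $\su_c(2)$-matching makes them two faces of the same equation. The paper's computation, on the other hand, is more elementary in that it never needs to identify $\su_c(2)$ explicitly, at the cost of invoking an outside result. As a bonus, your argument recovers Friedrich's constant: $R^\tau(\xi_i,\xi_j)\big|_{\V}=(x^2-K)\,\xi_i\wedge\xi_j$ on the leaf, so $x^2-K=-(8xy+16y^2)$ gives $K=(x+4y)^2$.
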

\begin{proof}
For horizontal vectors $X, Y \in \H$ the Bianchi identity \cite[Eq. (3)]{CMS21} gives for every even permutation $(i,j,k)$ of $\{1,2,3\}$:
\begin{align*}
    R^\tau(X, Y, \xi_i, \xi_j)
    &=
    4 ( g(\tau_XY, \tau_{\xi_i}\xi_j)  + g(\tau_Y\xi_i, \tau_X \xi_j )
         + g(\tau_{\xi_i}X, \tau_Y \xi_j )) \\[.8ex]
         &=
         4\left( g(y \sum_{a=1}^3 \xi_a \beta_a(X,Y) ) , x \xi_k) + y^2 g(\beta_i(Y), \beta_j(X))
           - y^2 g(\beta_i(X), \beta_j(Y))\right)\\
         &=
         4(xy + 2y^2 ) \beta_k(X,Y)\ .
\end{align*}
Note that we cannot apply the Bianchi formula directly to $R^{\tilde \tau}$ since $\tilde\tau$ is not parallel with respect to $\nabla^{\tilde\tau}$. For computing the curvature $R^{\tilde \tau}$ we will first obtain formulas for the vertical part of commutators
of two horizontal vector fields $X,Y$:
$$
[X,Y]^\V  = (\nabla^g_XY - \nabla^g_YX)^\V =  -2 (\tau_XY)^\V = -2y \sum \xi_i \beta_i(X,Y)\ ,
$$
and similarly for a horizontal vector field $X$ and a vertical vector field $V$:
\be\label{xv}
[X,V]^\V = (\nabla^\tau_XV - \tau_XV - \nabla^\tau_VX + \tau_VX)^\V = \nabla^\tau_XV \ .
\ee
Note that $\nabla^{\tilde \tau}_X = \nabla^\tau_X$ holds for all horizontal
vectors $X$. Then we compute
\begin{align*}
g(R^{\tilde \tau}_{X,Y} \xi_i, \xi_j) &= 
   g(\nabla^{\tilde \tau}_X \nabla^{\tilde \tau}_Y \xi_i - \nabla^{\tilde \tau}_Y \nabla^{\tilde \tau}_X \xi_i - \nabla^{\tilde \tau}_{[X,Y]}\xi_i, \xi_j)\\
   &=
   g(R^\tau_{X,Y} \xi_i - \lambda \vol^\V_{[X,Y]} \xi_i, \xi_j)\\
   &=
   4(xy + 2y^2) \beta_k(X,Y) - \lambda g(\xi_k, [X,Y]) \\
   &=
   2y(2(x + 2y) + \lambda) \beta_k(X,Y)\ .
\end{align*}
We see that this expression vanishes for $\lambda = -2(x+2y)$. 
Next we compute using \eqref{xv}:
\begin{align*}
R^{\tilde \tau}_{X,V} \xi_i &=
   \nabla^\tau_X (\nabla^\tau_V \xi_i + \lambda \vol^\V_V \xi_i)
   -
   \nabla^\tau_V \nabla^\tau_X \xi_i - \lambda \vol^\V_V (\nabla^\tau_X\xi_i)
   -
   \nabla^\tau_{[X,V]} \xi_i - \lambda \vol^\V_{[X,V]}\xi_i \\
   &=
   R^\tau_{X,V}\xi_i + \lambda \vol^\V_{\nabla^\tau_XV}\xi_i - 
   \lambda \vol^\V_{[X,V]}\xi_i \\
   &=0\ .
\end{align*}

It remains to calculate the curvature $R^{\tilde \tau}$ on vertical 
vectors. The distribution $\V \subset \T M$
is totally geodesic and  the leaves have constant sectional curvature $K \ge 0$. The constant $K$ was computed in \cite{F07}. The torsion form $T=2\tau$ is written in loc. cit.
$$
2\tau = a \varphi + b \vol^\V  = (a+b) \vol^\V
+ a\sum \xi_i \wedge \beta_i
=2x \vol^\V + 2y \sum \xi_i \wedge \beta_i\ .
$$
By \eqref{tauxy}, the relation between $a,b$ and $x,y$ is $a+b=2x$ and $a=2y$. Then by \cite[Prop. 10.1]{F07} we have
$$
K=\frac14 (5a+b)^2 = (x + 4y)^2 \ .
$$
We can restrict the curvature calculation to the $3$-dimensional leaves of $\V$. The connection
$\nabla^{\tilde \tau}$ on $\V$ can be written as
$
\left. \nabla^{\tilde \tau} \right|_\V = \nabla^g_\V + (x+\lambda) \vol^\V
$.
A standard calculation then gives
$$
R^{\tilde \tau}_{V_1, V_2} = R^g_{V_1, V_2} + (x + \lambda)^2 V_1 \wedge V_2
= ((x + \lambda)^2 - K ) V_1 \wedge V_2
$$
If we take $\lambda = -2(x+2y)$ we get $x+\lambda = - x - 4y$, i.e.
$(x+\lambda)^2 = k$. We see that for this choice of $\lambda$ the curvature
of $\nabla^{\tilde \tau} $ on the bundle $\V$ vanishes.
\end{proof}

Since the bundle $\V$ is flat with respect to the connection $\nabla^{\tilde \tau}$,
we can choose a local orthonormal frame $\{\xi_1, \xi_2, \xi_3\}$ of  
$\nabla^{\tilde \tau}$-parallel sections of $\V$.  By Lemma \ref{xik}, the
vector fields $\xi_i$ are all Killing and their differentials can be computed as follows:
$$
d\xi_i =  2\tilde \tau_{\xi_i} = 2(x+\lambda) \xi_j \wedge \xi_k  + 2y \beta_i
= -2(x+4y)  \xi_j \wedge \xi_k + 2y \beta_i \ .
$$
From the last equation we see that $(\xi_i, \beta_i)$ defines a
$3$-$(\alpha, \delta)$-Sasaki structure as in Definition \ref{3-sas}, with $\alpha: = y$,  
$\delta: = x + 4y$ and $\Phi^H_i:=\beta_i$.

Note that if $\delta = 2\alpha$, then $\lambda = -2(x+2y)=0$. Indeed 
for these particular $3$-$(\alpha, \delta)$-Sasaki structures, the so-called
parallel $3$-$(\alpha, \delta)$-Sasaki structures, the vector fields
$\xi_i$  are $\nabla^\tau$ parallel and it is not necessary
to modify the torsion form $\tau$, i.e. we can take $\lambda = 0$.

Conversely, we have the following statement (see \cite[Rem. 4.4.3]{AD20}
and \cite[Thm.4.5.1]{AD20}).

 \begin{epr}\label{ad-thm}
  For any  $3$-$(\alpha, \delta)$-Sasaki structure $(\xi_i, \Phi^H_i)$ on a 
  $7$-dimensional Riemannian manifold $(M,g)$ there exists a canonical connection $\nabla^\tau$ with parallel skew-symmetric torsion
  $\tau$ defined as
 \be\label{tor-ad}
\tau :=  (\delta - 4\alpha) \vol^\V  + \alpha \sum \xi_i \wedge \Phi^H_i \ .
 \ee   
 Moreover, there is  an associated cocalibrated and $\nabla^\tau$-parallel $\G_2$-structure $ \varphi$ defined by
 $$
 \varphi := \vol^\V + \sum \xi_i \wedge \Phi^H_i \ .
 $$
 \end{epr}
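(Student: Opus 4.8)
The plan is to obtain the $\G_2$-structure from Lemma \ref{phicom}, to read the parallelism of $\tau$ and $\varphi$ off the results of Agricola--Dileo (supplemented, for a self-contained argument, by a short direct computation), and to deduce co-closedness from the identity already established at the end of Section \ref{s5}. Throughout, write $\V:=\mathrm{span}(\xi_1,\xi_2,\xi_3)$ and $\H:=\V^\perp$.

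First I would check that $\varphi:=\vol^\V+\sum_i\xi_i\wedge\Phi^H_i$ is a $\G_2$-structure compatible with $g$. From the defining relations of a $3$-$(\alpha,\delta)$-Sasaki structure (Definition \ref{3-sas}; see also \cite[\S2.11]{MS24}), restricting to $X\in\H$, one gets that each $\Phi^H_i$ preserves $\H$, that $(\Phi^H_i)^2=-\Id$ on $\H$, and that $\Phi^H_i\circ\Phi^H_j=-\Phi^H_k$ on $\H$ for every even permutation $(i,j,k)$ of $\{1,2,3\}$; hence $[\Phi^H_i,\Phi^H_j]=-2\Phi^H_k$. Since the three K\"ahler forms of a quaternionic triple on an oriented $4$-plane are simultaneously self-dual for exactly one choice of orientation, I would orient $\H$ so that $\Phi^H_1,\Phi^H_2,\Phi^H_3\in\Lambda^+\H$; then Lemma \ref{phicom}, applied fiberwise to $\V\oplus\H$, shows that the $3$-form \eqref{fb} --- which is precisely $\varphi$ --- is a $\G_2$-structure compatible with $g$, the orientation of $M$ being the one induced by $\vol^\V$ and the chosen orientation of $\H$.

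Next I would treat the parallelism. With $\tau$ as in \eqref{tor-ad}, the connection $\nabla^\tau:=\nabla^g+\tau$ is metric with skew-symmetric torsion $T^{\nabla^\tau}=2\tau$ by construction. That $\nabla^\tau\tau=0$, and that $\nabla^\tau$ preserves the splitting $\T M=\V\oplus\H$ as well as the tensors $\vol^\V$ and $\sum_i\xi_i\wedge\Phi^H_i$ --- so that $\nabla^\tau\varphi=0$ --- is the content of \cite[Cor.~4.4.2, Thm.~4.5.1]{AD20} (see also \cite[\S2.11]{MS24}), which I would invoke directly. For a self-contained proof one uses instead $d\xi^\flat_i=2\alpha\Phi^H_i-2\delta\,\xi_j\wedge\xi_k$ and the first identity of Definition \ref{3-sas} to compute $\nabla^g\xi_i=\frac12 d\xi^\flat_i$ (as $2$-forms) and $\tau_X\xi_i$, and then checks that $\nabla^\tau$ rotates the frames $(\xi_i)$ and $(\Phi^H_i)$ by one and the same $\so(3)$-valued connection form; the $\nabla^\tau$-parallelism of $\vol^\V$, of $\sum_i\xi_i\wedge\Phi^H_i$, and hence of $\varphi$, follows, and $\nabla^\tau\tau=0$ drops out along the way.

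Finally, $\varphi$ is precisely the $\G_2$-form \eqref{phi4} for the symmetric matrix $B=I_3\in\SO(3)$, so the computation at the end of Section \ref{s5} gives $\delta^g\varphi=0$; equivalently, from $\nabla^\tau(*\varphi)=0$ and Lemma \ref{dbeta} one has $d(*\varphi)=2\sum_i(e_i\lrcorner\tau)\wedge(e_i\lrcorner *\varphi)$, which vanishes after substituting the explicit expressions for $\tau$ and for $*\varphi=\vol^\H+\sum_i\xi_j\wedge\xi_k\wedge\Phi^H_i$. The one step I expect to require real care is the bookkeeping: matching the orientation, sign and normalization conventions of \cite{AD20} with those used here, and in particular confirming that $\alpha,\delta$ in \eqref{tor-ad} are exactly those of Definition \ref{3-sas}. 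Once the conventions are aligned, each identity above is a routine verification.
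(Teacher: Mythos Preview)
Your proposal is correct and matches the paper's own treatment: the paper does not supply a proof of this proposition but simply cites \cite[Rem.~4.4.3, Thm.~4.5.1]{AD20} for the existence and parallelism claims, exactly as you do, and your use of Lemma~\ref{phicom} to confirm that $\varphi$ is a $\G_2$-structure is the natural complement. One small caveat: the co-closedness remark at the end of Section~\ref{s5} is stated only in the parallel case $\delta=2\alpha$, so your alternative direct verification via Lemma~\ref{dbeta} is the cleaner route for general $(\alpha,\delta)$.
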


The torsion form $\tau$ in Proposition \ref{ad-thm} coincides with the one defined in \eqref{tauxy} for $\beta_i=\Phi^H_i$, $y=\alpha$, and $x=\delta-4\alpha$. The $\G_2$-form
$\varphi$ is the $\G_2$-form defined in \eqref{phixy} for $\Phi^H_i=\beta_i$.

\labelsep .5cm


\begin{thebibliography}{22}

\bibliographystyle{alpha}

\bibitem{AD20}
I. Agricola, G. Dileo. 
{\sl Generalizations of 3-Sasakian manifolds and skew torsion}.
Adv. Geom. {\bf 20}, no. 3, 331--374 (2020). 

\bibitem{AS12}
B. Alexandrov, U. Semmelmann.
{\sl Deformations of nearly parallel $\G_2$-structures}.
Asian J. Math. {\bf 16}, no. 4, 713--744 (2012). 

\bibitem{BM01}
F. Belgun, A. Moroianu.
{\sl Nearly Kähler manifolds with reduced holonomy}.
Ann. Global Anal. Geom. {\bf 19}, 307--319 (2001).

\bibitem{B87} R. Bryant.
{\sl Metrics with exceptional holonomy}. 
Ann. of Math. (2) {\bf 126}, 525--576 (1987).

\bibitem{B06} R. Bryant.
{\sl Some remarks on $\G_2$-structures}. 
Proceedings of Gökova Geometry-Topology Conference 2005, 75–-109, Gökova Geometry/Topology Conference (GGT), Gökova, 2006.

\bibitem{CGT22}
A. Clarke, M. Garcia-Fernandez, C. Tipler.
{\sl T-dual solutions and infinitesimal moduli of the $\G2$-Strominger system}.
Adv. Theor. Math. Phys. {\bf 26}, no. 6, 1669--1704  (2022).

\bibitem{CMS21}
R. Cleyton, A. Moroianu, U. Semmelmann. 
{\sl Metric connections with parallel skew-symmetric torsion}.
Adv. Math. {\bf 378}, Paper No. 107519, 50 pp (2021). 

\bibitem{CS04}
R. Cleyton, A. Swann.
{\sl Einstein metrics via intrinsic or parallel torsion}.
Math. Z. {\bf 247}, no. 3, 513--528 (2004).

\bibitem{FG82} M. Fernández, A. Gray. 
{\sl Riemannian manifolds with structure group $\G_2$}. 
Annali di Matematica pura ed applicata {\bf 132}, 19--45 (1982).

\bibitem{FF25}
A. Fino, U. Fowdar.
{\sl Some remarks on strong $\G_2$-structures with torsion}.
arXiv:2502.06066 (2025).


\bibitem{FKMS97}
Th. Friedrich, I. Kath, A. Moroianu, U. Semmelmann.
{\sl On nearly parallel $\G_2$-structures}.
J. Geom. Phys. {\bf 23}, no. 3-4, 259--286 (1997).

\bibitem{FI02}
Th. Friedrich, S. Ivanov.
{\sl Parallel spinors and connections with skew-symmetric torsion in string theory}.
Asian J. Math. {\bf 6}, no. 2, 303--335 (2002).


\bibitem{F07}
Th. Friedrich.
{\sl $\G_2$-manifolds with parallel characteristic torsion}.
Differential Geom. Appl. {\bf 25}, no. 6, 632--648 (2007). 

\bibitem{IPU25}
S. Ivanov, A. Petkov, L. Ugarte.
{\sl Parallel torsion and $\G_2, \mathrm{Spin}(7)$ instantons}.
arXiv:2509.10623  (2025)


\bibitem{MS24}
A. Moroianu, P. Schwahn.
{\sl Submersion constructions for geometries with parallel skew torsion}. 
arxiv:2409.14421 (2024).


\bibitem{OLMS20}
X. de la Ossa, M. Larfors, M.  Magill, E. Svanes.
{\sl Superpotential of three dimensional $N=1$ heterotic supergravity}.
J. High Energy Phys. {\bf 195}, no. 1, 26 pp. (2020).

\bibitem{PR23}
F. Podestà, A. Raffero.
{\sl Bismut Ricci flat manifolds with symmetries}.
Proc. Roy. Soc. Edinburgh Sect. A {\bf 153}, no. 4, 1371--1390  (2023).


\bibitem{R93} R.~Reyes-Carrión.
\emph{Some special geometries defined by Lie groups}. 
PhD Thesis, Oxford, 1993.

\bibitem{S20}
R. Storm.
{\sl The classification of $7$- and $8$-dimensional naturally reductive spaces}.
Canad. J. Math. {\bf 72}, no. 5, 1246--1274  (2020). 


\bibitem{S86} 
A. Strominger. 
{\sl Superstrings with torsion}.
Nucl. Phys. B {\bf 274}, 253--284 (1986).



\end{thebibliography}
\end{document}